\documentclass[11pt,reqno,a4paper]{amsart}
\usepackage{amsmath,amsfonts,amssymb,amsthm,comment}
\usepackage[abbrev,lite,nobysame]{amsrefs}
\usepackage{mathrsfs}
\usepackage{graphics,graphicx}
\usepackage[usenames,dvipsnames]{color}
\usepackage{times}
\usepackage{bbm}
\usepackage{enumerate,enumitem}
\usepackage{mathtools}
\usepackage[margin=1in]{geometry}
\usepackage[colorlinks=true, pdfstartview=FitV, linkcolor=blue, citecolor=blue, urlcolor=blue]{hyperref}

\makeatletter

\renewcommand\subsection{\@startsection{subsection}{2}%
\normalparindent{.5\linespacing\@plus.7\linespacing}{-.5em}
{\normalfont\bfseries}}
\renewcommand\subsubsection{\@startsection{subsubsection}{3}%
\normalparindent{.5\linespacing\@plus.7\linespacing}{-.5em}
{\normalfont\bfseries}}

\newcommand{\diampar}[1]{\vspace{.5em}\noindent $\diamond$ \normalfont {\itshape #1.}}

\def\@tocline#1#2#3#4#5#6#7{\relax
  \ifnum #1>\c@tocdepth 
  \else
    \par \addpenalty\@secpenalty\addvspace{#2}%
    \begingroup \hyphenpenalty\@M
    \@ifempty{#4}{%
      \@tempdima\csname r@tocindent\number#1\endcsname\relax
    }{%
      \@tempdima#4\relax
    }%
    \parindent\z@ \leftskip#3\relax \advance\leftskip\@tempdima\relax
    \rightskip\@pnumwidth plus4em \parfillskip-\@pnumwidth
    #5\leavevmode\hskip-\@tempdima
      \ifcase #1
       \or\or \hskip 1em \or \hskip 2em \else \hskip 3em \fi%
      #6\nobreak\relax
    \dotfill\hbox to\@pnumwidth{\@tocpagenum{#7}}\par
    \nobreak
    \endgroup
  \fi}
\makeatother

\newtheorem{proposition}{Proposition}[section]
\newtheorem{lemma}[proposition]{Lemma}
\newtheorem{theorem}[proposition]{Theorem}
\newtheorem{corollary}[proposition]{Corollary}

\theoremstyle{definition}

\newtheorem{remark}[proposition]{Remark}

\newcommand\eps{\varepsilon}
\newcommand\e{{\rm e}}
\renewcommand{\d}{\,\mathrm{d}}
\newcommand{\p}{\rho}
\renewcommand{\th}{d}
\newcommand{\pv}{\p_\eps}
\newcommand{\uv}{u_\eps}

\newcommand{\bpv}{\bar{\p}_{\eps}}
\newcommand{\buv}{\bar{u}_{\eps}}

\newcommand{\huv}{\hat{u}_\eps}
\newcommand{\intx}{\int_0^1}
\newcommand{\intt}{\int_0^t}
\newcommand{\intT}{\int_0^T}
\newcommand{\ddt}{\frac{\d}{\d t}}

\newcommand{\dis}{\displaystyle}
\newcommand{\ex}{\mathbb{E}}
\newcommand{\bu}{\bar{u}}

\newcommand{\bp}{\bar{\p}}

\newcommand{\cE}{\mathcal{E}}

\newcommand{\supt}{\sup_{0 \leq t \leq T}}

\newcommand{\tp}{\rho}
\newcommand{\tu}{u}
\newcommand{\rv}{r_\eps}
\newcommand{\vv}{v_\eps}

\newcommand{\bXv}{\bar{\mathcal{X}}_\eps}
\newcommand{\X}{\mathcal{X}}
\newcommand{\bmv}{\bar{\mu}_\eps}
\newcommand{\bX}{\bar{\mathcal{X}}}
\numberwithin{equation}{section}

\begin{document}
\title[Vanishing Mach Number Limit of Stochastic Compressible Flows]{Vanishing Mach Number Limit of \\ Stochastic
Compressible Flows}

\author[G.-Q. Chen]{Gui-Qiang G. Chen }
\address{Mathematical Institute, University of Oxford, Oxford OX2 6GG, UK}
\email{gui-qiang.chen@maths.ox.ac.uk}

\author[M. Coti Zelati]{Michele Coti Zelati}
\address{Department of Mathematics, Imperial College London, London, SW7 2AZ, UK}
\email{m.coti-zelati@imperial.ac.uk}

\author[C.~C. Yeung]{Chin Ching Yeung}
\address{Mathematical Institute, University of Oxford, Oxford OX2 6GG, UK}
\email{yeung.chinching@maths.ox.ac.uk}

\subjclass[2020]{35R60, 76N06, 37L40, 	35B25}

\keywords{Stochastic compressible Navier-Stokes equations, zero-Mach limit, acoustic systems, invariant measures}

\begin{abstract}
We study the vanishing Mach number limit for the stochastic Navier-Stokes equations
with $\gamma$-type pressure laws,
with focus on the one-dimensional case.
We prove that, if the stochastic term vanishes with respect to the Mach number sufficiently fast, the deviation
from the incompressible state of the solutions (for $\gamma \geq 1$) and the invariant measures (for $\gamma = 1$)
is governed by a linear stochastic acoustic system in the limit.
In particular, the critically sufficient decay rate for the stochastic term is slower than the corresponding
results with deterministic external forcing due to the martingale structure of the noise term,
and the blow-up of the noise term for the fluctuation system can be allowed.
\end{abstract}
\maketitle

\setcounter{tocdepth}{2}
\tableofcontents

\section{Introduction}

We are concerned with the vanishing Mach number limit of the stochastic Navier-Stokes
equations with $\gamma$-type pressure laws:
\begin{equation}\tag{NS}\label{eq:NS}
\begin{cases}
\dis\d\p+(\p u)_x \d t = 0,\\[1mm]
\dis\d(\p u) + (\p u^2 + \frac{1}{\eps^2}\p^\gamma)_x \d t = u_{xx} \d t +\p\sigma_\eps \d W_t,
\end{cases}
\end{equation}
where $\p=\p(t, x)$ and $u=u(t,x)$ represent the density and velocity of the fluid
in the position $x\in(0,1)$ at time $t \geq 0$, respectively.
The system is complemented with homogeneous Dirichlet boundary conditions:
\begin{align}\label{BC-0}
u(t, 0) = u(t, 1) = 0  \qquad\mbox{ for all $t >0$},
\end{align}
and initial data:
\begin{align}\label{ID-0}
\p(0, x) = \p_0(x), \quad u(0, x) = u_0(x)  \qquad\text{ for all $x\in (0,1)$}.
\end{align}
The conservation of mass guaranteed by the continuity equation in system \eqref{eq:NS} allows to fix,
without loss of generality, the initial mass to $1$, implying that
\begin{equation}
\intx \p(x, t) \d x = 1  \qquad\text{ for all $t >0$}.
\end{equation}
The constant $\eps>0$ represents the Mach number of the flow, which is (proportional to) the ratio of the typical flow
speed and the speed of sound propagating in the flow.
Without forcing, in the limit $\eps \to 0$, the fluid behaves as an incompressible one.
In the one-dimensional case, we expect the velocity to vanish due to the boundary conditions,
and the density to approach a constant because of the continuity equation.
In the presence of forcing, this necessarily imposes
the coefficient $\sigma_\eps$ to vanish as $\eps\to 0$: system \eqref{eq:NS} is therefore forced
by a collection of independent white noise processes, suitably colored in space
in an $\eps$-dependent fashion through the coefficient $\sigma_\eps=\eps^\beta \sigma$,
with $\beta>0$ and $\sigma$ fixed and independent of $\eps$.

The goal of this paper is to analyze the higher-order fluctuations in the vanishing Mach number limit.
For this purpose, we asymptotically expand the solutions
in the form\footnote{The first-order term of the density $\p$ vanishes by the momentum equation.}:
\begin{equation}\label{eq:devvar}
\begin{cases}
\rho = 1 + \eps^2 \rho_2 + \eps^3 \rho_3 + \cdots = 1+\eps^2 \bar{\rho}_\eps, \\[1mm]
u = \eps u_1+\eps^2 u_2+ \cdots = \eps \bar{u}_\eps.
\end{cases}
\end{equation}
We can now rewrite the system in the new variables $(\bpv, \buv)$ as the following \emph{fluctuation system}:
\begin{equation}\tag{FS}\label{eq:nonlinear}
			\begin{cases}
			\dis
			\d\bpv+\big(\eps (\bpv)_x \buv + \frac{1}{\eps}(1+\eps^2 \bpv) (\buv)_x \big)\d t = 0,\\[1.5mm]
			\dis\d\buv + \big(\eps \buv (\buv)_x + \frac{\gamma}{\eps} (1+\eps^2 \bpv)^{\gamma-2}(\bpv)_x \big)\d t
              = \frac{(\buv)_{xx}}{1+\eps^2 \bp} \d t + 	\eps^{\beta-1} \sigma \d W_t,
			\end{cases}
\end{equation}
with initial data:
\begin{equation}\label{ID-1}
\bp(0, x) = \bar{\p}_{0, \eps}(x), \quad \bu(0, x) = \bar{u}_{0,\eps}(x)  \qquad\text{ for all $x\in (0,1)$},
\end{equation}
and boundary conditions:
\begin{equation}\label{BC-1}
\bu(t, 0) = \bu(t, 1) = 0  \qquad\text{ for all $t >0$}.
\end{equation}
As $\eps \to 0$,  system \eqref{eq:nonlinear} is formally well-approximated by the following linear acoustic system:
\begin{equation}\tag{AS}\label{linear}
			\begin{cases}
				\dis\d r_\eps + \frac{1}{\eps}(v_\eps)_x \d t = 0,\\[2.5mm]
				\dis\d v_\eps + \frac{\gamma}{\eps}(r_\eps)_x \d t = (v_\eps)_{xx} \d t+\eps^{\beta-1} \sigma \d W_t,
			\end{cases}
\end{equation}
equipped with initial data:
\begin{equation}\label{ID-2}
r_\eps(0, x) = r_{0,\eps}(x), \quad v_\eps(0, x) = v_{0,\eps}(x)  \qquad\text{ for all $x\in (0,1)$},
\end{equation}
and boundary conditions:
\begin{equation}\label{BC-2}
v_\eps(t, 0) = v_\eps(t, 1) = 0  \qquad\text{ for all } t >0.
\end{equation}
The acoustic system \eqref{linear} is formally a stochastically forced linear damped wave equation:
\begin{equation*}
\d(r_\eps)_{t}-\frac{1}{\eps^2}(r_\eps)_{xx} \d t = (r_\eps)_{txx} \d t + \eps^{\beta-2}\sigma_x \d W_t.
\end{equation*}
The acoustic system \eqref{linear} is mathematically easier to study, compared to the original nonlinear system
\eqref{eq:nonlinear}, and it reveals the fine wave-like structure of the system in the vanishing Mach number limit.
 Justifying this limit is the main goal of this paper.

\subsection{Stochastic Navier-Stokes equations}
$\,$ The mathematical literature concerning the stochastic \\ Navier-Stokes equations is vast,
which has seen tremendous growth in the last decades.
We limit ourselves to recording a few relevant results concerning the existence and long-time behavior of solutions.
In the two-dimensional incompressible setting, the existence and uniqueness
of solutions have been established in various instances (different domains, boundary conditions,
regularity of initial data, and structure of the noise term);
see \cites{kuksin2012mathematics,da1996ergodicity}
and the references therein.
Moreover, the ergodicity and uniqueness of an invariant measure for the corresponding Markov semigroup
are also by now well-established under minimal requirements on the noise structure \cites{hairer2006ergodicity,flandoli1995ergodicity}.
The picture in the three-dimensional case is complicated by the well-known well-posedness issues.
The existence of global weak solutions is known (see \cite{Flandoli08} and the references therein),
and it has also been shown that there exists a strong Feller Markov selection with a unique invariant
measure \cites{DaPrato03, FlandoliRomito08}.

In comparison, the theory for compressible fluids is much less developed.
Most of the results concern the barotropic cases.
The existence of solutions has been obtained by using a
pathwise approach in \cite{1d_98} in the one-dimensional case,
in \cite{tornatore2000global} for a special two-dimensional case, and in \cite{feireisl2013compressible}
for the three-dimensional case.
Furthermore, the existence of global weak solutions with more general noise terms
and adiabatic exponent $\gamma>\frac{3}{2}$ in the three-dimensional setting has
been obtained in \cites{breit2016stochastic, smith2017random,wang2015global} independently
by using a several-layer approximation scheme, similar to the one used in the deterministic
case in \cite{feireisl01}.
In addition, the existence and uniqueness of local-in-time (up to a positive stopping time)
pathwise strong solutions have been obtained in  \cite{breit2018local}
for the barotropic Navier-Stokes equations with multiplicative noise on a torus.
Besides, the incompressible limit of the stochastic isotropic Navier-Stokes has been studied
in \cite{breit2016incompressible}.

There are only a few results available in the study of invariant measures and stationary solutions.
In \cite{gas2013mesure}, the existence of invariant measures has been proved for an approximated
version of the one-dimensional stochastic compressible Navier-Stokes equations in Lagrangian coordinates,
while \cite{michele} provides the construction of an invariant measure for system \eqref{eq:NS}
for the ideal gas pressure law, with $\gamma=1$.
In the three-dimensional case, stationary solutions to the isentropic Navier-Stokes equations
have been constructed in \cite{breit2019stationary}.
However, due to the uniqueness issues, stationarity is not intended in terms of an invariant measure
for a related Markov semigroup.  In this regard, a Markov selection
for the three-dimensional system has been constructed in \cite{markov_selection_3d_comp}.

\subsection{Vanishing Mach number limit}
Extensive research has been dedicated to the investigation of the low Mach number limit within the realm
of smooth solutions for the deterministic compressible Euler system,
even extending to multidimensional scenarios.
This rich body of work includes seminal contributions by Klainerman-Majda \cite{A1}, Majda \cite{A2},
Metivier-Schochet \cite{A3}, Schochet \cite{A4}, and a plethora of related references.

Chen-Christoforou-Zhang in \cite{A5} provided a rigorous mathematical foundation for the zero Mach limit,
specifically focusing on global discontinuous solutions, characterized by bounded variation,
of the compressible Euler equations. Their results illustrate the behavior of second-order coefficients
in the asymptotic expansion concerning the Mach number, highlighting the relevance of the linear acoustic
system within the Euler framework. In the context of the stochastic compressible Navier-Stokes equations,
the present study builds upon this framework, further developing and extending it.
Our work entails the derivation of the corresponding stochastic linear acoustic system,
as described in \eqref{linear}, and the subsequent validation of the vanishing Mach limit.

For additional insights into the low Mach number limit for global weak solutions pertaining
to the isentropic Navier-Stokes equations, we
refer to the work of Lions-Masmoudi \cite{VML5} for the deterministic setting
and to the work of Breit-Feireisl-Hofmanova \cite{breit2016incompressible} for the stochastic case.

\subsection{Main results}
We fix a filtered probability space $(\Omega, \Sigma, \mathbb{P}, (\mathcal{F}_t)_{t \geq 0})$
satisfying the usual conditions such that it
supports a sequence of independent real-valued Wiener processes $(W^k)_{k = 1 }^\infty$.
Let $\sigma = (\sigma_k)_{k=1}^\infty \subset H^1_0(0, 1) \cap H^2(0, 1)$ such that
\begin{equation} \label{eq:sigma_sum_cond}
\|\sigma_{xx}\|_{L^2}^2:=\sum_{k = 1}^\infty \intx |(\sigma_k)_{xx}|^2 \d x < \infty .
\end{equation}
This regularity assumption is tacitly assumed throughout this paper.
The noise term is defined as in the time-integrated sense as
\begin{align*}
\int_0^t \eps^{\beta} \p \sigma \d W_s :=  \sum_{k = 1}^\infty \int_0^t\eps^{\beta}\p \sigma_k \d W^k_s,
\end{align*}
where integrals are understood in the It\^{o}'s sense.

The basic energy functional for the compressible Navier-Stokes equations \eqref{eq:NS} can be written as
\begin{align} \label{eq:def_modified energy}
\cE(\p, u) =\intx \Big(\frac{1}{2}\p u^2 + H(\p) + \frac{1}{2}\frac{\p_x u}{\p} + \frac{1}{4} \frac{\p_x^2}{\p^3} \Big)\d x,
\end{align}
where $H(\p)$ is the pressure potential given by
\begin{align}\label{eq:def_H_p}
H(\p)=   \frac{1}{\eps^2}
\begin{cases}
\p\log \p - \p +1 &\text{ if } \gamma =1,\\[1mm]
 \frac{1}{\gamma-1}(\p^\gamma-\gamma(\p-1)) &\text{ if } \gamma >1.
\end{cases}
\end{align}
The energy structure revealed by $\cE$ is crucial in our analysis.
The first main result is a finite-time convergence of solutions of system \eqref{eq:nonlinear} to those of system \eqref{linear},
in the whole range of pressure exponents  $\gamma \geq 1$.

\begin{theorem} \label{thm:finite_time_convergence}
Let $\beta>\frac{1}{2}$ and $T>0$. Assume $(\bp_{0,\eps}, \bu_{0,\eps})_{\eps \in (0, 1]}$
and $(r_{0, \eps}, v_{0, \eps})_{\eps \in (0, 1]}$ are collections of $\mathcal{F}_0$-measurable
random variables in $H^1(0, 1) \times H^1_0(0, 1)$ such that
$$
\intx \bp_{0, \eps} \d x= 0, \quad 1+\eps^2 \bp_{0, \eps}>0,  \quad \intx r_{0, \eps} \d x= 0 \qquad\,\,\,\,\mbox{almost surely}.
$$
Furthermore, assume that there exist $\alpha>1$ and $\lambda>0$ such that
\begin{equation} \label{eq:assumption_moment_initial_poly}
\ex \left[ \cE(1+\eps^2 \bp_{0, \eps} , \eps \bu_{0, \eps})^p \right]
\lesssim  p! \, \lambda^p \eps^{\alpha p} \qquad \text{ for any } p\geq 1 \text{ and } \ \eps \in (0, 1].
\end{equation}
Then there exists $\eps_0 \in (0, 1]$  such that the corresponding solutions
$(\bp_{\eps}, \bu_{\eps})$ of system \eqref{eq:nonlinear}  and $(r_{\eps}, v_{\eps})$
of system \eqref{linear} satisfy the convergence estimate{\rm :}
\begin{equation}
 \ex \Big[\supt \big(\gamma\|\bpv-\rv\|_{L^2}^2+\|\buv-\vv\|_{L^2}^2\big)\Big]
 \lesssim \Big(\ex  \big[\gamma\|\bp_{0, \eps}-r_{0, \eps}\|_{L^2}^4 + \|\bu_{0, \eps}-v_{0, \eps}\|_{L^2}^4\big]\Big)^{\frac{1}{2}}
 +\eps^{\alpha-1} + \eps^{2\beta-1} \label{eq:theorem_1_L^2_diff_exp}
\end{equation}
for all $\eps\in (0,\eps_0]$.
\end{theorem}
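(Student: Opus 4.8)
The plan is to work directly with the difference $(R_\eps,V_\eps):=(\bpv-\rv,\;\buv-\vv)$ and to exploit the structural fact that systems \eqref{eq:nonlinear} and \eqref{linear} are driven by the \emph{same} stochastic term $\eps^{\beta-1}\sigma\,\d W_t$, so that the noise cancels in the difference. Subtracting the two systems, and isolating the acoustic part through the expansions $(1+\eps^2\bpv)^{\gamma-2}=1+O(\eps^2\bpv)$ and $\tfrac{1}{1+\eps^2\bpv}=1+O(\eps^2\bpv)$, one obtains a \emph{pathwise} (random-coefficient, but noise-free) system
\begin{equation*}
\d R_\eps + \tfrac1\eps (V_\eps)_x\,\d t = F_1\,\d t, \qquad
\d V_\eps + \tfrac\gamma\eps (R_\eps)_x\,\d t = (V_\eps)_{xx}\,\d t + F_2\,\d t,
\end{equation*}
with $V_\eps(t,0)=V_\eps(t,1)=0$, where $F_1=-\eps(\bpv\buv)_x$ and $F_2$ collects the convective term $-\tfrac\eps2(\buv^2)_x$, the pressure correction $-\tfrac\gamma\eps\big[(1+\eps^2\bpv)^{\gamma-2}-1\big](\bpv)_x$, and the viscosity correction $-\tfrac{\eps^2\bpv}{1+\eps^2\bpv}(\buv)_{xx}$; each of these carries at least one extra power of $\eps$ relative to the acoustic terms.

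Since there is no It\^o term in the difference equations, I would run a pathwise energy estimate on $Y_\eps(t):=\gamma\|R_\eps(t)\|_{L^2}^2+\|V_\eps(t)\|_{L^2}^2$: testing the first equation with $\gamma R_\eps$, the second with $V_\eps$, and adding, the singular couplings combine into $-\tfrac\gamma\eps\intx\partial_x(R_\eps V_\eps)\,\d x=0$ by the Dirichlet condition on $V_\eps$, while the viscous term produces the favourable dissipation $-\|(V_\eps)_x\|_{L^2}^2$, giving
\begin{equation*}
\tfrac12\ddt Y_\eps + \|(V_\eps)_x\|_{L^2}^2 = \gamma\intx R_\eps F_1\,\d x + \intx V_\eps F_2\,\d x .
\end{equation*}
The right-hand side is then treated using: (i) the a priori bounds for \eqref{eq:NS}/\eqref{eq:nonlinear} supplied by the energy structure of $\cE$ — controlling $\sup_t$ of the (weighted) $L^2$ norms of $\bpv,\buv$ and the time-integrated dissipation, with the noise contributing at most a term of size $\eps^{2\beta}$ — together with the standard linear a priori bounds for \eqref{linear}; (ii) integration by parts, to move derivatives onto $V_\eps$ (absorbed by the dissipation) or off the highest-order factors, so that second derivatives of $\buv,\vv$ never occur in $L^2$; (iii) the one-dimensional interpolation $\|f\|_{L^\infty}\lesssim\|f\|_{L^2}^{1/2}\|f_x\|_{L^2}^{1/2}$ together with Young's inequality. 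The outcome should be a differential inequality $\tfrac12\ddt Y_\eps+\tfrac12\|(V_\eps)_x\|^2\le\big(C+h_\eps(t)\big)Y_\eps+g_\eps(t)$ in which $\int_0^T h_\eps\,\d t$ is a positive power of $\eps$ times an a priori quantity, and $\int_0^T g_\eps\,\d t$ is, after taking expectations, of size $\eps^{\alpha-1}+\eps^{2\beta-1}$: the $\eps^{\alpha-1}$ arising from the pressure and convective remainders paired against the $\eps^{\alpha}$-scale of $\cE$, the $\eps^{2\beta-1}$ from the noise-generated contributions to the a priori bounds of \emph{both} systems.

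Grönwall's inequality then gives a pathwise bound $\supt Y_\eps\le\Phi_\eps\big(Y_\eps(0)+\int_0^T g_\eps\,\d t\big)$ with $\Phi_\eps=\exp\!\big(2\int_0^T(C+h_\eps)\,\d t\big)$. At this point the factorial-moment hypothesis \eqref{eq:assumption_moment_initial_poly} is used decisively: it is equivalent to sub-exponential concentration of $\cE(1+\eps^2\bp_{0,\eps},\eps\bu_{0,\eps})$ at scale $\eps^\alpha$, which, propagated in time, makes $\int_0^T h_\eps\,\d t$ concentrated at a vanishing scale and forces $\ex[\Phi_\eps^2]\lesssim 1$ uniformly for $\eps$ below a threshold $\eps_0$ — this is the source of $\eps_0$, and it is also where one keeps $1+\eps^2\bpv$ uniformly away from $0$ and $\infty$, discarding the complementary event (of exponentially small probability) via a crude bound. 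Taking expectations and applying Cauchy–Schwarz to split $\ex[\Phi_\eps Y_\eps(0)]\le(\ex\Phi_\eps^2)^{1/2}(\ex Y_\eps(0)^2)^{1/2}$ — which accounts for the fourth powers and the square root in \eqref{eq:theorem_1_L^2_diff_exp} — concludes the argument.

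The main obstacle is steps (i)–(iii): the remainder estimates. The difficulty is that in the regime $\alpha<4$ or $\beta<1$ allowed by the hypotheses the fluctuation and acoustic unknowns need not stay bounded (they may even blow up as $\eps\to0$), so a naive energy estimate that bounds the nonlinear and lower-order terms by $\sup$-in-time Sobolev norms does not close. One must instead distribute derivatives by integration by parts and balance every spatial derivative of $\bpv$, $\buv$ or $\vv$ against its time-integrated dissipation bound and the accompanying powers of $\eps$, so that the net $\eps$-exponent is positive; carrying out this bookkeeping uniformly in $\gamma\ge1$ and in the noise strength is the technical heart of the proof.
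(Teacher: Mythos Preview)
Your proposal is correct and follows essentially the same route as the paper: the noise cancels in the difference, one runs a pathwise $L^2$ energy estimate on $\gamma\|R_\eps\|_{L^2}^2+\|V_\eps\|_{L^2}^2$, the singular acoustic coupling vanishes by integration by parts, and Gr\"onwall produces a factor $\exp\big(\tfrac{\eps}{\gamma}\int_0^T\|(\buv)_x\|_{L^2}^2\,\d t\big)$ whose second exponential moment is controlled (for $\eps$ small) by the factorial-moment hypothesis via the energy structure of $\cE$; Cauchy--Schwarz then yields the fourth-power initial term. One minor deviation: you propose to keep $1+\eps^2\bpv$ away from $0$ and $\infty$ by discarding an exponentially small bad event, whereas the paper instead proves directly that $\ex\big[\sup_t\|(1+\eps^2\bpv)^{\pm p}\|_{L^\infty}\big]\le C$ (a consequence of the pointwise bound $\|\rho^{\pm1}\|_{L^\infty}\le\exp\sqrt{8\cE}$ combined with the exponential moment of $\cE$) and then handles the density-weighted remainder terms by H\"older in expectation --- this avoids any truncation and is slightly cleaner, but your approach would also work.
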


We also translate the above result to the convergence of the invariant measures for $\gamma = 1$,
which is the second main result
in this paper.
The phase space for system \eqref{eq:nonlinear} is denoted by
\begin{equation*}
\bXv = \Big\{ (\bpv, \buv) \in  H^1(0, 1) \times H^1_0(0, 1)\,: \,\,
\intx \bpv \d x = 0, \,\, 1+\eps^2 \bpv >0 \Big\}.
\end{equation*}
Thanks to the results in \cite{michele}, the corresponding Markov semigroup has
an invariant probability measure $\bar{\mu}_\eps$ on $\bXv$;
see Section \ref{section:convergence_invariant_measures}.
On the other hand, the Markovian framework for system \eqref{linear} can be set up in the larger phase space:
\begin{align} \label{eq:state_space_acoustic}
\bX = \Big\{ (r, v) \in  H^1(0, 1) \times H^1_0(0, 1)\, :\, \intx r \d x = 0\Big\}.
\end{align}
We denote by $\omega_\eps$ the unique invariant probability measure of system \eqref{linear} in $\bX$.

\begin{theorem}\label{thm:convergence_invariant_measures}
Let $\gamma = 1$. There exists $\eps_0>0$ such that, for all $0 < \eps < \eps_0$,
\begin{align*}
\mathcal{W}_{L^2} (\bar{\mu}_\eps, \omega_\eps) \lesssim \eps^{\beta-\frac{1}{2}},
\end{align*}
where $\mathcal{W}_{L^2}$ is the Wasserstein metric on $\bX$ defined
by using the $L^2 \times L^2$ norm\footnote{We have abused the notation $\bar{\mu}_\eps$
to mean the Borel probability measure on $\bX$ that naturally extends $\bar{\mu}_\eps$.
See \eqref{eq:measure_extension} for the precise definition.}.
\end{theorem}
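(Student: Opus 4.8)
The plan is to combine the finite-time estimate of Theorem~\ref{thm:finite_time_convergence} with the exponential mixing of the linear acoustic system~\eqref{linear}, run from a common stationary datum so that the initial-difference term in~\eqref{eq:theorem_1_L^2_diff_exp} disappears. Throughout $\gamma=1$ and, as in Theorem~\ref{thm:finite_time_convergence}, $\beta>\tfrac12$; write $P_t$ for the Markov semigroup of~\eqref{linear} on $\bX$, so $\omega_\eps=P_t\omega_\eps$. Fix one Wiener process $W$ driving both~\eqref{eq:nonlinear} and~\eqref{linear}; let $(\bp_{0,\eps},\bu_{0,\eps})\sim\bmv$ (so it is supported on $\bXv$) and run~\eqref{eq:nonlinear} from it, which by invariance has law $\bmv$ at every time. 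Since $\bXv\subset\bX$ and $\intx\bp_{0,\eps}\d x=0$ almost surely, $(\bp_{0,\eps},\bu_{0,\eps})$ is an admissible datum also for~\eqref{linear}; running~\eqref{linear} from it driven by the same $W$ yields a solution $(\rv,\vv)$ with law $P_t\bmv$. Because both systems start from the \emph{same} random datum, $\|\bp_{0,\eps}-r_{0,\eps}\|_{L^2}=\|\bu_{0,\eps}-v_{0,\eps}\|_{L^2}=0$ almost surely, and the two solutions at time $T$ form a coupling of $(\bmv,P_T\bmv)$; hence Theorem~\ref{thm:finite_time_convergence} (applicable once the moment hypothesis below is checked) gives, for each fixed $T>0$ and with implied constant depending on $T$ but not on $\eps$,
\[
\w(\bmv,P_T\bmv)^2\ \le\ \ex\Big[\supt\big(\|\bpv-\rv\|_{L^2}^2+\|\buv-\vv\|_{L^2}^2\big)\Big]\ \lesssim\ \eps^{\alpha-1}+\eps^{2\beta-1},
\]
where $\alpha>1$ is the exponent in~\eqref{eq:assumption_moment_initial_poly} satisfied by $(1+\eps^2\bpv,\eps\buv)$ under $\bmv$.

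Next, $P_t$ contracts $\w$ at an $\eps$-independent rate. The difference $(R,V)$ of two solutions of~\eqref{linear} with the same noise solves the homogeneous \emph{deterministic} system $\partial_tR+\tfrac1\eps V_x=0$, $\partial_tV+\tfrac\gamma\eps R_x=V_{xx}$, equivalently the strongly (Kelvin--Voigt) damped wave equation $\partial_{tt}R-\partial_tR_{xx}-\tfrac\gamma{\eps^2}R_{xx}=0$ with mean-zero $R$ and Dirichlet $V$. A hypocoercive energy estimate --- equivalently, inspecting the eigenvalues $\lambda_k^{\pm}$, which for $\eps$ small satisfy $\operatorname{Re}\lambda_k^{\pm}\le-\tfrac12\pi^2$ with eigenbases of uniformly bounded condition number since the strong-damping coefficient is $\eps$-independent --- gives $c,C>0$, independent of $\eps\in(0,\eps_0]$, such that
\[
\|R(t)\|_{L^2}^2+\|V(t)\|_{L^2}^2\ \le\ C\,\e^{-ct}\big(\|R(0)\|_{L^2}^2+\|V(0)\|_{L^2}^2\big).
\]
Coupling $P_t\mu$ and $P_t\nu$ synchronously from an optimal $\w$-coupling of $\mu,\nu$ then yields $\w(P_t\mu,P_t\nu)\le\sqrt{C}\,\e^{-ct/2}\,\w(\mu,\nu)$ for all $\mu,\nu$ on $\bX$ of finite second moment; this is exactly the mixing underlying the uniqueness of $\omega_\eps$, and is carried out in Section~\ref{section:convergence_invariant_measures}.

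The remaining input is that $\bmv$ satisfies~\eqref{eq:assumption_moment_initial_poly} with $\alpha=2\beta$. For $\gamma=1$ this is part of the invariant-measure analysis (built on~\cite{michele}; see Section~\ref{section:convergence_invariant_measures}): the stationary energy balance for~\eqref{eq:NS}, in which the It\^o correction of the noise $\eps^{\beta}\p\sigma\d W$ has intensity $O(\eps^{2\beta})$ (via~\eqref{eq:sigma_sum_cond}) and is balanced by viscous dissipation, propagates to exponential moments of the energy~$\cE$ in~\eqref{eq:def_modified energy}, $\ex_{\bmv}\big[\cE(1+\eps^2\bpv,\eps\buv)^p\big]\lesssim p!\,\lambda^p\eps^{2\beta p}$; in particular $\ex_{\bmv}\big[\|\bpv\|_{L^2}^2+\|\buv\|_{L^2}^2\big]<\infty$, so $\w(\bmv,\omega_\eps)<\infty$ because $\omega_\eps$ is Gaussian on $\bX$. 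With $\alpha=2\beta$, the first display reads $\w(\bmv,P_T\bmv)\lesssim\eps^{\beta-\frac12}$ for each fixed $T$. Finally fix $T=T_0$ with $\sqrt{C}\,\e^{-cT_0/2}\le\tfrac12$; using $\omega_\eps=P_{T_0}\omega_\eps$, the triangle inequality, and the contraction,
\[
\w(\bmv,\omega_\eps)\ \le\ \w(\bmv,P_{T_0}\bmv)+\w\big(P_{T_0}\bmv,P_{T_0}\omega_\eps\big)\ \le\ \w(\bmv,P_{T_0}\bmv)+\tfrac12\,\w(\bmv,\omega_\eps),
\]
and absorbing the last term (legitimate since $\w(\bmv,\omega_\eps)<\infty$) gives $\w(\bmv,\omega_\eps)\le 2\,\w(\bmv,P_{T_0}\bmv)\lesssim\eps^{\beta-\frac12}$. (Here $\bmv$ is meant as its Borel extension to $\bX$, cf.~\eqref{eq:measure_extension}.)

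The main obstacle is the moment estimate of the third paragraph --- pinning down the exact scaling $\ex_{\bmv}[\cE^p]\lesssim p!\,\lambda^p\eps^{2\beta p}$ --- since it is the technically delicate point and the single place where $\gamma=1$ is genuinely used, via the logarithmic pressure potential $H$, which is what lets $\cE$ close as a Lyapunov functional with the right $\eps$-power. It is also precisely what forces $\alpha=2\beta$ in~\eqref{eq:assumption_moment_initial_poly}, hence $\alpha-1\ge2\beta-1$, so that the $\eps^{\alpha-1}$ term in Theorem~\ref{thm:finite_time_convergence} is absorbed into $\eps^{2\beta-1}$ and the stated rate $\eps^{\beta-1/2}$ emerges. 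The uniform-in-$\eps$ spectral gap of the second paragraph is softer: although the acoustic speed $\eps^{-1}$ diverges, the strong-damping coefficient is $\eps$-independent, so the decay rate stays bounded below, and the only real care is to check that the contraction constant $C$ does not degenerate as $\eps\to0$.
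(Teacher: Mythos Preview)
Your overall strategy matches the paper's: start both systems from a common stationary datum distributed as $\bmv$, invoke Theorem~\ref{thm:finite_time_convergence} with $\alpha=2\beta$ (justified by Proposition~\ref{prop_invariant_measures_moment}) to bound the finite-time difference, and close via exponential contraction of the acoustic semigroup plus the triangle inequality.

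The gap is the $L^2$-contraction step. You assert that the homogeneous acoustic system contracts in $L^2\times L^2$ with constants independent of $\eps$, citing ``eigenbases of uniformly bounded condition number since the strong-damping coefficient is $\eps$-independent.'' That justification is wrong: in the mode decomposition $r\sim\cos(k\pi x)$, $v\sim\sin(k\pi x)$, the $2\times2$ block for mode $k$ has eigenvalues $\lambda_\pm=\tfrac12\big(-m^2\pm m\sqrt{m^2-4/\eps^2}\big)$ with $m=k\pi$, and its eigenvectors become collinear (a Jordan block) at the critically damped mode $m\eps=2$; the eigenbasis condition number blows up as $m\eps\to2$. Likewise, a hypocoercive functional needs a cross term such as $\eps\int v\,r_x$, which inevitably brings in $\|r_x\|_{L^2}$, so energy methods do not give a pure $L^2$ estimate either. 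You also say this contraction ``is carried out in Section~\ref{section:convergence_invariant_measures},'' but it is not: the paper explicitly flags this as the ``mismatch between the topology used to prove the finite-time convergence and the contraction property.''

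The paper's remedy is to work instead in the $\eps$-weighted norm $\|(r,v)\|_\eps^2=\|(r,v)\|_{L^2}^2+\eps^2\|(r_x,v_x)\|_{L^2}^2$, for which contraction follows cleanly from the energy $\mathcal{G}$ (Corollary~\ref{cor_linear_exponential_approach}, Proposition~\ref{prop:exp_contraction}). The price is that $\mathcal{W}_\eps(\bmv,(\mathcal{Q}_T^\eps)^*\bmv)$ now contains, beyond the $L^2$ difference handled by Theorem~\ref{thm:finite_time_convergence}, the derivative contributions $\eps\|(\bpv)_x\|_{L^2}$, $\eps\|(\buv)_x\|_{L^2}$, and $\eps\|((\rv)_x,(\vv)_x)\|_{L^2}$, each of which must separately be shown to be $O(\eps^\beta)$ under the relevant laws. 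This is where Proposition~\ref{prop:est_invar_measure_linear} and Lemma~\ref{lemma:est_invar_meas_enk}/Proposition~\ref{prop_invariant_measures_moment} enter, and only at the very end does one pass back via $\mathcal{W}_{L^2}\le\mathcal{W}_\eps$. Your route, if the uniform $L^2$ contraction could be established by a genuinely different argument, would bypass these extra derivative estimates; but as written the argument is incomplete at precisely the point the paper identifies as delicate.
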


\begin{remark} Concerning our main results, we have several remarks in order{\rm :}
\begin{enumerate}
\item[(i)] Assumption \eqref{eq:sigma_sum_cond} implies in particular that
\begin{equation*}
\|\sigma\|_{L^\infty}^2:=  \sum_{k = 1}^\infty \|\sigma_k\|_{L^\infty}^2 \leq  \sum_{k = 1}^\infty \|(\sigma_k)_x\|_{L^2}^2 < \infty.
\end{equation*}
This
covers the natural choice that
$\sigma_k = \alpha_k \sin (k \pi x)$ with $(\alpha_k)_{k=1}^\infty \subset \mathbb{R}$ and $\sum_{k=1}^\infty k^4 \alpha_k^2 < \infty$.

\item[(ii)] The results apply to the case: $\beta\in (\frac{1}{2}, 1)$,
where the noise term in the fluctuation system \eqref{eq:nonlinear} blows up as $\eps \to 0$.

\item[(iii)] The results above are better than the corresponding situation with a deterministic external forcing,
in which case $\beta>1$ is required, namely the external forcing must vanish with the Mach number.
The improvement in our cases is due to the martingale structure of the stochastic integrals.
\end{enumerate}
\end{remark}

\subsection{Techniques and key points of the analysis}
Before presenting the detailed analysis in the subsequent sections,
we summarize here the novel ingredients and key observations that constitute the foundation of our approach.

\diampar{Polynomial and exponential moment estimates for the energy for all $\gamma \geq 1$}
These generalize the results obtained in \cite{michele} where only the isothermal pressure law ($\gamma =1$)
is considered (see Proposition \ref{prop:poly_and_exp_moments_bounds_NS}).
Instead of trying to control the source terms from the random forcing
by using the dissipative terms (which are degenerate for $\gamma >1$)
in the energy balance (see \eqref{eq:modified_energy_balance} below) as in \cite{michele},
we directly use $\cE$ to estimate them, resulting in
the bounds that grow exponentially in time.

\diampar{Polynomial moment estimates on the upper and lower bounds of the density at small Mach numbers}
These estimates are a consequence of the exponential moment
estimates for $\cE$. The estimates allow us to obtain fundamental controls on the fluctuation variables,
which play a critical role on the finite-time convergence analysis
(see Proposition \ref{prop:expectation_non_linear}).

\diampar{Exponential contraction property of the acoustic system due to the viscosity}
We prove that the acoustic system has a fine structure of global exponential contraction.
This property allows the analysis of the large-time behavior of the system and is of paramount importance
in translating the finite-time convergence analysis to the study of the convergence of
invariant measures (which could informally be considered as capturing the information
on the behavior of the system when $t = \infty$). There is a mismatch between the topology
used to prove the finite-time convergence and the contraction property:
In order to overcome this issue,
we employ an $\eps$-weighted $H^1$ norm that is stronger than the $L^2$ norm,
but coincides with the $L^2$ norm in the limit $\eps\to 0$.

\diampar{Novel estimates of the invariant measures}
We prove that statistically stationary solutions distributed as $\bar{\mu}_\eps$
satisfy the assumption of Theorem \ref{thm:finite_time_convergence}.
These are some novel estimates on the invariant measures of the compressible Navier-Stokes equations.
In particular, we obtain the finite exponential moments of $\cE$.
We believe these estimates are of independent interest.

\subsection{Organization of the paper}
In Section \ref{section:modified_energy_estimates_NS},
we make the estimates of solutions of the Navier-Stokes equations \eqref{eq:NS}
and use them as a starting point for the analysis of system \eqref{eq:nonlinear} in Section \ref{section:estimates_pertutrbation_system}.
Next, we study the properties of the acoustic system \eqref{linear} in Section \ref{section:acoustic}.
All these estimates are gathered together in Section \ref{section:finite_time_analysis}
to give a complete proof of Theorem \ref{thm:finite_time_convergence}.
The study of the properties and the convergence of invariant measures is left for Section \ref{section:convergence_invariant_measures}.

Throughout the paper, we adopt the convention to denote $C$ as the constants whose exact values are irrelevant
and may be different at each occurrence.
We also use the subscripts to emphasize the dependence of the constants on various parameters.
In all cases, $C$ is
independent of $\varepsilon$.

\section{Energy Structure for the Navier-Stokes Equations} \label{section:modified_energy_estimates_NS}
In this section, we study the properties of solutions of problem \eqref{BC-0}--\eqref{ID-0} for system \eqref{eq:NS} in detail.
Considering the pathwise transformation
$$
\huv = \uv -  \sum_{k = 1}^\infty \int_0^t\eps^{\beta}\sigma_k \d W^k_s,
$$
we can transform system \eqref{eq:NS} into a system with random coefficients but without stochastic integrals.
Then, using the ideas in \cite{onedns}, we can obtain the existence and uniqueness
of strong pathwise solutions of problem \eqref{BC-0}--\eqref{ID-0}
for system \eqref{eq:NS} (see also \cites{1d_96, 1d_98, Hoff00, michele}):

\begin{proposition} \label{prop:existence_N-S}
Assume $(\p_{0, \eps}, u_{0, \eps})_{\eps \in (0, 1]}$ is a collection of $\mathcal{F}_0$-measurable
random variables in $H^1\times H^1_0$ such that
$$
\intx \p_{0, \eps} \d x  =1, \quad \p_{0, \eps}>0 \qquad\,\,\mbox{almost surely}.
$$
Then there exists a unique adapted strong pathwise solution $(\pv, \uv)$ of system \eqref{eq:NS} such that
\begin{equation*}
\pv \in C([0, \infty) \times [0, 1]) \cap L^\infty_{\mathrm{loc}}(0, \infty; H^1),
\,\,\,\, \uv \in C([0, \infty); H^1_0) \cap L^2_{\mathrm{loc}}(0, \infty; H^2)
\qquad\, \mbox{almost surely}.
\end{equation*}
Moreover, the map $t \mapsto \p(t, \cdot) \in H^1$ is weakly continuous
and, for every $t>0$, the density is bounded away from vacuum{\rm :}
\begin{equation*}
\p(t, \cdot)^{-1} \in L^\infty(0, 1)\qquad\, \mbox{almost surely}.
\end{equation*}

\end{proposition}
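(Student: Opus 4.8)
The plan is the standard reduction for stochastic PDEs with additive noise: subtract the stochastic integral by a pathwise change of unknown, solve the resulting realization-wise deterministic PDE by the one-dimensional deterministic theory, and transform back while monitoring adaptedness. First I would introduce the shift process $Z_\eps(t):=\eps^{\beta}\sum_{k=1}^\infty\int_0^t\sigma_k\,\d W^k_s$, which does not involve $(\p,u)$. By \eqref{eq:sigma_sum_cond}, $\sigma\subset H^1_0\cap H^2$ with $\sum_k\|(\sigma_k)_{xx}\|_{L^2}^2<\infty$, so $Z_\eps$ is a Gaussian martingale with values in $H^1_0\cap H^2$; by a Hilbert-space martingale maximal inequality and Kolmogorov's continuity theorem it admits a modification with paths in $C([0,\infty);H^1_0\cap H^2)$, hence $t\mapsto Z_\eps(t)$ is continuous into $C^1([0,1])$ and $t\mapsto(Z_\eps(t))_{xx}$ continuous into $L^2(0,1)$. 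These path properties fix the regularity class of the coefficients appearing below.

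Since $\p$ has finite variation in time (no martingale part), the product rule gives $\d(\p u)=u\,\d\p+\p\,\d u$; combined with the continuity equation, the momentum equation of \eqref{eq:NS} becomes, after division by $\p>0$,
\begin{equation*}
\d u=\Big(\frac{u_{xx}}{\p}-uu_x-\frac{\gamma}{\eps^{2}}\,\p^{\gamma-2}\p_x\Big)\d t+\eps^{\beta}\sigma\,\d W_t,
\end{equation*}
so the noise enters \emph{additively} in the $u$-equation. Consequently $v:=u-Z_\eps$ solves, for $\mathbb P$-a.e.\ fixed $\omega$, the deterministic system
\begin{equation*}
\p_t+\big(\p(v+Z_\eps)\big)_x=0,\qquad \p\,v_t+\p(v+Z_\eps)(v+Z_\eps)_x+\tfrac{1}{\eps^{2}}(\p^\gamma)_x=v_{xx}+(Z_\eps)_{xx}\quad\text{on }(0,1),
\end{equation*}
with $v(t,0)=v(t,1)=0$, initial data $\p(0,\cdot)=\p_0$ and $v(0,\cdot)=u_0$ (as $Z_\eps(0)=0$) in $H^1\times H^1_0$ satisfying $\intx\p_0\,\d x=1$ and $\p_0>0$, and coefficients and forcing depending on $\omega$ only through the continuous field $Z_\eps(\omega)$ above.

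For a.e.\ fixed $\omega$ this system is a bounded, lower-order perturbation — by the prescribed transport terms involving $Z_\eps$ and the $L^2$-source $(Z_\eps)_{xx}$ — of the one-dimensional compressible Navier--Stokes equations with constant viscosity. I would then invoke the deterministic existence--uniqueness--regularity theory of \cite{onedns} (see also \cites{1d_96,1d_98,Hoff00}) to obtain a unique global solution with $\p\in C([0,\infty)\times[0,1])\cap L^\infty_{\mathrm{loc}}(0,\infty;H^1)$ and $v\in C([0,\infty);H^1_0)\cap L^2_{\mathrm{loc}}(0,\infty;H^2)$, with $t\mapsto\p(t,\cdot)$ weakly continuous into $H^1$ and $\p,\p^{-1}$ bounded on $[0,T]\times[0,1]$ for every $T>0$ (no vacuum), the bounds depending on $\omega$ only through $\sup_{[0,T]}\|Z_\eps(t)\|_{H^2}$ and the initial data. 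Then $u:=v+Z_\eps$ inherits the regularity of $v$ and solves \eqref{eq:NS} with the asserted regularity; uniqueness for \eqref{eq:NS} follows from that for $(\p,v)$, since $u\mapsto v=u-Z_\eps$ is a bijection. Adaptedness follows from the measurable dependence of the pathwise solution on $\big((\p_0,u_0),\,Z_\eps|_{[0,t]}\big)$ — a consequence of the deterministic uniqueness and stability, or of the measurability of the approximation scheme — so $(\p(t),u(t))$ is a measurable function of $\mathcal F_t$-measurable data.

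The main obstacle is that the core is outsourced to the cited one-dimensional deterministic theory, so the actual work is (i) verifying that this theory tolerates the extra prescribed transport terms and the source $(Z_\eps)_{xx}$, with a priori bounds locally uniform in the random data $Z_\eps$ — precisely where the $C([0,\infty);H^1_0\cap H^2)$ regularity of the shift is used; and (ii) passing from realization-wise solvability to an adapted solution, i.e.\ checking the measurability of the solution map carefully enough to conclude $\mathcal F_t$-measurability. Neither is conceptually hard, but both must be stated with care; the genuinely probabilistic input is confined to the shift process and the additive reformulation.
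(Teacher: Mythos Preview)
Your approach is exactly the one indicated in the paper: the paper introduces the same pathwise shift $\hat u_\eps=u_\eps-\sum_k\int_0^t\eps^\beta\sigma_k\,\d W^k_s$ to reduce \eqref{eq:NS} to a system with random coefficients but no stochastic integrals, and then appeals to the deterministic one-dimensional theory of \cite{onedns} (and \cites{1d_96,1d_98,Hoff00,michele}) without writing out further details. Your proposal is correct and in fact more detailed than what the paper records, since the paper explicitly skips the proof of this classical result.
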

We skip the proof of this classical result and limit ourselves to derive  some \emph{a priori} estimates for the system.

\subsection{The main energy functional and its properties}
The classical {\it a priori} estimates of solutions of the compressible Navier-Stokes equations
usually begin with the physical energy. However, in the one-dimensional case, we can use
the energy $\cE$ defined in \eqref{eq:def_modified energy},
which is more powerful as it provides additional estimates
on the $H^1$-norm of the density.
A similar structure in Lagrangian coordinates appeared in \cite{Kanel}.
The energy has been used in \cites{Hoff99, Hoff00} for deterministic isothermal flows to study
the large-time behavior of solutions. Its generalization
for the density-dependent viscosities, which even applies to some multidimensional cases,
is sometimes known as the BD-entropy that was introduced in
Bresch-Desjardins \cites{bresch2003existence, bresch2004some, bresch2007existence, bresch2003some}.
The entropy has then been used critically to prove
various {\it a priori} estimates, particularly on the lower bound of the density
in the one-dimensional case (see e.g. \cites{onedns, li2008vanishing}).
We now record some properties of $\cE(\p, u)$.

\begin{proposition} \label{prop:property_modified_energy}
Let $(\p, u) \in H^1 \times H^1_0$ such that $\intx \p(x) \d x = 1$ and $\p(x) >0$. Then
\begin{enumerate}
\item[\rm (i)] $\dis \intx \p u^2 \d x \leq 4 \cE(\p, u)$,
\vspace{1mm}
\item[\rm (ii)] $\dis \intx \frac{|\p_x|^2}{\p^3} \d x \leq 8 \cE(\p, u)$,\vspace{1mm}
\item[\rm (iii)] $\dis \intx H(\p) \d x\leq \cE(\p, u)$,\vspace{1mm}
\item[\rm (iv)] $\dis \|\p^{-1}\|_{L^\infty} \leq 2\big(1+2\cE(\p, u)\big)$,\vspace{1mm}
\item[\rm (v)] $\dis \max\{\|\p^{-1}\|_{L^\infty}, \|\p\|_{L^\infty}\}
\leq \exp (\sqrt{8\cE(\p, u)})$.
\end{enumerate}
\end{proposition}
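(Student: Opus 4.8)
The plan is to isolate the only term of $\cE$ that is not sign-definite, namely $\tfrac12\frac{\p_x u}{\p}$, and absorb it into the two genuinely positive quadratic terms $\tfrac12\p u^2$ and $\tfrac14\frac{\p_x^2}{\p^3}$ via Young's inequality with a free weight that is then tuned differently for (i) and (ii). Before that I would record the elementary facts used throughout: since $\p\in H^1(0,1)\hookrightarrow C([0,1])$ and $\p>0$ on $[0,1]$, we have $\min_{[0,1]}\p>0$, so $\p^{-1}$, $\p^{-1/2}$ and $\log\p$ all belong to $H^1(0,1)$; the potential $H$ is nonnegative because $s\mapsto s\log s - s + 1$ (for $\gamma=1$) and $s\mapsto s^\gamma-\gamma(s-1)$ (for $\gamma>1$) are convex with minimum at $s=1$; and, since $\intx\p\,\d x = 1$ and $\p$ is continuous, the intermediate value theorem provides a point $x_0\in[0,1]$ with $\p(x_0)=1$.

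For (i)--(iii), set $A:=\p_x\p^{-3/2}$ and $B:=\p^{1/2}u$, so that $\frac{\p_x u}{\p}=AB$, $\frac{\p_x^2}{\p^3}=A^2$ and $\p u^2=B^2$. Young's inequality $\tfrac12 AB\ge-\tfrac{\delta}{4}A^2-\tfrac{1}{4\delta}B^2$ for $\delta>0$ then yields
\[
\cE(\p,u)\;\ge\;\intx\Big[\Big(\tfrac12-\tfrac{1}{4\delta}\Big)\p u^2\;+\;H(\p)\;+\;\Big(\tfrac14-\tfrac{\delta}{4}\Big)\frac{\p_x^2}{\p^3}\Big]\d x .
\]
Choosing $\delta=1$ kills the gradient term and leaves $\cE\ge\tfrac14\intx\p u^2\,\d x+\intx H(\p)\,\d x$; dropping $H\ge0$ gives (i), dropping $\p u^2\ge0$ gives (iii). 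Choosing $\delta=\tfrac12$ kills the velocity term and leaves $\cE\ge\tfrac18\intx\frac{\p_x^2}{\p^3}\,\d x$, which is (ii).

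For (v), I would use $(\log\p)_x=\p_x/\p=(\p_x\p^{-3/2})\,\p^{1/2}$, so that, by Cauchy--Schwarz and the mass normalization,
\[
\intx\big|(\log\p)_x\big|\,\d x\;\le\;\Big(\intx\frac{\p_x^2}{\p^3}\,\d x\Big)^{1/2}\Big(\intx\p\,\d x\Big)^{1/2}\;\le\;\sqrt{8\,\cE(\p,u)}
\]
by (ii). Since $\log\p(x_0)=0$, integrating from $x_0$ gives $|\log\p(x)|\le\sqrt{8\,\cE(\p,u)}$ for every $x$, i.e.\ $\p(x)\le\exp(\sqrt{8\cE(\p,u)})$ and $\p(x)^{-1}\le\exp(\sqrt{8\cE(\p,u)})$, which is (v). For (iv), put $w:=\p^{-1/2}$, so $|w_x|=\tfrac12|\p_x|\p^{-3/2}$ and $\|w_x\|_{L^2}^2=\tfrac14\intx\frac{\p_x^2}{\p^3}\,\d x\le 2\cE(\p,u)$ by (ii); since $w(x_0)=1$, Cauchy--Schwarz on the unit interval gives $\|\p^{-1/2}\|_{L^\infty}\le 1+\sqrt{2\cE}$, and squaring together with $2\sqrt{2\cE}\le1+2\cE$ yields $\|\p^{-1}\|_{L^\infty}\le(1+\sqrt{2\cE})^2\le 2\big(1+2\cE(\p,u)\big)$.

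None of these steps is a genuine obstacle; the computations are elementary. The only points that require care are the tuning of $\delta$ so that the constants $4$ and $8$ in (i)--(ii) come out exactly, and --- for (iv)--(v) --- choosing the right auxiliary functions ($\p^{-1/2}$ for an $L^2$ gradient bound, $\log\p$ for an $L^1$ gradient bound) and observing that the accompanying weight $\p^{1/2}$ combines precisely with the mass constraint $\intx\p\,\d x=1$ to close the estimate.
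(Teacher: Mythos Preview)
Your proof is correct and essentially identical to the paper's. The paper writes down the two specific pointwise inequalities $|\p_x u/\p|\le\tfrac12\p u^2+\tfrac12\p_x^2/\p^3$ and $|\p_x u/\p|\le\p u^2+\tfrac14\p_x^2/\p^3$, which are precisely your Young inequality with $\delta=1$ and $\delta=\tfrac12$; and for (iv)--(v) it uses exactly your auxiliary functions $\p^{-1/2}$ and $\log\p$ together with the point $x_0$ where $\p(x_0)=1$.
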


\begin{proof}
The first two estimates follow respectively from the pointwise inequalities
\begin{equation*}
\left|\frac{\p_x u}{\p}\right|
\leq \frac{1}{2}\p u^2 + \frac{1}{2}\frac{|\p_x|^2}{\p^3},
\qquad
\left|\frac{\p_x u}{\p}\right|
\leq \p u^2 + \frac{1}{4}\frac{|\p_x|^2}{\p^3}.
\end{equation*}
The third estimate follows from the fact that
$$
\frac{1}{2}\p u^2 + \frac{1}{2}\frac{\p_x u}{\p} + \frac{1}{4} \frac{|\p_x|^2}{\p^3} \geq 0,
$$
which can be seen from the above two inequalities.

To prove the last two estimates, we note that
there exists $x_0 \in [0, 1]$ such that $\p(x_0)=1$, since $\intx \p(x) \d x =1$ and $\rho(x)$ is continuous.
Hence, for all $x \in [0, 1]$,
\begin{align*}
\big|\p^{-\frac{1}{2}}(x)-1\big|&= \big|\p^{-\frac{1}{2}}(x)- \p^{-\frac{1}{2}}(x_0)\big|
= \Big|\int_{x_0}^x \big(\p^{-\frac{1}{2}}(y)\big)_y \d y \Big| \\
&\leq \frac{1}{2}\intx \big|\rho^{-\frac{3}{2}}\p_x\big| \d x
\leq \frac{1}{2} \Big(\intx \frac{|\p_x|^2}{\p^3} \d x\Big)^{\frac{1}{2}} \leq  \sqrt{2\cE(\p, u)}.
\end{align*}
The triangle inequality then implies the desired estimate. Likewise, we have
\begin{align*}
|\log \p(x)| &= | \log \p(x)-\log\p(x_0) |
= \Big|\int_{x_0}^x \big(\log \p(y)\big)_y \d y \Big|
\leq \Big(\intx \p(y)\d y\Big)^{\frac{1}{2}}\Big(\intx \frac{\p_x^2}{\p^3}\d y\Big)^{\frac{1}{2}}.
\end{align*}
The first integral in the inequality above is $1$ due to the mass condition,
while the second integral is bounded by $8 \cE (\p, u)$.
Exponentiating both sides gives the last estimate.
\end{proof}

\subsection{Energy estimates for the Navier-Stokes equations}\label{sec:2.2}
For solutions of system \eqref{eq:NS}, we can now derive the following estimates:

\begin{proposition} \label{prop:poly_and_exp_moments_bounds_NS}
Assume $(\p_{0, \eps}, u_{0, \eps})_{\eps \in (0, 1]}$ is a collection of $\mathcal{F}_0$-measurable
random variables in $H^1\times H^1_0$ such that
$$
\intx \p_{0, \eps} \d x  =1, \quad\, \p_{0, \eps}>0 \qquad \mbox{almost surely}.
$$
Then the corresponding solution $(\pv, \uv)$ of system \eqref{eq:NS} satisfies the following properties{\rm :}

\begin{enumerate}
\item[\rm (i)] For all $p \geq 2$, there exists $C_p>0$ such that, for all $\eps \in (0, 1]$ and $T \geq 1$,
\begin{align}
&\ex \Big[\sup_{0\leq t \leq T} \Big(\cE(\pv, \uv) + \frac{1}{2} \int_0^t \|(\uv)_x\|_{L^2}^2 \d s
+ \frac{1}{2\eps^2 } \int_0^t \|\pv^{\frac{\gamma-3}{2}}(\pv)_x\|_{L^2}^2 \d s\Big)^p\Big] \nonumber\\[1mm]
&\,\leq  3^{p-1}\e^T \, \ex \big[\cE(\p_{0, \eps},u_{0, \eps})^p\big]
+ C_p \, \e^T \, T^{p+1} \eps^{2\beta p} \|\sigma\|^{2p}_{L^\infty},\label{eq:orig_poly_bound}
\end{align}
with $C_p = O(1296^p p^p)$ as $p \to \infty$.

\item[\rm (ii)] For
\begin{equation}\label{def:eq_eta_0}
\eta_0(\eps, T) := (4000 T \eps^{2\beta} \|\sigma\|^{2}_{L^\infty})^{-1},
\end{equation}
there exists $C>0$ such that, for all $\eps \in (0, 1]$,  $T \geq 1$, and $0 < \eta < \eta_0(\eps, T)$,
\begin{align}\label{eq:orig_expo_bound}
&\ex \Big[\exp \Big(\eta\sup_{0\leq t \leq T} \big(\cE(\pv, \uv) + \frac{1}{2} \int_0^t \|(\uv)_x\|_{L^2}^2 \d t
+ \frac{1}{2\eps^2 } \int_0^t \|\pv^{(\gamma-3)/2}(\pv)_x\|_{L^2}^2 \d t  \big)\Big) \Big]\nonumber\\[1mm]
&\leq  \frac{\e^T}{3}
\ex \big[\exp\big(3 \eta \cE(\p_{0, \eps},u_{0, \eps})\big)\big]
+\frac{C T\e^T}{1-\eta / \eta_0(\eps, T)}.
\end{align}
\end{enumerate}
\end{proposition}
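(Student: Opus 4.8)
The plan is to reduce both statements to a moment estimate for the scalar semimartingale
\[
Y(t) := \cE(\pv(t),\uv(t)) + \tfrac{1}{2}\int_0^t\|(\uv)_x\|_{L^2}^2\,\d s + \tfrac{1}{2\eps^2}\int_0^t\|\pv^{(\gamma-3)/2}(\pv)_x\|_{L^2}^2\,\d s ,
\]
controlling the effect of the noise \emph{by $\cE$ itself} rather than by the (for $\gamma>1$ degenerate) dissipation. First I would derive the energy balance by applying It\^o's formula to $\cE(\pv,\uv)$ along the strong pathwise solution of Proposition~\ref{prop:existence_N-S} — after a standard regularization in $x$ and localization at the stopping times where $\cE$ or $\|\pv^{-1}\|_{L^\infty}$ first exceeds a level $n$ (so that the negative powers of $\pv$ in $\cE$ are legitimate), then letting $n\to\infty$ by monotone convergence. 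Only the terms $\tfrac12\p u^2$ and $\tfrac12\frac{\p_x u}{\p}$ of $\cE$ see the noise $\p\sigma_\eps\,\d W$, and collecting their It\^o contributions (the deterministic part of the balance producing exactly the dissipation built into $Y$) gives, with $\cE_0:=\cE(\p_{0,\eps},u_{0,\eps})$,
\[
\d Y(t) = \d M_t + \tfrac{1}{2}\eps^{2\beta}\intx\pv\,\textstyle\sum_k\sigma_k^2\,\d x\,\d t , \qquad \d M_t = \eps^\beta\sum_k\Big(\intx\big(\pv\uv+\tfrac{(\pv)_x}{2\pv}\big)\sigma_k\,\d x\Big)\d W^k_t .
\]
Two elementary facts then drive everything: the mass constraint $\intx\pv\,\d x=1$ bounds the It\^o correction by $\tfrac12\eps^{2\beta}\|\sigma\|_{L^\infty}^2$, while Proposition~\ref{prop:property_modified_energy}(i)--(ii), together with Cauchy--Schwarz, the mass constraint, and the regularity \eqref{eq:sigma_sum_cond} of $\sigma$, give $\d\langle M\rangle_t\lesssim\eps^{2\beta}\|\sigma\|_{L^\infty}^2\,\cE(\pv,\uv)\,\d t\le\eps^{2\beta}\|\sigma\|_{L^\infty}^2\,Y(t)\,\d t$. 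Since the dissipation in $Y$ is nonnegative, $Y\ge\cE(\pv,\uv)\ge0$, $Y(0)=\cE_0$, and $Y$ dominates the quantity inside the supremum in both \eqref{eq:orig_poly_bound} and \eqref{eq:orig_expo_bound}.

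For (i) the key point is that $\langle M\rangle$ is controlled \emph{linearly} by $\cE\le Y$ (a ``square-root''/CIR-type structure), so the $p$-th moments of $Y$ grow factorially in $p$ rather than like $p^{2p}$. Concretely, applying It\^o's formula to $Y(t)^p$ and taking expectations (the stochastic integral being a genuine martingale after the localization) yields a recursive differential inequality $m_p'(t)\lesssim p^2\eps^{2\beta}\|\sigma\|_{L^\infty}^2\,m_{p-1}(t)$ for $m_p(t):=\ex[Y(t)^p]$, in which the random-forcing contributions are estimated through $\cE$ and absorbed by a Gr\"onwall argument — this absorption is what produces the factor $\e^T$ (the resulting bound being deliberately not sharp in $T$, as is unavoidable with this $\cE$-based method). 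Unrolling the recursion, the product of the $p$ coefficients is $\lesssim(\mathrm{const}\cdot\eps^{2\beta}\|\sigma\|_{L^\infty}^2)^p(p!)^2$, while the nested time-integrations contribute a compensating $T^p/p!$, so the noise part of $m_p(T)$ is $O\big((\mathrm{const})^p p^p\,T^{p+1}\eps^{2\beta p}\|\sigma\|_{L^\infty}^{2p}\big)\e^T$ (the factorial cancellation turning $(p!)^2/p!\sim p^p$ by Stirling, and one more time-integration upgrading $T^p$ to $T^{p+1}$); one then passes to $\ex[\sup_{t\le T}Y^p]$ by the Burkholder--Davis--Gundy inequality (again using $\langle M\rangle_T\lesssim\eps^{2\beta}\|\sigma\|_{L^\infty}^2\int_0^T Y\,\d s$) and splits the decomposition $Y=\cE_0+M_t+(\text{drift})$ into three pieces at the cost of the $3^{p-1}$, arriving at \eqref{eq:orig_poly_bound}. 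The power $p^p$ (not $p^{2p}$) is exactly what the linear control of $\langle M\rangle$ buys; tracking the numerical constants through the recursion, Stirling's formula, the BDG step, and the $3^{p-1}$ splitting gives $C_p=O(1296^p p^p)$, and the hypothesis $T\ge1$ serves only to absorb lower powers of $T$.

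Part (ii) is then pure summation. Because $Y\ge0$, monotone convergence gives $\ex[\exp(\eta\sup_{t\le T}Y)]=\sum_{p\ge0}\tfrac{\eta^p}{p!}\ex[\sup_{t\le T}Y^p]$; inserting \eqref{eq:orig_poly_bound} splits this into
\[
\frac{\e^T}{3}\sum_{p\ge0}\frac{(3\eta)^p}{p!}\ex[\cE_0^p]\ =\ \frac{\e^T}{3}\,\ex\big[\e^{3\eta\cE_0}\big] \qquad\text{and}\qquad \e^T\,T\sum_{p\ge1}\frac{C_p}{p!}\big(\eta T\eps^{2\beta}\|\sigma\|_{L^\infty}^2\big)^p .
\]
The first sum is exactly the first term of \eqref{eq:orig_expo_bound}. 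For the second, $C_p=O(1296^p p^p)$ and Stirling's bound $p^p/p!\le\e^p$ show it is dominated by a geometric series of ratio of order $1296\,\e\cdot\eta T\eps^{2\beta}\|\sigma\|_{L^\infty}^2$; the constant $4000>1296\,\e$ in the definition \eqref{def:eq_eta_0} of $\eta_0$ guarantees that, for $\eta<\eta_0(\eps,T)$, this ratio is $<\eta/\eta_0<1$, so the series converges and is $\le C(1-\eta/\eta_0)^{-1}$, which is the second term of \eqref{eq:orig_expo_bound}.

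I expect the main obstacle to be the first step. The functional $\cE$ is non-standard — it carries $\pv^{-1}$, $\pv^{-3}$ and the weight $\pv^{(\gamma-3)/2}$ — so It\^o's formula for it along the low-regularity strong solution must be justified by the regularization and localization above; and, more substantively, one must verify that the \emph{deterministic} part of the balance produces exactly the dissipation $\tfrac12\|(\uv)_x\|_{L^2}^2+\tfrac1{2\eps^2}\|\pv^{(\gamma-3)/2}(\pv)_x\|_{L^2}^2$ (with no uncontrolled leftover) for \emph{every} $\gamma\ge1$, which is the genuinely new point relative to the isothermal case of \cite{michele}. Everything after the energy balance is standard stochastic-analysis bookkeeping — the only delicate part being to keep the constants in (i) sharp enough (the $1296$) for the series in (ii) to converge on the announced range $\eta<\eta_0(\eps,T)$.
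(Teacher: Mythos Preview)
Your identification of the energy balance for $\cE$, the control of the It\^o correction by the mass constraint, and the bound $\d\langle M\rangle_t\lesssim\eps^{2\beta}\|\sigma\|_{L^\infty}^2\,\cE\,\d t$ via Proposition~\ref{prop:property_modified_energy} are exactly what the paper uses, and your treatment of part~(ii) by summation and Stirling coincides with the paper's.

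For part~(i), however, the paper takes a more direct route than your moment recursion. It does \emph{not} apply It\^o's formula to $Y^p$ or unroll a hierarchy $m_p'\lesssim p^2 a\,m_{p-1}$. Instead it writes $\sup_{t\le T} Y(t)\le \cE_0+\sup_{t\le T}(\text{drift})+\sup_{t\le T}|M_t|$, takes the $p$-th power with the $3^{p-1}$ split \emph{at the outset}, bounds the drift deterministically, and handles the martingale by a single application of BDG followed by $\langle M\rangle_T\lesssim\eps^{2\beta}\|\sigma\|_{L^\infty}^2\int_0^T\cE\,\d s$, H\"older in time, and Young's inequality to produce a term $3^{-(p-1)}\ex\big[\int_0^T\cE^p\,\d t\big]$ plus an explicit constant. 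Since $\cE(t)\le Y(t)\le\sup_{s\le t}Y(s)$, this closes into an integral inequality for $Z(T):=\ex[\sup_{t\le T}Y^p]$ of the form $Z(T)\le\int_0^T Z(t)\,\d t+\cdots$, and one Gr\"onwall step gives the factor $\e^T$ and the constant $C_p\sim\tfrac{1296^p}{36}G_p^2=O(1296^pp^p)$. Your recursion would also succeed and is the standard CIR-type moment argument, but your narrative is slightly muddled: the inequality $m_p'\lesssim p^2 a\,m_{p-1}$ is solved by iterated integration, not Gr\"onwall, and on that route the $\e^T$ does not appear at the fixed-time level---it would have to be reintroduced at the BDG step when you pass to the supremum and close an inequality on $\ex[\sup_{t\le T}Y^p]$. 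The paper's ordering (split first, BDG once, Young, single Gr\"onwall) makes the origin of $\e^T$ and the arithmetic producing $1296=144\cdot 9$ transparent, whereas your recursion buys a cleaner combinatorial picture of the $p^p$ growth at the cost of a second pass to recover the supremum.
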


\begin{proof}
For the first estimate, we employ the It\^{o}'s formula to perform a direct computation
to obtain the energy balance:
\begin{align}\label{eq:modified_energy_balance}
&\d \cE(\p_\eps, u_\eps)
+\frac{1}{2} \Big(\intx |(\uv)_x|^2 \d x  + \frac{1}{\eps}\intx \pv^{\gamma-3}|(\pv)_x|^2 \d x\Big) \d t \nonumber\\
&= \frac{1}{2}\eps^{2\beta}\intx\pv \sigma^2 \d x \d t + \eps^\beta \intx \Big(\pv \uv + \frac{1}{2} \frac{(\pv)_x}{\pv}\Big) \sigma \d x \d W_t.
\end{align}
Note that the last two terms are interpreted as
\begin{align*}
& \intx\pv \sigma^2 \d x \d t = \sum_{k=1}^\infty \intx \pv \sigma_k^2 \d x \d t,\\
& \intx \Big(\pv \uv + \frac{1}{2} \frac{(\pv)_x}{\pv}\Big) \sigma \d x \d W_t
  =  \sum_{k=1}^\infty \intx \Big(\pv \uv + \frac{1}{2} \frac{(\pv)_x}{\pv}\Big) \sigma_k(x) \d x \d W^k_t;
\end{align*}
also see \cite{onedns} for the similar computation in the deterministic case without the It\^{o}'s correction
term and \cite{michele} for the computation in the stochastic case with $\gamma=1$.
Therefore, for all $p \in [2, \infty)$ and $T \geq 1$, it holds almost surely that
\begin{align*}
&\bigg(\sup_{0\leq t \leq T} \Big(\cE(\p_\eps, u_\eps)
+\frac{1}{2} \intt \intx |(\uv)_x|^2 \d x \d s
+ \frac{1}{2\eps^2}\intt \intx \pv^{\gamma-3}|(\pv)_x|^2 \d x \d s \Big) \bigg)^p\\
&=\bigg(\sup_{0\leq t \leq T} \Big( \cE(\p_{0, \eps}, u_{0, \eps})
   +  \frac{\eps^{2\beta}}{2}\intt\intx\pv \sigma^2 \d x \d s
+ \eps^\beta \intt \intx \big(\pv \uv + \frac{1}{2} \frac{(\pv)_x}{\pv}\big) \sigma \d x \d W_s \Big) \bigg)^p\\
&\leq 3^{p-1} \bigg(\cE(\p_{0, \eps}, u_{0, \eps})^p+\sup_{0\leq t \leq T} \Big(\frac{\eps^{2\beta}}{2}\intt\intx\pv \sigma^2 \d x \d s\Big)^p
\\&
\qquad \qquad
+ \sup_{0\leq t \leq T} \Big(\eps^\beta \intt \intx \big(\pv \uv + \frac{1}{2} \frac{(\pv)_x}{\pv}\big) \sigma \d x \d W_s \Big)^p \bigg).
\end{align*}
We now estimate the last two terms in the bracket on the right-hand side. For the first term, we have
\begin{align*}
\ex \Big[ \sup_{0\leq t \leq T} \Big(\frac{\eps^{2\beta}}{2}\intT\intx\pv \sigma^2 \d x \d t\Big)^p\Big]
&= \frac{\eps^{2{p\beta}}}{2^p}\,\sup_{0\leq t \leq T}
\Big(\sum_{k=1}^\infty \intT\intx\pv \sigma_k^2 \d x \d t\Big)^p\\
&\leq \frac{\eps^{2{p\beta}}}{2^p}\,
\sup_{0\leq t \leq T}  \Big(\sum_{k=1}^\infty \|\sigma_k\|_{L^\infty}^2 \intT\intx\pv  \d x \d t\Big)^p\\
&\leq \frac{\eps^{2{p\beta}}}{2^p}\,\sum_{k=1}^\infty \|\sigma_k\|_{L^\infty}^{2p} T^p
\leq \frac{\eps^{2{p\beta}}}{2^p} \|\sigma\|_{L^\infty}^{2p} T^p.
\end{align*}
Next, we estimate the second term involving a (local) martingale.
For a local martingale $(N_t)_{t \geq 0}$, we write $(\langle N \rangle_t)_{t \geq 0}$
for its quadratic variation process.
Now, by the Burkholder-Davis-Gundy inequality, there exists $G_p >0$ such that
\begin{align*}
&\ex \bigg[ \sup_{0\leq t \leq T} \Big( \eps^\beta\intt \intx
\big(\pv \uv + \frac{1}{2} \frac{(\pv)_x}{\pv}\big) \sigma \d x \d W_s \Big)^p\bigg]\\
&=\ex \bigg[ \sup_{0\leq t \leq T} \Big( \sum_{k=1}^\infty \eps^\beta
\intt \intx \big(\pv \uv + \frac{1}{2} \frac{(\pv)_x}{\pv}\big) \sigma_k \d x \d W^k_s \Big)^p\bigg]\\
&\leq G_p \, \eps^{p \beta} \, \ex \bigg[\supt \Big\langle\sum_{k=1}^\infty \intt \intx
\big(\pv \uv + \frac{1}{2} \frac{(\pv)_x}{\pv}\big) \sigma_k \d x \d W^k_s \Big\rangle^{\frac{p}{2}}\bigg]\\
&= G_p \, \eps^{p \beta} \, \ex \bigg[\Big(\sum_{k=1}^\infty
\intT \Big(\intx \big(\pv \uv + \frac{1}{2} \frac{(\pv)_x}{\pv}\big) \sigma_k \d x \Big)^2 \d s \Big)^{\frac{p}{2}}\bigg]\\
&\leq G_p \,  \eps^{p \beta} \, \|\sigma\|_{L^\infty}^p \, \ex \bigg[\Big(\intT \intx\pv \uv^2  \d x \d t
  + \intT \intx \frac{|(\pv)_x|^2}{\pv^3} \d x \Big)^{\frac{p}{2}}\bigg]\\
&\leq 12 G_p \,  \eps^{p \beta} \, \|\sigma\|_{L^\infty}^p \, \ex \bigg[ \Big(\intT \cE(\p_\eps, u_\eps)\d t \Big)^{\frac{p}{2}}\bigg]\\
&\leq 12 G_p \,  \eps^{p \beta} \, T^{\frac{p}{2}-1} \|\sigma\|_{L^\infty}^p \, \ex \Big[\intT \cE(\p_\eps, u_\eps)^{\frac{p}{2}}\d t \Big]\\
&\leq 3^{-(p-1)} \ex \Big[ \intT \cE(\p_\eps, u_\eps)^{p}\d t \Big]
+ \frac{144^p}{4} 3^{p-1} G_p^2  T^{p-1} \eps^{2p\beta} \|\sigma\|_{L^\infty}^{2p}.
\end{align*}
As a result, we have
\begin{align*}
&\ex\bigg[\sup_{0\leq t \leq T} \Big( \cE(\p_\eps, u_\eps)
+\frac{1}{2} \intt \intx |(\uv)_x|^2 \d x \d s
+ \frac{1}{2\eps^2}\intt \intx \pv^{\frac{\gamma-3}{2}}|(\pv)_x|^2 \d x \d s \Big)^p \bigg]\\
&\leq \ex \Big[ \intT \cE(\p_\eps, u_\eps)^{p}\d t \Big] + 3^{p-1} \ex \Big[\cE(\p_{0, \eps}, u_{0, \eps})^p\Big]
+ \Big(\frac{1296^p}{36}  G_p^2   T^{p-1}
+ \frac{3^{p-1}}{2^p} T\Big)T^{p-1} \eps^{2\beta p} \|\sigma\|_{L^\infty}^{2p}.
\end{align*}
The assertion now follows from the Gronwall inequality.

In order to
show the asymptotic estimate of the constant in \eqref{eq:orig_poly_bound}, we first observe that
\begin{equation*}
N_t := \eps^\beta\intt \intx \big(\pv \uv + \frac{1}{2} \frac{(\pv)_x}{\pv}\big) \sigma \d x \d W_s
\end{equation*}
is a continuous martingale. To see this, we compute similar to the above that,
for all $t \geq \tau \geq 0$ and $m \geq 2 $,
\begin{align*}
\ex \big[|N_t-N_\tau|^m \big]
&= \eps^{\beta m} \ex \bigg[ \Big( \sum_{k=1}^\infty \int_\tau^t \intx \big(\pv \uv + \frac{1}{2} \frac{(\pv)_x}{\pv}\big) \sigma_k \d x \d W^k_s \Big)^m\bigg]\\
&\leq G_m \, \eps^{\beta m} \,
\ex \bigg[\Big(\sum_{k=1}^\infty \int_\tau^t \intx \big(\pv \uv + \frac{1}{2} \frac{(\pv)_x}{\pv}\big) \sigma_k \d x \d W^k_s \bigg)^{\frac{m}{2}}\Bigg]\\
&\leq 12 G_m \,  \eps^{\beta m} \, \|\sigma\|_{L^\infty}^m \, \ex \bigg[ \Big(\int_\tau^t \cE(\p_\eps, u_\eps)\d r \Big)^{\frac{m}{2}}\bigg]\\
&\leq 12 G_m \,  \eps^{\beta m} \, \|\sigma\|_{L^\infty}^m \, (t-\tau)^{\frac{m}{2}}
\ex \bigg[ \Big(\sup_{ s \leq r \leq t} \cE(\p_\eps(t, \cdot), u_\eps(t, \cdot)) \Big)^{\frac{m}{2}}\bigg]\\
&= C_m \eps^{\beta m} (t-\tau)^{\frac{m}{2}} .
\end{align*}
Since $m\geq 2$ can be arbitrary large, by the Kolmogorov continuity theorem,
$N_t$ is almost surely H{\"o}lder continuous with H{\"o}lder exponent as any number less than $\frac{1}{2}$.
In particular, since $N_t$ is continuous, the asymptotic estimate of the constant in \eqref{eq:orig_poly_bound}
follows from the fact that $G_p = O(p^{\frac{p}{2}})$  as $p \to \infty$, thanks to \cite{BDG}.

We now show \eqref{eq:orig_expo_bound}.
Define
\begin{equation*}
Y_t = \sup_{0\leq t \leq T} \Big(\cE(\pv, \uv) + \frac{1}{2} \int_0^t \|(\uv)_x\|_{L^2}^2 \d s
+ \frac{1}{2\eps^2 } \int_0^t \|\pv^{\frac{\gamma-3}{2}}(\pv)_x\|_{L^2}^2 \d s\Big).
\end{equation*}
Then
\begin{align*}
\exp (\eta Y_t)
&= \sum_{k = 0}^\infty \frac{\eta^k }{k!}\ex \big[Y_t^k\big]\\
&\leq \sum_{k = 0}^\infty \frac{\eta^k }{k!} \Big(3^{k-1}  \e^t \,
   \ex \big[\cE(\p_{0, \eps},u_{0, \eps})^k\big]
   + C_k  \e^t t^{k+1} \eps^{2\beta k} \|\sigma\|^{2k}_{L^\infty}\Big)\\
&\leq  \frac{\e^t}{3} \sum_{k = 0}^\infty \frac{(3\eta)^k }{k!}
  \ex \big[\cE(\p_{0, \eps},u_{0, \eps})^k\big]
   + C t\e^t \sum_{k = 0}^\infty \frac{\big(1296t \eta \eps^{2\beta} \|\sigma\|^{2}_{L^\infty}\big)^k }{k!} k^k\\
&\leq  \frac{\e^t}{3} \ex \big[\exp\big(3 \eta \cE(\p_{0, \eps},u_{0, \eps})\big)\big]
 +  C t\e^t \sum_{k = 0}^\infty \big(4000t \eta  \eps^{2\beta} \|\sigma\|^{2}_{L^\infty}\big)^k\\
&\leq  \frac{\e^t}{3} \ex \big[\exp\big(3 \eta \cE(\p_{0, \eps},u_{0, \eps})\big)\big]
  + \frac{C t\e^t}{1-4000t \eta  \eps^{2\beta} \|\sigma\|^{2}_{L^\infty}},
\end{align*}
if $0 < \eta < \min\{\frac{\eta(\eps)}{3},\eta_0(\eps, T)\}$.
Note that we have used the Stirling's approximation $k^k \sim k!\, \e^k (2\pi k)^{-\frac{1}{2}}$
for large $k$ in the penultimate inequality.
This completes the proof.
\end{proof}

\begin{remark}
In the isothermal case $\gamma =1$, the estimates in {\rm Proposition \ref{prop:poly_and_exp_moments_bounds_NS}}
can be improved. In particular, the factor $\e^T$ in the first estimate can be replaced by a $T$-independent
constant and the smallness of $\eta$ needed in the second estimate can also be chosen to be independent of  $T$;
see \cite{michele}.
\end{remark}

\subsection{Estimates for the fluctuation system}\label{section:estimates_pertutrbation_system}
The energy estimates established for system \eqref{eq:NS} in \S \ref{sec:2.2}
constitute the basis for the analysis
of the fluctuation system \eqref{eq:nonlinear}.
The estimates here will be critical to establish
the finite-time convergence results in Theorem \ref{thm:finite_time_convergence}.
We first restate Proposition \ref{prop:existence_N-S} in the new variables $(\bpv, \buv)$ defined in \eqref{eq:devvar}.

\begin{proposition} \label{prop:existence_nonlinear}
Assume that $(\bp_{0, \eps}, \bu_{0, \eps})_{\eps \in (0, 1]}$ is a collection of
$\mathcal{F}_0$-measurable random variables in $H^1\times H^1_0$ such that
$$
\intx \bp_{0, \eps} \d x= 0, \quad 1+\eps^2 \bp_{0, \eps}>0 \qquad\,\,\,  \mbox{almost surely}.
$$
Then there exists a unique adapted strong pathwise solution $(\bpv, \buv)$ to \eqref{eq:nonlinear} such that
\begin{equation*}
\bpv \in C([0, \infty) \times [0, 1]) \cap L^\infty_{\mathrm{loc}}(0, \infty; H^1), \,\,\,\,
\buv \in C([0, \infty); H^1_0) \cap L^2_{\mathrm{loc}}(0, \infty; H^2) \qquad\mbox{almost surely}.
\end{equation*}
Moreover, the map: $t \mapsto \bpv(t, \cdot) \in H^1$ is weakly continuous and, for each $t >0$,
\begin{equation*}
(1+\eps^2\bpv(t, \cdot))^{-1} \in L^\infty(0, 1) \qquad\,\, \mbox{almost surely}.
\end{equation*}
\end{proposition}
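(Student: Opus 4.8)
The plan is to deduce everything from Proposition~\ref{prop:existence_N-S} by the change of variables \eqref{eq:devvar}, which for each fixed $\eps>0$ is an affine isomorphism on the relevant function spaces.

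First I would set $\p_{0,\eps}:=1+\eps^2\bp_{0,\eps}$ and $u_{0,\eps}:=\eps\bu_{0,\eps}$ and check that these data satisfy the hypotheses of Proposition~\ref{prop:existence_N-S}: $\p_{0,\eps}\in H^1$ since it differs from $\bp_{0,\eps}\in H^1(0,1)$ by the constant $1\in H^1(0,1)$; $u_{0,\eps}\in H^1_0$; both are $\mathcal{F}_0$-measurable; $\p_{0,\eps}=1+\eps^2\bp_{0,\eps}>0$ almost surely; and $\intx\p_{0,\eps}\d x=1+\eps^2\intx\bp_{0,\eps}\d x=1$ by the mean-zero hypothesis. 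Proposition~\ref{prop:existence_N-S} then furnishes a unique adapted strong pathwise solution $(\pv,\uv)$ of \eqref{eq:NS} with the regularity stated there, in particular with $\pv$ bounded away from zero on each compact time interval.

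Next, setting $\bpv:=\eps^{-2}(\pv-1)$ and $\buv:=\eps^{-1}\uv$, I would verify that $(\bpv,\buv)$ solves \eqref{eq:nonlinear}. The continuity equation transforms directly: dividing $\d\p+(\p u)_x\d t=0$ by $\eps^2$ and expanding $\big((1+\eps^2\bpv)\buv\big)_x=\eps^2(\bpv)_x\buv+(1+\eps^2\bpv)(\buv)_x$ gives the first equation of \eqref{eq:nonlinear}. For the momentum equation I would first pass to non-conservative form: since $\p$ solves a transport equation with no martingale part, the It\^o product rule gives $\d(\p u)=u\,\d\p+\p\,\d u=-u(\p u)_x\d t+\p\,\d u$ without any covariation term, and combined with $(\p u^2)_x=u_x(\p u)+u(\p u)_x$ this reduces the momentum balance to $\d u+uu_x\d t+\tfrac{\gamma}{\eps^2}\p^{\gamma-2}\p_x\d t=\p^{-1}u_{xx}\d t+\eps^{\beta}\sigma\,\d W_t$ (legitimate because $\pv>0$ on compact time sets); substituting $u=\eps\buv$, $\p=1+\eps^2\bpv$ and dividing by $\eps$ yields the second equation of \eqref{eq:nonlinear}. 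The boundary conditions \eqref{BC-1} follow from \eqref{BC-0}, and $\bpv(0,\cdot)=\eps^{-2}(\p_{0,\eps}-1)=\bp_{0,\eps}$, $\buv(0,\cdot)=\bu_{0,\eps}$.

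Finally, since $\p\mapsto\eps^{-2}(\p-1)$ and $u\mapsto\eps^{-1}u$ are, for fixed $\eps$, bounded linear bijections of $H^1(0,1)$ and $H^1_0(0,1)$ onto themselves, each regularity assertion in Proposition~\ref{prop:existence_nonlinear}---continuity in $(t,x)$, the $L^\infty_{\mathrm{loc}}(0,\infty;H^1)$ and $L^2_{\mathrm{loc}}(0,\infty;H^2)$ bounds, adaptedness, weak $H^1$-continuity in $t$, and $(1+\eps^2\bpv(t,\cdot))^{-1}=\pv(t,\cdot)^{-1}\in L^\infty(0,1)$---is the image under these maps of the corresponding assertion in Proposition~\ref{prop:existence_N-S}. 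Uniqueness follows by running the substitution backwards: two solutions of \eqref{eq:nonlinear} with data $(\bp_{0,\eps},\bu_{0,\eps})$ give, via $\p=1+\eps^2\bpv$ and $u=\eps\buv$, two solutions of \eqref{eq:NS} with the common data $(\p_{0,\eps},u_{0,\eps})$, hence coincide by Proposition~\ref{prop:existence_N-S}. I expect no genuine obstacle; the only step needing care is the passage between the conservative and non-conservative forms of the momentum equation, where one uses the absence of a quadratic covariation between $\p$ and $u$ together with the strict positivity of $\pv$ on compact time intervals.
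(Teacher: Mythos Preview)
Your proposal is correct and follows exactly the approach of the paper, which does not even spell out a proof but simply introduces the proposition as a restatement of Proposition~\ref{prop:existence_N-S} in the new variables $(\bpv,\buv)$ defined by \eqref{eq:devvar}. Your careful verification of the change of variables (including the passage to non-conservative form via It\^o's product rule) fills in the details the paper omits, but the underlying idea is the same.
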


We now further assume that \eqref{eq:assumption_moment_initial_poly} holds.
In particular, this implies that there exists $C>0$ such that, for all $p \geq 1$ and $\eps \in (0, 1]$,
for all $0 \leq \eta < \lambda^{-1} \eps^{-\alpha}$,
\begin{align} \label{eq:exponential_moment_initial}
\ex \left[\exp\big(\eta \mathcal{E}(1+\eps^2 \bp_{0, \eps} , \eps \bu_{0, \eps})\big) \right]
\leq \frac{C}{1-\eta \lambda \eps^\alpha}   \qquad \mbox{for any $p\geq 1, \eps \in (0, 1],
 \eta \in(0,\lambda^{-1} \eps^{-\alpha})$}.
\end{align}
In fact, the assumption is equivalent to asserting that
$\mathcal{E}(1+\eps^2 \bp_{0, \eps} , \eps \bu_{0, \eps})$ has an exponential tail.
That is, there exist $\tilde{C}, \tilde{\lambda}>0$ such that, for all $R >0$,
\begin{equation*}
\mathbb{P}\left[ \mathcal{E}(1+\eps^2 \bp_{0, \eps} , \eps \bu_{0, \eps})\geq R  \right]
\leq \tilde{C} \e^{-\frac{R}{\tilde{\lambda}\eps^\alpha}}.
\end{equation*}

\begin{remark}
The assumption on the initial data covers some important cases of interest.
For example, if the initial data are deterministic and bounded
in $H^1 \times H^1_0$ uniformly in $\eps \in (0, 1]$,
then \eqref{eq:assumption_moment_initial_poly} is satisfied with $\alpha =2$.
Also, as we will see later, for $\gamma = 1$,
if $(\bp_{0, \eps}, \bu_{0, \eps})$ is a statistically stationary solution of
the fluctuation system \eqref{eq:nonlinear}, then the assumption is satisfied with $\alpha = 2 \beta$.
\end{remark}

We now quantify better the estimates of solutions of system \eqref{eq:nonlinear}.

\begin{proposition} \label{prop:expectation_non_linear}
Let $T \geq 1$, and let $(\bp_{0, \eps}, \bu_{0, \eps})$ be
satisfying \eqref{eq:assumption_moment_initial_poly} as above. Then

\begin{enumerate}
\item[\rm (i)]\label{item:densbd}
For all $p\geq 1$, there exists $C>0$, independent of $(\eps, \alpha, \beta)$, such that
\begin{align} \label{eq:density_bounds_infty_expectation}
&\ex \Big[\max \big\{\supt \|(1+\eps^2 \bpv)^p\|_{L^\infty}, \supt \|(1+\eps^2 \bpv)^{-p}\|_{L^\infty}\big\}\Big] \leq C
\end{align}
for all $\eps\in (0, \eps_0]$ for some $\eps_0>0$ depending only on $p, T, \alpha, \beta$, and $\|\sigma\|_{L^\infty}$.

\smallskip
\item[\rm (ii)]\label{item:solbd}
For all $p \geq 1$, there exists $C_p = C_p (p, \gamma, \lambda, T)>0$, independent of $(\alpha, \beta, \eps)$,
such that, for all $\eps\in (0, \eps_0]$ $($ for $\eps_0>0$ depending on $p, T, \alpha, \beta$,
and $\|\sigma\|_{L^\infty}${\rm )},
\begin{align}
& \ex \bigg[ \Big(\supt\left( \|\bpv\|_{L^2}^2+\|\buv\|_{L^2}^2+\eps^2 \|(\bpv)_x\|_{L^2}^2\right) \d x
+\intT \big(\|(\bpv)_x \|^2_{L^2} + \|(\buv)_x\|_{L^2}^2\big)  \d t\Big)^p\bigg] \nonumber\\[1mm]
&\leq C_p \big(\eps^{(\alpha-2)p} + \eps^{(2\beta-2)p} \big), \label{eq:enerone}
\end{align}

\smallskip
\item[\rm (iii)]\label{item:expbd} There exists $C= C(T)$ such that, for all $\eps\in (0, 1]$ and $\delta>0$ with
\begin{align} \label{eq:expectation_exp_u_delta}
\delta<\min\big\{\frac{1}{6\lambda \eps^{\alpha-2}},\,\frac{\eta_0(\eps, T)\eps^2}{2}\big\}
\end{align}
and $\eta_0$ defined in {\rm Proposition \ref{prop:poly_and_exp_moments_bounds_NS}},
\begin{equation}\label{2.8a}
\ex \Big[\exp \Big( \delta\intT \intx \buv^2 \d x  \d t\Big) \Big]
\leq C.
\end{equation}
\end{enumerate}
\end{proposition}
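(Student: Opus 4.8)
The plan is to run everything through the substitution $(\pv,\uv)=(1+\eps^2\bpv,\,\eps\buv)$, under which $(\bpv,\buv)$ solves \eqref{eq:nonlinear} precisely when $(\pv,\uv)$ solves \eqref{eq:NS} and $\cE(\pv,\uv)=\cE(1+\eps^2\bpv,\eps\buv)$; this imports Propositions~\ref{prop:property_modified_energy}, \ref{prop:poly_and_exp_moments_bounds_NS}, and \eqref{eq:assumption_moment_initial_poly}--\eqref{eq:exponential_moment_initial} verbatim. I abbreviate $Q_\eps(t):=\cE(\pv,\uv)(t)+\tfrac12\int_0^t\|(\uv)_x\|_{L^2}^2\d s+\tfrac1{2\eps^2}\int_0^t\|\pv^{(\gamma-3)/2}(\pv)_x\|_{L^2}^2\d s$, so that $Q_\eps(t)\ge\cE(\pv,\uv)(t)\ge0$, $Q_\eps(t)$ dominates each of its three nonnegative summands, and Proposition~\ref{prop:poly_and_exp_moments_bounds_NS} controls the polynomial and exponential moments of $\supt Q_\eps$. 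For (i), Proposition~\ref{prop:property_modified_energy}(v) gives $\supt\|\pv^{\pm p}\|_{L^\infty}\le\exp\!\big(p\sqrt{8\,\supt Q_\eps}\big)$; the elementary bound $p\sqrt{8x}\le x+2p^2$ upgrades this to $\supt\|\pv^{\pm p}\|_{L^\infty}\le \e^{2p^2}\exp(\supt Q_\eps)$, and I then take expectations and apply Proposition~\ref{prop:poly_and_exp_moments_bounds_NS}(ii) with $\eta=1$, using \eqref{eq:exponential_moment_initial} to control the resulting $\ex[\exp(3\cE(1+\eps^2\bp_{0,\eps},\eps\bu_{0,\eps}))]$; this needs only $1<\eta_0(\eps,T)$ and $3\lambda\eps^\alpha<1$, i.e.\ $\eps\le\eps_0$. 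The very same computation in fact bounds every moment of $\bar M:=\supt\max\{\|\pv\|_{L^\infty},\|\pv^{-1}\|_{L^\infty}\}$ uniformly over such $\eps$, which I will use below.

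For (iii): since $\uv=\eps\buv\in H^1_0$, the Poincar\'e inequality gives $\int_0^T\!\intx\buv^2\d x\d t=\eps^{-2}\int_0^T\|\uv\|_{L^2}^2\d t\le\pi^{-2}\eps^{-2}\int_0^T\|(\uv)_x\|_{L^2}^2\d t\le2\pi^{-2}\eps^{-2}\supt Q_\eps$, using that $\tfrac12\int_0^T\|(\uv)_x\|_{L^2}^2$ is one of the summands of $Q_\eps(T)\le\supt Q_\eps$. Hence, with $\eta_\ast:=2\pi^{-2}\delta\eps^{-2}$,
\[
\ex\Big[\exp\Big(\delta\int_0^T\!\intx\buv^2\d x\d t\Big)\Big]\ \le\ \ex\big[\exp(\eta_\ast\supt Q_\eps)\big],
\]
and hypothesis \eqref{eq:expectation_exp_u_delta} is exactly what makes $\eta_\ast<\pi^{-2}\eta_0(\eps,T)$ and $3\eta_\ast<\pi^{-2}\lambda^{-1}\eps^{-\alpha}$, so the denominators appearing in Proposition~\ref{prop:poly_and_exp_moments_bounds_NS}(ii) and in \eqref{eq:exponential_moment_initial} stay above the fixed value $1-\pi^{-2}$; the right-hand side is then bounded by a $T$-dependent constant, giving \eqref{2.8a} for all $\eps\in(0,1]$.

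The substantive step is (ii). Writing $\pv-1=\eps^2\bpv$, $(\pv)_x=\eps^2(\bpv)_x$, $\uv=\eps\buv$, and combining Proposition~\ref{prop:property_modified_energy}(i)--(iii) with the Taylor coercivity $H(\pv)=\eps^{-2}h(\pv)\ge c_{\bar M}\,\eps^{-2}(\pv-1)^2$ on $\{\pv\in[\bar M^{-1},\bar M]\}$ (valid since $h(1)=h'(1)=0$ and $h''>0$, whence $c_{\bar M}^{-1}$ is a power of $\bar M$), I obtain the pointwise comparisons
\[
\|\bpv(t)\|_{L^2}^2+\|\buv(t)\|_{L^2}^2+\eps^2\|(\bpv)_x(t)\|_{L^2}^2\ \lesssim\ \bar M^{\,c}\eps^{-2}\cE(\pv,\uv)(t),
\]
\[
\int_0^T\!\big(\|(\bpv)_x\|_{L^2}^2+\|(\buv)_x\|_{L^2}^2\big)\d t\ \lesssim\ \bar M^{\,c}\eps^{-2}\supt Q_\eps,
\]
with an absolute exponent $c=c(\gamma)$ (the $(\bpv)_x$-terms absorb $\|\pv^3\|_{L^\infty}$ and $\|\pv^{3-\gamma}\|_{L^\infty}$ through Proposition~\ref{prop:property_modified_energy}(ii) and the $\pv^{(\gamma-3)/2}(\pv)_x$ dissipation term, while the $(\buv)_x$-terms come for free from $\tfrac12\int\|(\uv)_x\|^2$). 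Thus the bracket $Z_\eps$ in \eqref{eq:enerone} satisfies $Z_\eps\lesssim\bar M^{c}\eps^{-2}\supt Q_\eps$, and H\"older's inequality with a fixed exponent pair $(r,r')$ gives $\ex[Z_\eps^{\,p}]\lesssim\eps^{-2p}(\ex[\bar M^{\,cpr'}])^{1/r'}(\ex[(\supt Q_\eps)^{pr}])^{1/r}$; the first factor is $O(1)$ by (i), while Proposition~\ref{prop:poly_and_exp_moments_bounds_NS}(i) and \eqref{eq:assumption_moment_initial_poly} give $\ex[(\supt Q_\eps)^{pr}]\lesssim(pr)!\,\lambda^{pr}\eps^{\alpha pr}+C_{pr}\eps^{2\beta pr}$, so $(\ex[(\supt Q_\eps)^{pr}])^{1/r}\lesssim\eps^{\alpha p}+\eps^{2\beta p}$ and finally $\ex[Z_\eps^{\,p}]\lesssim\eps^{(\alpha-2)p}+\eps^{(2\beta-2)p}$, which is \eqref{eq:enerone}. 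The main obstacle is precisely this pointwise comparison: the $\cE$--to--fluctuation equivalence is quantitative only once the density is pinched between $\bar M^{-1}$ and $\bar M$, and $\bar M$ is random, which is why one cannot avoid the exponential-moment bound of Proposition~\ref{prop:poly_and_exp_moments_bounds_NS}(ii) behind (i), nor replace H\"older by a deterministic constant when passing to expectations; the $\gamma$-dependent powers of $\pv^{\pm1}$ produced by the degenerate dissipation $\pv^{(\gamma-3)/2}(\pv)_x$ are a further bookkeeping nuisance but affect only the harmless exponent $c(\gamma)$, not the $\eps$-rate.
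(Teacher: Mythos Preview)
Your proposal is correct and follows essentially the same route as the paper: reduce to the original variables $(\pv,\uv)$, feed Propositions~\ref{prop:property_modified_energy} and \ref{prop:poly_and_exp_moments_bounds_NS} together with the moment hypothesis \eqref{eq:assumption_moment_initial_poly}--\eqref{eq:exponential_moment_initial}, and peel off the $\eps$-powers. The only cosmetic differences are that for (i) the paper replaces your AM--GM step $p\sqrt{8x}\le x+2p^2$ by a Jensen argument with the concave function $\mu(x)=\exp((\log x)^{1/2})$, and for (ii) the paper estimates the five summands of $Z_\eps$ one at a time (each via Cauchy--Schwarz with a density-power factor against an energy factor) rather than packaging them into a single $Z_\eps\lesssim\bar M^{c}\eps^{-2}\supt Q_\eps$ bound before applying H\"older; the underlying mechanism and the resulting rates are identical.
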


\begin{proof} We divide the proof into three steps.

\smallskip
1. We first go back to the original variables $\pv = 1+\eps^2 \bpv$ and $\uv = \eps \buv$, which solve \eqref{eq:NS}
with the initial data $(1+\eps^2 \bp_{0, \eps}, \eps \bu_{0, \eps})$.

Fix $p \geq 1$. To prove \eqref{eq:density_bounds_infty_expectation},
since $\alpha, \beta >0 $, there exists $\eps_0 >0$ such that, for all $\eps \in (0, \eps_0)$,
$8p^2 \leq \min\{(6\lambda \eps^\alpha)^{-1} , \frac{\eta_0(\eps, T)}{2}\}$, where $\eta_0(\eps, T)$ is defined in \eqref{def:eq_eta_0}.
For all such $\eps$, by Proposition \ref{prop:property_modified_energy},
we have
\begin{align*}
\max\Big\{\supt \|\pv^p\|_{L^\infty},\, \supt \|\pv^{-p}\|_{L^\infty}\Big\}
\leq \exp \Big(\big(\supt 8p^2 \cE(\pv, \uv)\big)^{\frac{1}{2}}\Big).
\end{align*}
We note that the function $\mu : [1, \infty) \to [1, \infty)$, defined
by $\mu (x) = \exp((\log x)^{\frac{1}{2}})$ for all $x \geq 1$, is concave and strictly increasing.
Therefore, we have
\begin{align*}
&\max \Big\{\supt \|\pv^p\|_{L^\infty}, \,\supt \|\tp^{-p}\|_{L^\infty}\Big\} \\
&\leq \ex\Big[\exp \Big(\big(\supt 8p^2 \cE(\pv, \uv)\big)^{\frac{1}{2}}\Big)\Big]\\
&= \ex \big[\mu( \exp (\supt 8p^2 \cE(\pv, \uv)))\big]\\
&\leq \mu(\ex [ \exp (\supt 8p^2\cE(\pv, \uv))])\\
&\leq \mu(\frac{\e^T}{3} \ex [\exp(\frac{3}{6\lambda \eps^\alpha} \cE(\p_{0, \eps},u_{0, \eps}))]
+\frac{C T\e^T}{1-(\eta_0(\eps, T)/2) / \eta_0(\eps, T)})\\
&= \mu(\frac{\e^T}{3} \ex [\exp(\frac{1}{2\lambda \eps^\alpha} \cE(\p_{0, \eps},u_{0, \eps}))]+2C T\e^T)\leq C,
\end{align*}
by means of \eqref{eq:orig_expo_bound} and \eqref{eq:exponential_moment_initial}.

\smallskip
2. We now tackle \eqref{eq:enerone} and start
by estimating  $\ex[\|\bpv\|_{L^\infty(0, T; L^2)}^{2p}]$.
By Taylor's theorem, for all $z \in \mathbb{R}$, there exists $\xi$ between $1$ and $1+z$ such that
\begin{align*}
    H(1+z) = H(1)+H'(1) z + \frac{H''(\xi)}{2}z^2.
\end{align*}
It follows from
the fact that $H''$ is monotone with $H''(1)= \frac{\gamma}{\eps^2}$ that
\begin{align}
\intx H(1+\eps^2 \bpv) \d x
\geq b(t)
\intx \eps^4 \bpv^2 \d x \label{eq:taylor_H}
\end{align}
with
\begin{align}
b(t) := \frac{1}{2} \min \big\{\frac{\gamma}{\eps^2}, \,\inf_{x\in [0, 1]} H''(1+\eps^2 \bpv)\big\}.
\end{align}
Note that, by \eqref{eq:density_bounds_infty_expectation}, there exists $C_{p, \gamma}>0$ such that,
for all sufficiently small $\eps>0$,
\begin{align*}
\ex \bigg[\Big(\supt b(t)^{-2p}\Big)^{\frac{1}{2}}\bigg] \leq C_{p, \gamma} \eps^{2p}.
\end{align*}
We use \eqref{eq:taylor_H} to compute that
\begin{align*}
\ex \bigg[\Big(\supt \intx \eps^4 \bpv^2 \d x  \Big)^p \bigg]
&\leq \ex \bigg[\Big(\supt  b(t)\intx \eps^4 \bpv^2 \d x  \Big)^{p}\bigg]
  \ex \bigg[\Big(\supt b(t)^{-2p}\Big)^{\frac{1}{2}}\bigg]\\
&\leq C\ex \bigg[\Big(\supt  \cE(\pv, \uv) \Big)^{p}\bigg]
  \ex \bigg[\Big(\supt b(t)^{-2p}\Big)^{\frac{1}{2}}\bigg]\\
&\leq C_{p, \gamma}\eps^2 \left(3^{2p-1}  \e^T \,
\ex [\cE(\p_{0, \eps},u_{0, \eps})^{2p}] + C_{2p} \, \e^T \, T^{2p+1} \eps^{4\beta p} \|\sigma\|^{4p}_{L^\infty}\right)^{\frac{1}{2}}\\
&\leq C_{p, \gamma}\eps^{2p}\big(\eps^{2p \alpha} + \eps^{4p\beta} \big)^{\frac{1}{2}} \\
&\leq C_{p, \gamma}\eps^{2p} \big(\eps^{p\alpha} + \eps^{2p\beta} \big),
\end{align*}
from which the bound on $\ex [\|\bpv\|_{L^\infty(0, T; L^2)}^{2p}]$ follows.
Regarding $\buv$, we similarly have
\begin{align*}
&\ex \bigg[\Big(\supt \eps^2\intx \buv^2 \d x \Big)^p\bigg]  \\
&\leq \ex \bigg[\Big(\supt \inf_{x \in [0,1]} (1+\eps^2 \bpv)\intx \eps^2 \buv^2 \d x  \Big)^{p}\bigg]
 \ex \bigg[\Big(\supt\big(\displaystyle\inf_{x \in [0,1]} (1+\eps^2 \bpv)\big)^{-1}\Big)^{p}\bigg]\\
&\leq C_p\bigg(\ex \Big[\Big(\supt  \cE(\pv, \uv)\Big)^{\!2p}\Big]\bigg)^{\frac{1}{2}}
\bigg(\ex \Big[\Big(\supt \big(\displaystyle \inf_{x \in [0,1]} (1+\eps^2 \bpv))^{-1}\Big)^{\!2p}\Big]\bigg)^{\frac{1}{2}}\\
&\leq C_p\big(\eps^{p\alpha} + \eps^{2p\beta} \big),
\end{align*}
giving the required bound on $\ex \|\buv\|_{L^\infty(0, T; L^2)}^{2p}$.
To complete the proof of \eqref{eq:enerone}, we need two more estimates on $(\bpv)_x$. The first is
\begin{align*}
\ex \bigg[ \Big( \intT \intx \eps^4|(\bpv)_x|^2 \d x \d t\Big)^p \bigg]
&\leq \ex \bigg[ \Big(\supt \|\pv^{3-\gamma}\|_{L^\infty}\Big)^p \Big(\intT \intx \pv^{\gamma-3}|(\pv)_x|^2\d x\d t\Big)^p\bigg]\\
&\leq  \bigg(\ex\Big[ \Big(\supt \|\pv^{3-\gamma}\|_{L^\infty}\Big)^{2p}\Big]\bigg)^{\frac{1}{2}}
 \bigg(\ex \Big[\Big(\intT \intx \pv^{\gamma-3} (\pv)_x^2 \d x \d t \Big)^{2p} \Big]\bigg)^{\frac{1}{2}}\\
&\leq C_p \big(\eps^{p\alpha} + \eps^{2p\beta} \big),
\end{align*}
while the second is
\begin{align*}
\ex \bigg[\Big(\supt \intx \eps^4 |(\bpv)_x|^2 \d x  \Big)^p\bigg]
&= \ex \bigg[\Big(\supt \intx |(\pv)_x|^2 \d x  \Big)^p \bigg]\\
&\leq \ex \bigg[\Big( \supt \|\pv\|_{L^\infty}^3 \intx \frac{|(\pv)_x|^2}{\pv^3} \d x  \Big)^p \bigg]\\
&\leq 8 \ex \bigg[\Big( \supt \|\pv\|_{L^\infty}^3 \cE (\pv, \uv)\Big)^p\bigg]\\
&\leq 8\bigg( \ex \Big[  \supt \|\pv\|_{L^\infty}^{6p}\Big]\bigg)^{\frac{1}{2}}
\bigg( \ex \Big[ \supt \cE (\pv, \uv)^{2p}\Big]\bigg)^{\frac{1}{2}}\\
&\leq C\big(\eps^{p\alpha} + \eps^{2p\beta} \big).
\end{align*}
We observe that, using estimate \eqref{eq:orig_poly_bound} again, we have
\begin{align*}
\ex \bigg[ \Big(\intT \intx \eps^2 |(\buv)_x|^2 \d x \d t\Big)^p \bigg]
= \ex \bigg[ \Big(\intT \intx |(\uv)_x|^2 \d x \d t \Big)^p\bigg]
\leq C_p \big(\eps^{p\alpha} + \eps^{2p\beta} \big),
\end{align*}
which gives the desired bound on $(\buv)_x$.

\smallskip
3. Finally, we turn to \eqref{2.8a},
and fix any $\delta>0$ complying with \eqref{eq:expectation_exp_u_delta}.
Then, by \eqref{eq:orig_expo_bound},
we have
\begin{align*}
\ex \Big[\exp ( \delta\intt \intx \buv^2 \d x  \d t) \Big]
&=\ex \Big[\exp(\frac{\delta}{\eps^2} \intt \intx u^2 \d x  \d t) \Big] \\
&\leq \frac{\e^T}{3} \ex \Big[\exp( \frac{3\delta}{\eps^2} \cE(\p_{0, \eps},u_{0, \eps}))\Big]
+\frac{C T\e^T}{1-\delta / (\eps^2 \eta_0(\eps, T))}\\
&\leq \frac{C \e^T}{3(1-3\delta\lambda \eps^{\alpha-2})}+\frac{C T\e^T}{1-\delta / (\eps^2 \eta_0(\eps, T))} \leq C.
\end{align*}
This completes the proof.
\end{proof}

\section{Analysis of the Acoustic System} \label{section:acoustic}

The starting point of our analysis of the acoustic system  \eqref{linear} is the existence and uniqueness of solutions
with the initial data  $(r_{0, \eps}, v_{0, \eps})$,
which are proved directly by using again a pathwise transformation:
$$
\hat{r}_\eps = \rv - \sum_{k = 1}^\infty \int_0^t\eps^{\beta-2} \sigma_k\, \d W^k_s.
$$

\begin{proposition} \label{prop:existence_linear}
Assume that $(r_{0, \eps}, v_{0, \eps})_{\eps \in (0, 1]}$ is a collection of $\mathcal{F}_0$-measurable
random variables in $H^1\times H^1_0$ such that
$$
\intx r_{0, \eps} \d x=0  \qquad \mbox{almost surely}.
$$
Then there exists a unique adapted strong pathwise solution $(\rv, \vv)$  of system \eqref{linear} such that
\begin{equation*}
\rv \in C([0, \infty); H^1), \quad \vv \in C([0, \infty); H^1_0) \cap L^2_{\mathrm{loc}}(0, \infty; H^2)
\qquad\mbox{almost surely}.
\end{equation*}
\end{proposition}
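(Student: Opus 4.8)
The plan is to follow the pattern already used for \eqref{eq:NS} and \eqref{eq:nonlinear}: first remove the It\^o integral by a pathwise change of unknown, then solve the resulting deterministic linear problem for each fixed $\omega$, and finally transfer back. Throughout, $\eps\in(0,1]$ is fixed, so no $\eps$-uniformity is needed and all constants may depend on $\eps,\gamma,\beta,\sigma$.

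\emph{Step 1: pathwise reduction.} Since the noise in \eqref{linear} is additive, the process to subtract off is the elementary Wiener process $z_\eps(t):=\eps^{\beta-1}\sum_{k\geq1}\sigma_k W^k_t$; by \eqref{eq:sigma_sum_cond} its partial sums are Cauchy in $L^2(\Omega;C([0,T];H^2\cap H^1_0))$ for every $T$ (Doob's inequality, termwise), so $z_\eps$ has, almost surely, continuous paths with values in $H^2(0,1)\cap H^1_0(0,1)$. Putting $\hat v_\eps:=v_\eps-z_\eps$ and keeping $r_\eps$ unchanged (the noise does not touch the density equation), and using $\d z_\eps=\eps^{\beta-1}\sigma\,\d W_t$ together with the absence of cross-variation with the finite-variation part, I get that $(r_\eps,\hat v_\eps)$ solves, for a.e.\ $\omega$, the \emph{deterministic} linear problem
\begin{equation*}
\partial_t r_\eps+\tfrac1\eps(\hat v_\eps)_x=-\tfrac1\eps(z_\eps)_x,\qquad
\partial_t\hat v_\eps+\tfrac\gamma\eps(r_\eps)_x-(\hat v_\eps)_{xx}=(z_\eps)_{xx},
\end{equation*}
with data $(r_{0,\eps},v_{0,\eps})$, boundary condition \eqref{BC-2}, and right-hand sides in $C([0,T];H^1)$ and $C([0,T];L^2)$ respectively. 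Randomness now enters only through the $\mathcal F_0$-measurable data and the paths $(W^k)_{k\ge1}$.

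\emph{Step 2: the deterministic linear problem.} This is a constant-coefficient, partially dissipative linear system, which in one dimension is essentially explicit. The acoustic operator $\mathcal A(r,v)=(-\tfrac1\eps v_x,\,-\tfrac\gamma\eps r_x+v_{xx})$ leaves each two-dimensional Fourier mode $\mathrm{span}\{(\sqrt2\cos(k\pi x),0),(0,\sqrt2\sin(k\pi x))\}$, $k\ge1$, invariant, and the mean-zero constraint is preserved since $\ddt\intx r_\eps=-\tfrac1\eps\intx(\hat v_\eps)_x=0$. Expanding $r_\eps=\sum_k a_k(t)\sqrt2\cos(k\pi x)$, $\hat v_\eps=\sum_k b_k(t)\sqrt2\sin(k\pi x)$, each pair $(a_k,b_k)$ solves a $2\times2$ linear ODE with continuous forcing whose matrix has trace $-(k\pi)^2$ and determinant $\gamma(k\pi)^2/\eps^2>0$, hence two eigenvalues with strictly negative real part; I would solve these in closed form and then check that the resulting series converge in $C([0,T];H^1)$, in $C([0,T];H^1_0)$, and --- for the $v$-component --- in $L^2(0,T;H^2)$, the summability coming from \eqref{eq:sigma_sum_cond} for the forcing and from the dissipation for the homogeneous part. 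Equivalently, and more robustly, I would run the Galerkin scheme on the first $N$ modes and pass to the limit using the $L^2$- and $H^1$-energy estimates for the system (see the last paragraph); uniqueness then follows by applying the same energy estimate to the difference of two solutions. Structurally, the substitution $r_\eps=\phi_x$, $v_\eps=-\eps\phi_t$, legitimate precisely because $\intx r_\eps=0$, recasts \eqref{linear} as the strongly damped wave equation $\phi_{tt}=\tfrac\gamma{\eps^2}\phi_{xx}+\phi_{txx}$ with Dirichlet data, which is classically well posed in $H^1_0\times L^2$.

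\emph{Step 3: back-transformation, adaptedness, uniqueness, and where the care goes.} I set $v_\eps:=\hat v_\eps+z_\eps$; since $z_\eps\in C([0,\infty);H^2\cap H^1_0)$ almost surely, $(r_\eps,v_\eps)$ has exactly the regularity claimed, and re-adding $z_\eps$ turns the pathwise equation for $\hat v_\eps$ back into the It\^o equation of \eqref{linear} (the drift $(z_\eps)_{xx}$ recombines with $\d z_\eps=\eps^{\beta-1}\sigma\,\d W_t$). The solution map of the deterministic problem is continuous, hence measurable, from the data and the restriction of the driving paths to $[0,t]$ into $(r_\eps(t),v_\eps(t))$, so the latter is $\mathcal F_t$-measurable (with the explicit modal formulas this is transparent, each $b_k(t)$ being an It\^o integral against $(W^j)|_{[0,t]}$ and $a_k(t)$ a Riemann integral of $b_k$), and pathwise uniqueness for \eqref{linear} reduces to the uniqueness already obtained for the deterministic system. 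There is no real obstacle here --- the system is linear with constant coefficients --- but the one point requiring care is that $\mathcal A$ is only \emph{partially} dissipative, $\partial_x^2$ acting on $v$ but not on $r$, so there is no parabolic smoothing of the density and the $H^1$-regularity of $r_{0,\eps}$ must be propagated through Gr\"onwall rather than gained for free. The $H^1$-energy estimate nonetheless closes cleanly because the first-order coupling is skew-symmetric: differentiating the $r_\eps$-equation in $x$ and testing with $\gamma(r_\eps)_x$, and testing the $\hat v_\eps$-equation with $-(\hat v_\eps)_{xx}$, the cross terms $\mp\tfrac\gamma\eps\intx(r_\eps)_x(\hat v_\eps)_{xx}\,\d x$ cancel, and the boundary terms vanish because $\hat v_\eps\equiv0$ on $\{0,1\}$ forces $\partial_t\hat v_\eps\equiv0$ there too; what remains is a bound of the shape $\ddt\bigl(\gamma\|(r_\eps)_x\|_{L^2}^2+\|(\hat v_\eps)_x\|_{L^2}^2\bigr)\lesssim \gamma\|(r_\eps)_x\|_{L^2}^2+\|(\hat v_\eps)_x\|_{L^2}^2+\|(z_\eps)_{xx}\|_{L^2}^2$, which yields the asserted regularity.
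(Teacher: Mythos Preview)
Your approach is correct and is essentially the same as the paper's: remove the additive noise by a pathwise shift and solve the resulting deterministic linear system $\omega$-by-$\omega$. The paper gives only a one-line hint (the transformation formula, with no further argument), so your write-up is considerably more detailed; the only cosmetic difference is that you shift $v_\eps$ by $z_\eps=\eps^{\beta-1}\sum_k\sigma_k W^k_t$, whereas the paper's displayed formula shifts $r_\eps$ with coefficient $\eps^{\beta-2}$---but the underlying strategy is identical.
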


The energy structure of \eqref{linear} is captured by the energy functional:
\begin{equation*}
\mathcal{G}(r, v) = \intx \Big(\frac{1}{2}(\gamma r^2 + v^2) + \frac{1}{2}\eps v r_x
+ \frac{\eps^2}{4}|r_x|^2\Big)\d x,
\end{equation*}
which can be viewed as analogous to $\cE$ in \eqref{eq:def_modified energy}.

\subsection{Energy estimates}
We start by a series of the bounds on the solutions of \eqref{linear}.

\begin{proposition} \label{prop_poly_est_linear}
Let $(\rv, \vv)$ be the solution of system \eqref{linear} with initial data $(r_{0, \eps}, v_{0, \eps})$
as in {\rm Proposition \ref{prop:existence_linear}}, and let
\begin{equation*}
\kappa_\eps := \frac{1}{4\eps^{2\beta-2} \|\sigma\|_{L^\infty}^2}.
\end{equation*}
For all $R>0$, the following exponential martingale estimate holds{\rm :}
\begin{align}
&\mathbb{P}\Big[\sup_{t \geq 0} \Big(\mathcal{G}(\rv, \vv)
+ \frac{1}{4} \int_0^t \intx \big(|(\vv)_x|^2
+\gamma |(\rv)_x|^2\big) \d x\d s -\frac{1}{2} \eps^{2\beta-2} \|\sigma\|_{L^\infty}^2 t \Big)
- \mathcal{G}(r_{0, \eps}, v_{0, \eps}) \geq R \Big] \nonumber\\[1mm]
&\,\,\leq \e^{-\kappa_\eps R}. \label{eq:expmart}
\end{align}
As a consequence, for any $m \geq 1$, there exists $c_m>0$ independent of $\eps \in (0, 1]$
such that, for all $T \geq 1$,
\begin{align}
& \ex \Big[\sup_{0\leq t \leq T} \Big( \mathcal{G}(\rv, \vv)
+ \frac{1}{4} \int_0^t \intx \big(|(\vv)_x|^2+
|(\rv)_x|^2\big) \d x\d s \Big)^m \Big]\nonumber\\[1mm]
&\leq  c_m \left( \kappa_\eps^{-m}+  \eps^{2m\beta-2m} \|\sigma\|_{L^\infty}^{2m} T^m
+  \ex \left[\mathcal{G}(r_{0, \eps}, v_{0, \eps})^m \right]\right).\label{eq:polymart}
\end{align}
Moreover, for any $\delta\in (0, \frac{\kappa_\eps}{2})$,
there exist a constant $C_\delta>0$ independent of $\eps \in (0, 1]$ such that, for all $T \geq 1$,
\begin{align}
& \ex \Big[\exp \Big( \delta \supt\Big( \mathcal{G}(\rv, \vv)
+ \frac{1}{4} \int_0^t \intx \big(|(\vv)_x|^2
+\gamma |(\rv)_x|^2\big) \d x\d s\Big)\Big)\Big] \nonumber\\
&\leq  C_\delta \exp \big(\frac{\delta}{2}  \eps^{2\beta-2} \|\sigma\|_{L^\infty}^2 T\big)
\sqrt{ \ex\big[\exp \big(2\delta \mathcal{G}(r_{0, \eps}, v_{0, \eps})\big)\big]}.\label{eq:expmom}
\end{align}
\end{proposition}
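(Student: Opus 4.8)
The plan is to run an energy balance for $\mathcal{G}(\rv,\vv)$ via It\^o's formula, isolate the martingale part, and then pass from the resulting exponential supermartingale inequality to the three stated estimates in turn. First I would compute $\d\mathcal{G}(\rv,\vv)$. The structure of $\mathcal{G}$ mirrors that of $\cE$: the quadratic form $\frac12(\gamma r^2+v^2)+\frac12\eps v r_x+\frac{\eps^2}{4}|r_x|^2$ is (for $\eps\le 1$) comparable to $\gamma r^2+v^2+\eps^2|r_x|^2$, and the cross terms from $\frac{1}{\eps}(v_\eps)_x$ and $\frac{\gamma}{\eps}(r_\eps)_x$ in \eqref{linear} cancel in the energy balance, while the viscous term $(\vv)_{xx}$ produces the dissipation $\int_0^1(|(\vv)_x|^2+\gamma|(\rv)_x|^2)\d x$ (the $\gamma|(\rv)_x|^2$ piece coming from differentiating the $\frac12\eps v r_x$ and $\frac{\eps^2}{4}|r_x|^2$ terms and using the continuity equation). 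The It\^o correction from $\eps^{\beta-1}\sigma\,\d W_t$ contributes the deterministic drift $\frac12\eps^{2\beta-2}\|\sigma\|_{L^2}^2\le\frac12\eps^{2\beta-2}\|\sigma\|_{L^\infty}^2$ per unit time, and the martingale term is $M_t:=\eps^{\beta-1}\int_0^t\int_0^1(\vv+\tfrac12\eps r_x)\sigma\,\d x\,\d W_s$ (exactly the terms in $\mathcal{G}$ paired against $\vv$ through the noise). The key quadratic-variation bound is $\d\langle M\rangle_t\le 2\eps^{2\beta-2}\|\sigma\|_{L^\infty}^2\big(\int_0^1 v_\eps^2+\tfrac{\eps^2}{2}|r_x|^2\big)\d t\lesssim \eps^{2\beta-2}\|\sigma\|_{L^\infty}^2\,\mathcal{G}(\rv,\vv)\,\d t$, which is the mechanism that lets the noise be absorbed.

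For \eqref{eq:expmart}: setting $Z_t:=\mathcal{G}(\rv,\vv)+\frac14\int_0^t\int_0^1(|(\vv)_x|^2+\gamma|(\rv)_x|^2)\d x\,\d s-\frac12\eps^{2\beta-2}\|\sigma\|_{L^\infty}^2 t-\mathcal{G}(r_{0,\eps},v_{0,\eps})$, the energy balance gives $\d Z_t\le \d M_t-(\text{nonnegative dissipation not yet counted})$, so $Z_t$ is dominated by $M_t$; I would apply the exponential supermartingale (Doob/Novikov-type) bound $\mathbb{P}[\sup_{t\ge0}(M_t-\tfrac{\kappa_\eps}{2}\langle M\rangle_t)\ge R]\le\e^{-\kappa_\eps R}$, using $\langle M\rangle_t\le 4\eps^{2\beta-2}\|\sigma\|_{L^\infty}^2\int_0^t\mathcal{G}\,\d s$ so that $\tfrac{\kappa_\eps}{2}\langle M\rangle_t\le\tfrac12\int_0^t\mathcal{G}\,\d s$, and then observing that the $\int_0^t\mathcal{G}$ term is in turn controlled by the dissipation integral plus $\mathcal{G}(r_{0,\eps},v_{0,\eps})+\tfrac12\eps^{2\beta-2}\|\sigma\|^2_{L^\infty}t$ — i.e. by $Z_t$ itself up to constants, after choosing $\kappa_\eps$ as in the statement. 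Care with constants (Poincar\'e on $\vv\in H^1_0$ to dominate $\int v_\eps^2$ by $\int|(\vv)_x|^2$, and the mean-zero Poincar\'e for $\rv$ to dominate $\int r_\eps^2$ by $\int|(\rv)_x|^2$) is what pins down the numerical value $\kappa_\eps=(4\eps^{2\beta-2}\|\sigma\|_{L^\infty}^2)^{-1}$; this bookkeeping is the main obstacle, though it is essentially the same computation as in \cite{michele}.

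For \eqref{eq:polymart}: I would integrate the tail bound \eqref{eq:expmart} in $R$, i.e. use $\ex[X^m]=m\int_0^\infty R^{m-1}\mathbb{P}[X\ge R]\d R$ applied to $X=\big(Z_t+\mathcal{G}(r_{0,\eps},v_{0,\eps})+\tfrac12\eps^{2\beta-2}\|\sigma\|_{L^\infty}^2 t\big)_+$, which yields $\ex[\,\cdot\,]\le c_m(\kappa_\eps^{-m}+\ex[\mathcal{G}(r_{0,\eps},v_{0,\eps})^m]+\eps^{2m\beta-2m}\|\sigma\|_{L^\infty}^{2m}T^m)$ after bounding $\sup_{0\le t\le T}(\tfrac12\eps^{2\beta-2}\|\sigma\|_{L^\infty}^2 t)=\tfrac12\eps^{2\beta-2}\|\sigma\|_{L^\infty}^2 T$ and using $(a+b+c)^m\le 3^{m-1}(a^m+b^m+c^m)$; dropping the $\gamma$ from $\gamma|(\rv)_x|^2$ only weakens the left side since $\gamma\ge1$. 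For \eqref{eq:expmom}: from \eqref{eq:expmart}, the random variable $S:=\sup_{0\le t\le T}\big(\mathcal{G}(\rv,\vv)+\tfrac14\int_0^t\int_0^1(|(\vv)_x|^2+\gamma|(\rv)_x|^2)\d x\,\d s\big)$ satisfies $\mathbb{P}[S-\mathcal{G}(r_{0,\eps},v_{0,\eps})\ge R+\tfrac12\eps^{2\beta-2}\|\sigma\|_{L^\infty}^2 T]\le\e^{-\kappa_\eps R}$; hence for $\delta<\kappa_\eps$, $\ex[\exp(\delta(S-\mathcal{G}(r_{0,\eps},v_{0,\eps})))]\le\exp(\delta\cdot\tfrac12\eps^{2\beta-2}\|\sigma\|_{L^\infty}^2 T)\cdot\frac{\kappa_\eps}{\kappa_\eps-\delta}$, and then Cauchy–Schwarz splits $\ex[\exp(\delta S)]\le\big(\ex[\exp(2\delta(S-\mathcal{G}(r_{0,\eps},v_{0,\eps})))]\big)^{1/2}\big(\ex[\exp(2\delta\mathcal{G}(r_{0,\eps},v_{0,\eps}))]\big)^{1/2}$; the restriction $\delta<\kappa_\eps/2$ is exactly what makes the first factor finite (with $C_\delta=\sqrt{\kappa_\eps/(\kappa_\eps-2\delta)}$, uniform in $\eps$ since $\kappa_\eps\to\infty$ when $\beta<1$ — and in general one only needs $\delta<\kappa_\eps/2$, which the hypothesis grants). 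Throughout, the one genuinely delicate point is verifying that the dissipation generated by the single viscosity $(\vv)_{xx}$ really controls both $\int|(\vv)_x|^2$ and $\gamma\int|(\rv)_x|^2$; this uses the continuity equation $\d r_\eps=-\tfrac1\eps(v_\eps)_x\d t$ to rewrite $\tfrac{d}{dt}\int(\tfrac12\eps v r_x+\tfrac{\eps^2}{4}r_x^2)\d x$ and see the $\gamma\int|(\rv)_x|^2$ term appear with a good sign, exactly paralleling the role of the $\tfrac12\tfrac{\p_x u}{\p}+\tfrac14\tfrac{\p_x^2}{\p^3}$ terms in $\cE$.
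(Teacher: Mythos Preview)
Your overall strategy is correct and matches the paper's: derive the energy identity for $\mathcal{G}$, isolate the martingale part, bound its quadratic variation, apply the exponential martingale inequality for \eqref{eq:expmart}, then integrate the tail bound for \eqref{eq:polymart} and use Cauchy--Schwarz for \eqref{eq:expmom}. The energy balance you describe (including the appearance of $\gamma\|(\rv)_x\|_{L^2}^2$ from the cross terms) and your treatment of \eqref{eq:polymart} and \eqref{eq:expmom} are essentially identical to the paper's.

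The one place where your argument does not close as written is the quadratic-variation step for \eqref{eq:expmart}. Routing through $\int_0^t\mathcal{G}\,\d s$ is an unnecessary detour that loses constants: the pointwise bound $\int_0^1(v_\eps^2+\tfrac{\eps^2}{2}|(\rv)_x|^2)\,\d x\le C\,\mathcal{G}$ holds only with $C$ strictly larger than what you need, and then converting $\int_0^t\mathcal{G}\,\d s$ back to the dissipation integral via $\mathcal{G}\le\gamma\|(\rv)_x\|_{L^2}^2+\|(\vv)_x\|_{L^2}^2$ costs another factor, so the precise value $\kappa_\eps=(4\eps^{2\beta-2}\|\sigma\|_{L^\infty}^2)^{-1}$ does not fall out, and the ``controlled by $Z_t$ itself up to constants'' step becomes circular. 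The paper's fix is to bound the quadratic variation \emph{directly} by the dissipation integral, bypassing $\mathcal{G}$ entirely: using Poincar\'e on $\vv\in H^1_0$ to replace $\|\vv\|_{L^2}^2$ by $\|(\vv)_x\|_{L^2}^2$ (and $\eps^2\le 1\le\gamma$ for the $(\rv)_x$ contribution) yields
\[
\langle M\rangle_t\;\le\;\frac{1}{2\kappa_\eps}\int_0^t\!\!\int_0^1\big(|(\vv)_x|^2+\gamma|(\rv)_x|^2\big)\,\d x\,\d s,
\]
which is exactly what is needed so that $\tfrac{\kappa_\eps}{2}\langle M\rangle_t$ is absorbed by the spare quarter of the dissipation in $Z_t$. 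No mean-zero Poincar\'e on $\rv$ is needed at this stage, and no circular appeal back to $Z_t$ is required. With this adjustment your sketch becomes the paper's proof.
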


\begin{proof}
By It\^{o}'s formula, the two equations in \eqref{linear} imply that
\begin{equation*} \label{eq:differetial_equality_of_energy}
\d \Big(\intx \frac{1}{2}(\gamma \rv^2 + \vv^2)\d x\Big)
+\Big(\intx |(\vv)_x|^2 \d x \Big) \d t
= \Big(\intx \frac{1}{2}\eps^{2\beta-2}\sigma^2 \d x\Big) \d t
+\Big( \intx \eps^{\beta-1}\sigma \vv \d x \Big) \d W_t.
\end{equation*}
Again, the first integral on the right-hand-side is
\begin{equation*}
    \intx \frac{1}{2}\eps^{2\beta-2}\sigma^2 \d x
    = \sum_{k=1}^\infty \intx \frac{1}{2}\eps^{2\beta-2}\sigma_k^2 \d x,
\end{equation*}
and the last integral is a short form representing
\begin{equation*}
\sum_{k=1}^\infty \Big( \intx \eps^{\beta-1}\sigma_k \vv \d x \Big) \d W^k_t.
\end{equation*}
This equality, together with the observation that
\begin{equation*}
\d \Big( \eps \vv (\rv)_x + \frac{\eps^2}{2}|(\rv)_x|^2\Big)
= -vv_{xx}\d t-\gamma |(\rv)_x|^2\d t + \eps^\beta (\rv)_x \sigma \d W_t,
\end{equation*}
gives
\begin{align}
&\d \mathcal{G}(\rv, \vv)
+\frac{1}{2}\Big(\intx \big(|(\vv)_x|^2+\gamma |(\rv)_x|^2\big) \d x \Big) \d t\nonumber \\
&= \Big(\intx \frac{1}{2}\eps^{2\beta-2}\sigma^2 \d x\Big) \d t
+\Big( \intx \eps^{\beta-1}\sigma \big(\vv+\frac{1}{2}\eps (\rv)_x\big) \d x \Big) \d W_t.
\label{eq:differetial_equality_of_modified_energy}
\end{align}
Define
\begin{align*}
Z(t) &:= \intt \Big( \intx \eps^{\beta-1}\sigma \big(\vv+\frac{1}{2}\eps (\rv)_x\big) \d x \Big) \d W_s \\
&= \sum_{k=1}^\infty \intt \Big( \intx \eps^{\beta-1}\sigma_k \big(\vv+\frac{1}{2}\eps (\rv)_x\big) \d x \Big) \d W^k_s.
\end{align*}
Then $(Z(t))_{t \geq 0}$ is a (local) martingale. It follows from \eqref{eq:differetial_equality_of_modified_energy}
that, for all $t \geq 0$,
\begin{align}
&\mathcal{G}(\rv, \vv)
+\frac{1}{2} \int_0^t \intx \big(|(\vv)_x|^2+\gamma |(\rv)_x|^2\big) \d x \d s \nonumber\\[1mm]
&= \mathcal{G}(r_{0, \eps}, v_{0, \eps})
+ \frac{1}{2}\eps^{2\beta-2}\int_0^t \intx \sigma^2 \d x \d s + Z(t).\label{eq:energy_balance_linear}
\end{align}
Also, the quadratic variation of $Z(t)$ satisfies
\begin{align*}
\langle Z\rangle(t)
&= \sum_{k=1}^\infty \int_0^t \Big( \intx \eps^{\beta-1}\sigma_k \big(\vv+\frac{1}{2}\eps (\rv)_x\big)\d x \Big)^2 \d s\\
&\leq \eps^{2(\beta-1)}  \sum_{k=1}^\infty \int_0^t
  \|\sigma_k\|_{L^2}^2
  \big\|\vv+\frac{1}{2}\eps (\rv)_x \big\|_{L^2}^2 \d s\\
&\leq 2\eps^{2(\beta-1)}\|\sigma\|_{L^\infty}^2 \int_0^t \|\vv\|_{L^2}^2 \d s
 + \frac{1}{2}\eps^{2\beta}\|\sigma\|_{L^\infty}^2 \int_0^t \intx |(\rv)_x|^2 \d x \d s\\
&\leq \frac{1}{2\kappa_\eps}
 \intt \intx \big(|(\vv)_x|^2
 + \gamma |(\rv)_x|^2\big) \d x \d s.
\end{align*}
Let
\begin{equation*}
\Phi(t) := \mathcal{G}(\rv, \vv)(t) + \frac{1}{4} \int_0^t \intx
\big(|(\vv)_x|^2
+\gamma |(\rv)_x|^2\big) \d x\d s.
\end{equation*}
Then
\begin{align*}
&  \Phi(t) -\frac{1}{2} \eps^{2\beta-2} \|\sigma\|_{L^\infty}^2 t - \mathcal{G}(r_{0, \eps}, v_{0, \eps}) \\
&\leq  Z(t) - \frac{\kappa_\eps}{2}\langle Z\rangle(t)
 + \Big(\frac{\kappa_\eps}{2}\langle Z\rangle(t)
- \frac{1}{4} \int_0^t \intx \big(|(\vv)_x|^2
+\gamma |(\rv)_x|^2\big) \d x\d s\Big)\\
&\leq Z(t) - \frac{\kappa_\eps}{2}\langle Z\rangle(t).
\end{align*}
Thus, it follows from the exponential martingale estimates that, for all $R > 0$,
\begin{align*}
 &\mathbb{P} \Big[\sup_{t \geq 0} \big(\Phi(t)-\frac{1}{2} \eps^{2\beta-2} \|\sigma\|_{L^\infty}^2 t\big)
  - \mathcal{G}(r_{0, \eps}, v_{0, \eps}) \geq R \Big]\\
 &\leq \mathbb{P} \Big[\sup_{t \geq 0} \big(Z(t) - \frac{\kappa_\eps}{2}\langle Z\rangle(t)(t) \big) \geq R \Big] \\
&\leq \e^{-\kappa_\eps R},
\end{align*}
which is \eqref{eq:expmart}.
To prove \eqref{eq:polymart}, fix $T>0$ and denote
$$
Y := \frac{1}{2} \eps^{2\beta-2} \|\sigma\|_{L^\infty}^2 T +  \mathcal{G}(r_{0, \eps}, v_{0, \eps}).
$$
We also denote by $\chi$ the characteristic function of the set: $\{\supt \Phi(t) >Y\}$.
Then we have
\begin{align*}
 \ex \Big[ \supt \Phi(t)^m \Big]
&\leq 2^{m-1}\ex  \Big[ \supt(\Phi(t)-Y)^m \chi \Big]
 +2^{m-1}\ex  \big[ Y^m \chi \big]
 + \ex  \Big[ \supt \Phi(t)^m  (1-\chi) \Big]\\
&\leq 2^{m-1}\ex  \Big[ \supt (\Phi(t)-Y)^m \chi \Big] +2^{m-1}\ex [Y^m]\\
&= 2^{m-1}\int_0^\infty  \mathbb{P}\Big[ \supt(\Phi(t)-Y)^m \chi \geq \lambda \Big] \d\lambda
  +2^{m-1} \ex \big[Y^m\big]\\
&\leq  2^{m-1}\int_0^\infty \e^{-\kappa_\eps \lambda^{1/m}} \d\lambda
   +2^{m-1} \ex \Big[\Big(\frac{1}{2} \eps^{2\beta-2} \|\sigma\|_{L^\infty}^2 T +  \mathcal{G}(r_{0, \eps}, v_{0, \eps})\Big)^m\Big]\\
&\leq  c_m \Big( \kappa_\eps^{-m}+  \eps^{2m\beta-2m} \|\sigma\|_{L^\infty}^{2m} T^m
+  \ex \big[\mathcal{G}(r_{0, \eps}, v_{0, \eps})^m \big]\Big),
\end{align*}
as we want.
Turning to \eqref{eq:expmom},
we fix $0 < \delta < \frac{\kappa_\eps}{2}$ and argue similarly to obtain
\begin{align*}
\ex \Big[\exp \big( \delta\supt \Phi(t)\big)\Big]
&\leq \ex \Big[\exp \big( \delta\supt (\Phi(t)- Y) \chi \big)\exp (\delta Y \chi)
 \exp \big( \delta(1-\chi)\supt \Phi(t) \big)\Big]\\
&\leq \ex \Big[ \exp ( \delta Y) \exp \big( \delta\supt (\Phi(t)-Y) \chi \big) \Big]\\
&\leq \sqrt{\ex \big[ \exp( 2\delta Y)\big]}
  \sqrt{ \ex\Big[ \exp \big( 2\delta\supt (\Phi(t)-Y) \chi \big) \Big]}\\
&\leq \sqrt{\ex \big[ \exp ( 2\delta Y)\big]}
\sqrt{\int_1^\infty \mathbb{P}\Big[\supt (\Phi(t)-Y) \chi \geq \frac{1}{2\delta} \log \lambda\Big] \d\lambda}\\
&\leq \sqrt{\ex \Big[\exp \big(2\delta( \frac{1}{2} \eps^{2\beta-2} \|\sigma\|_{L^\infty}^2 T
+  \mathcal{G}(r_{0, \eps}, v_{0, \eps}) ) \big)\Big]} \sqrt{\int_1^\infty \lambda^{-\kappa_\eps/(2\delta)} \d\lambda}\\
&= C_\delta \exp \big(\frac{\delta}{2}  \eps^{2\beta-2} \|\sigma\|_{L^\infty}^2 T \big)
\sqrt{ \ex\big[ \exp \big(2\delta \mathcal{G}(r_{0, \eps}, v_{0, \eps})\big)\big]}.
\end{align*}
This concludes the proof.
\end{proof}

\subsection{Exponential contraction and stability}
Due to the linearity of system \eqref{linear} and the additivity of the noise term,
the stability of the acoustic system can be analyzed through the corresponding deterministic system.
The main result of this section is the following{\rm :}

\begin{proposition}
Let $\eps \in (0,1]$.
Let $(\rv, \vv) \in C([0, \infty); H^1) \times C([0, \infty; H^1_0) \cap L^2_{\text{\rm loc}}(0, \infty; H^2)$
be a solution to the deterministic system:
\begin{equation} \label{eq:main_linear_deterministic}
\begin{cases}
(\rv)_t + \frac{1}{\eps} (\vv)_x =0,\\
(\vv)_t + \frac{\gamma}{\eps} (\rv)_x = (\vv)_{xx} .
\end{cases}
\end{equation}
Then the following exponential stability estimates hold{\rm :} For every $t\geq 0$,
\begin{align}
&\mathcal{G}(\rv(t, \cdot), \vv(t, \cdot)) \leq \mathcal{G}(\rv(0, \cdot), \vv(0, \cdot))\e^{-\frac{t}{2}}, \label{eq:exp_decay_modified_energy}\\[1mm]
&\|(\rv,\, \vv)(t, \cdot)\|_{H^1}^2
\leq 8 \gamma \|(\rv, \,\vv)(0, \cdot)\|_{H^1}^2
\e^{-\frac{t}{4}}, \label{eq:exp_decay_H_1}\\[2mm]
&\|(\rv,\,\vv)(t,\cdot)\|_{L^2}^2
+ \eps^2 \|((\rv)_x,
(\vv)_x)(t,\cdot)\|_{L^2}^2 \nonumber \\
& \quad \leq 6\gamma \Big( \|(\rv,\,\vv)(0,\cdot)\|_{L^2}^2
+ \eps^2 \|((\rv)_x, (\vv)_x)(0,\cdot)\|_{L^2}^2 \Big) \e^{-\frac{t}{2}}.   \label{eq:exp_decay_epsilon_H1}
\end{align}
\end{proposition}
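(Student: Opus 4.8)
The plan is to establish \eqref{eq:exp_decay_modified_energy} first, as the other two estimates will follow from algebraic comparisons between $\mathcal{G}$ and the $H^1$/weighted-$H^1$ norms. Starting from the deterministic energy balance — which is the $\sigma = 0$ version of \eqref{eq:differetial_equality_of_modified_energy}, namely
\begin{equation*}
\ddt \mathcal{G}(\rv,\vv) + \frac12 \intx \big(|(\vv)_x|^2 + \gamma |(\rv)_x|^2\big)\d x = 0,
\end{equation*}
the task reduces to showing the coercivity estimate $\mathcal{G}(\rv,\vv) \leq \intx\big(|(\vv)_x|^2 + \gamma|(\rv)_x|^2\big)\d x$, which then gives $\ddt \mathcal{G} \leq -\tfrac12 \mathcal{G}$ and hence \eqref{eq:exp_decay_modified_energy} by Gr\"onwall. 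To prove this coercivity, I would use the boundary conditions $\vv(t,0)=\vv(t,1)=0$ together with $\intx \rv\d x = 0$ (which is propagated by the continuity equation) to apply Poincar\'e-type inequalities: $\|\vv\|_{L^2} \lesssim \|(\vv)_x\|_{L^2}$ and $\|\rv\|_{L^2}\lesssim \|(\rv)_x\|_{L^2}$. The three terms in $\mathcal{G}$ are $\tfrac12(\gamma\|\rv\|_{L^2}^2 + \|\vv\|_{L^2}^2)$, the cross term $\tfrac{\eps}{2}\intx \vv (\rv)_x\d x$, and $\tfrac{\eps^2}{4}\|(\rv)_x\|_{L^2}^2$; bounding the first by Poincar\'e, the cross term by Cauchy--Schwarz and Young (using $\eps\leq 1$), and absorbing everything into a multiple of $\|(\vv)_x\|_{L^2}^2 + \gamma\|(\rv)_x\|_{L^2}^2$ should close it, possibly after first checking $\mathcal{G}\geq 0$ via a Young inequality on the cross term.

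For \eqref{eq:exp_decay_H_1}, I would sandwich $\|(\rv,\vv)\|_{H^1}^2 = \|\rv\|_{L^2}^2 + \|(\rv)_x\|_{L^2}^2 + \|\vv\|_{L^2}^2 + \|(\vv)_x\|_{L^2}^2$ between constant multiples of $\mathcal{G}$. The subtlety is that $\mathcal{G}$ only directly controls $\|(\rv)_x\|_{L^2}^2$ (with an $\eps^2$ weight) and $\|\rv\|_{L^2}^2, \|\vv\|_{L^2}^2$, but not $\|(\vv)_x\|_{L^2}^2$; so a naive comparison fails. To recover the $\|(\vv)_x\|_{L^2}$ control I would differentiate the energy identity again, or equivalently use the dissipation term: integrating the energy balance gives $\int_0^t \intx(|(\vv)_x|^2 + \gamma|(\rv)_x|^2)\d x\d s \leq 2\mathcal{G}(0)$, but to get a pointwise-in-$t$ bound on $\|(\vv)_x\|_{L^2}$ one needs a higher-order energy estimate — e.g. multiply the $\vv$-equation by $-(\vv)_{xx}$ and the $\rv$-equation by $(\rv)_{xx}$ and combine, obtaining control of $\ddt(\|(\rv)_x\|_{L^2}^2 + \|(\vv)_x\|_{L^2}^2)$. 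Alternatively, and more cheaply, one can note that $\mathcal{G}(\rv(t),\vv(t))$ controls $\eps^2\|(\rv)_x(t)\|_{L^2}^2$, and then use the $\rv$-equation $(\vv)_x = -\eps(\rv)_t$ is not directly available pointwise; the cleaner route for \eqref{eq:exp_decay_H_1} is likely to prove it for $\eps=1$ scaling or to establish \eqref{eq:exp_decay_epsilon_H1} first and then observe that for the $H^1$ statement one absorbs the $\eps$-weights crudely using $\eps\leq 1$ in one direction — but that loses the $\eps$-uniformity in the other direction. I expect the intended argument uses a second, $\eps$-independent damped-wave energy (the $H^1$-level analogue of $\mathcal{G}$) that decays at rate $\e^{-t/4}$.

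For \eqref{eq:exp_decay_epsilon_H1}, the quantity $\|(\rv,\vv)\|_{L^2}^2 + \eps^2\|((\rv)_x,(\vv)_x)\|_{L^2}^2$ is the natural $\eps$-weighted norm, and I would show it is comparable to $\mathcal{G}(\rv,\vv) + (\text{something controlled by }\mathcal{G})$ up to universal constants — the cross term in $\mathcal{G}$ is harmless by Young since $\eps\leq 1$, giving $\tfrac14 \big(\|\vv\|_{L^2}^2 + \gamma\|\rv\|_{L^2}^2 + \eps^2\|(\rv)_x\|_{L^2}^2\big) \lesssim \mathcal{G} \lesssim \|\vv\|_{L^2}^2 + \gamma\|\rv\|_{L^2}^2 + \eps^2\|(\rv)_x\|_{L^2}^2$. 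The missing piece is again $\eps^2\|(\vv)_x\|_{L^2}^2$; here one can use the momentum equation to write $\eps^2\|(\vv)_x\|_{L^2}^2$ in terms of the other quantities plus $\eps^2\|(\rv)_x\|_{L^2}^2$ and the time derivative, or more simply run a combined energy estimate at the $\eps$-weighted $H^1$ level that is monotone and comparable to $\mathcal{G}$-plus-correction. Tracking the explicit constants ($6\gamma$, $8\gamma$) will require care but is mechanical.

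\textbf{Main obstacle.} The genuinely delicate point is controlling $\|(\vv)_x\|_{L^2}$ pointwise in time, since $\mathcal{G}$ (and its dissipation, which is only integrated in time) does not see it directly; recovering it with the correct $\eps$-uniformity and the stated exponential rates $\e^{-t/2}$, $\e^{-t/4}$ is where the structure of the damped wave equation (the viscous term acting on $\vv$ only) must be exploited, most likely via a supplementary $H^1$-level energy functional analogous to $\mathcal{G}$ rather than by naive norm comparison.
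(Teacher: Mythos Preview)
Your proposal is essentially correct and aligns with the paper's approach: the first estimate is exactly as you describe (energy balance plus the coercivity $\mathcal{G}\leq \gamma\|(\rv)_x\|_{L^2}^2+\|(\vv)_x\|_{L^2}^2$ via Poincar\'e, then Gr\"onwall), and for the other two the paper does precisely what you anticipate under ``multiply the $\vv$-equation by $-(\vv)_{xx}$'' --- more precisely, it differentiates the $\rv$-equation in $x$ and multiplies the $\vv$-equation by $(\vv)_{xx}$ to obtain the clean identity $\ddt(\gamma\|(\rv)_x\|_{L^2}^2+\|(\vv)_x\|_{L^2}^2)=-2\|(\vv)_{xx}\|_{L^2}^2$, then adds this (resp.\ its $\eps^2$-weighted version) to the $\mathcal{G}$-balance and runs Gr\"onwall on the sum. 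The other routes you float (pointwise use of $(\vv)_x=-\eps(\rv)_t$, or deducing \eqref{eq:exp_decay_H_1} from \eqref{eq:exp_decay_epsilon_H1} by crude $\eps$-absorption) are indeed dead ends, so you should commit to the $H^1$-level identity.
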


\begin{proof} First, notice the following pointwise inequality:
\begin{align*}
\frac{1}{2}(\gamma \rv^2 + \vv^2) + \frac{1}{2}\eps \vv (\rv)_x + \frac{\eps^2}{4}|(\rv)_x|^2
\leq \frac{\gamma}{2}\rv^2 +\frac{3}{4} \vv^2+  \frac{\eps^2}{2}|(\rv)_x|^2.
\end{align*}
Then, applying the Poincar\'e inequality for both $\rv$ and $\vv$ in the second inequality, we have
\begin{align}
\mathcal{G}(\rv, \vv) &\leq \frac{\gamma +\eps^2}{2}\|(\rv)_x\|_{L^2}^2 +\frac{3}{4} \|\vv\|_{L^2}^2
\leq \gamma \|(\rv)_x\|_{L^2}^2 +\|(\vv)_x\|_{L^2}^2  \label{eq:bound_G_by_H1}
\end{align}
for all $\eps \in (0, 1]$.
By the equations, we have the following equality (see \eqref{eq:differetial_equality_of_modified_energy}):
\begin{align} \label{eq:energy_bal_linear_deterministic}
\ddt \mathcal{G}(\rv, \vv) = -\frac{\gamma}{2}\|(\rv)_x\|_{L^2}^2 -\frac{1}{2}\|(\vv)_x\|_{L^2}^2,
\end{align}
which implies
\begin{align*}
\ddt \mathcal{G}(\rv, \vv) \leq -\frac{1}{2}\mathcal{G}(\rv, \vv).
\end{align*}
The first estimate \eqref{eq:exp_decay_modified_energy} then follows from the Gronwall inequality.

Now, differentiating the first equation in \eqref{eq:main_linear_deterministic} with respect to $x$ gives
\begin{align*}
((\rv)_x)_t + \frac{1}{\eps}(\vv)_{xx} =0.
\end{align*}
With this, multiplying the second equation of \eqref{eq:main_linear_deterministic} by $(\vv)_{xx}$ and integrating in $x$,
we have
\begin{align*}
\ddt \intx \frac{1}{2}|(\vv)_x|^2 \d x + \intx |(\vv)_{xx}|^2 \d x
= \intx \frac{\gamma}{\eps}(\rv)_x (\vv)_{xx}\d x= -\ddt \intx \frac{\gamma}{2}|(\rv)_x|^2 \d x,
\end{align*}
which implies
\begin{align} \label{eq:linear_derivation_H1}
\ddt \Big( \intx \big(\gamma |(\rv)_x|^2+|(\vv)_x|^2\big) \d x \Big)
&= -2\intx |(\vv)_{xx}|^2 \d x \leq -2 \intx |(\vv)_x|^2 \d x.
\end{align}
The last inequality is due to the Poincar\'e inequality applied to $(\vv)_x$ that is of zero mean.
Combining this with \eqref{eq:energy_bal_linear_deterministic}, we have
\begin{align*}
\ddt \Big( \mathcal{G}(\rv, \vv)
+ \intx \big(\gamma |(\rv)_x|^2+|(\vv)_x|^2\big) \d x \Big)
&\leq -\frac{\gamma}{2}\|(\rv)_x\|_{L^2}^2 -\frac{5}{2}\|(\vv)_x\|_{L^2}^2\\
&\leq -\frac{1}{4}\Big( \mathcal{G}(\rv, \vv) + \intx \big(\gamma|(\rv)_x|^2+|(\vv)_x|^2\big) \d x \Big).
\end{align*}
The second estimate \eqref{eq:exp_decay_H_1} now follows from the Gronwall inequality and the bounds:
\begin{align*}
 \frac{1}{4}\intx  \big(\gamma \rv^2+\vv^2+\gamma |(\rv)_x|^2+|(\vv)_x|^2\big) \d x
 &\leq  \mathcal{G}(\rv, \vv) + \intx \big(\gamma |(\rv)_x|^2+|(\vv)_x|^2\big) \d x \\
 &\leq 2 \intx \big(\gamma |(\rv)_x|^2+|(\vv)_x|^2\big)\d x.
\end{align*}

Similarly, \eqref{eq:energy_bal_linear_deterministic}--\eqref{eq:linear_derivation_H1} imply
\begin{align*}
\ddt \Big( \mathcal{G}(\rv, \vv) + \eps^2 \intx \big(\gamma |(\rv)_x|^2+|(\vv)_x|^2\big) \d x \Big)
&\leq -\frac{\gamma}{2}\|(\rv)_x\|_{L^2}^2 -\frac{1}{2}\|(\vv)_x\|_{L^2}^2\\
&\leq -\frac{1}{2}\Big( \mathcal{G}(\rv, \vv)
+ \eps^2 \intx \big(\gamma|(\rv)_x|^2+|(\vv)_x|^2\big) \d x \Big).
\end{align*}
The last estimate \eqref{eq:exp_decay_epsilon_H1} now follows from the Gronwall inequality and the bounds:
\begin{align*}
 \frac{1}{4}\intx  \big(\rv^2+\vv^2+\eps|(\rv)_x|^2+ \eps|(\vv)_x|^2\big) \d x
 &\leq  \mathcal{G}(\rv, \vv) + \eps^2 \intx \big(\gamma |(\rv)_x|^2+|(\vv)_x|^2\big) \d x \\
 &\leq \frac{3\gamma}{2}\intx \big(\rv^2 + \vv^2 + \eps|(\rv)_x|^2+\eps|(\vv)_x|^2\big)\d x.
\end{align*}
This concludes the proof.
\end{proof}

A straightforward consequence of the above result is the stability and continuous dependence of solutions of problem \eqref{ID-2}--\eqref{BC-2}
for system \eqref{linear},
which follows from the observation that the difference of
any two solutions of system \eqref{linear} is a solution to
the deterministic system \eqref{eq:main_linear_deterministic}.

\begin{corollary} \label{cor_linear_exponential_approach}
Let $\eps \in (0,1]$. Let $(r_{1, \eps}, v_{1, \eps})$ and $(r_{2, \eps}, v_{2, \eps})$
be two solutions of  problem \eqref{ID-2}--\eqref{BC-2} for the stochastic system \eqref{linear}.
Then, for all $t \geq 0$,
\begin{align*}
& \big\|(r_{1, \eps}-r_{2, \eps},\,v_{1, \eps}-v_{2, \eps})(t,\cdot)\big\|_{H^1}^2
\leq 8 \gamma \big\|(r_{1, \eps}-r_{2, \eps},\, v_{1, \eps}-v_{2, \eps})(0,\cdot)\big\|_{H^1}^2 \e^{-\frac{t}{4}},\\[2mm]
& \big\|(r_{1, \eps}-r_{2, \eps},\,v_{1, \eps}-v_{2, \eps})(t,\cdot)\big\|_{L^2}^2
+ \eps^2 \big\|((r_{1, \eps})_x-(r_{2, \eps})_x, \,(v_{1, \eps})_x -(v_{2, \eps})_x) (t,\cdot)\big\|_{L^2}^2  \\
&\,\,\,\leq 6 \gamma \Big(\big\|(r_{1, \eps}-r_{2, \eps},\,v_{1, \eps}-v_{2, \eps})(0,\cdot)\big\|_{L^2}^2
+ \eps^2 \big\|((r_{1, \eps})_x -(r_{2, \eps})_x,\,(v_{1, \eps})_x -(v_{2, \eps})_x)(0,\cdot)\big\|_{L^2}^2\Big) \e^{-\frac{t}{2}},
\end{align*}
almost surely.
\end{corollary}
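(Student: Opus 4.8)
The plan is to reduce the claim to the deterministic stability estimates \eqref{eq:exp_decay_H_1} and \eqref{eq:exp_decay_epsilon_H1}, exploiting the remark made just before the statement: since the noise in \eqref{linear} is additive and its coefficient $\eps^{\beta-1}\sigma$ does not depend on the solution, the difference of two stochastic solutions is governed by the deterministic system. Concretely, I would set $r := r_{1, \eps} - r_{2, \eps}$ and $v := v_{1, \eps} - v_{2, \eps}$. Subtracting the two copies of \eqref{linear} — which are driven by the same Wiener processes with the same coefficient in front of $\d W_t$ — the stochastic integrals cancel identically, and by linearity the pair $(r, v)$ satisfies, $\mathbb{P}$-almost surely, the deterministic problem \eqref{eq:main_linear_deterministic}. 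The boundary condition $v(t,0)=v(t,1)=0$ is inherited from \eqref{BC-2}, the zero-mean constraint $\intx r(t,x)\,\d x = 0$ follows by subtracting the two mean-zero conditions, and the initial datum of $(r,v)$ is $(r_{1,\eps}-r_{2,\eps},\,v_{1,\eps}-v_{2,\eps})(0,\cdot)$.

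Next I would verify that $(r,v)$ belongs to the regularity class required to apply the preceding Proposition, namely $C([0,\infty);H^1)\times\big(C([0,\infty);H^1_0)\cap L^2_{\mathrm{loc}}(0,\infty;H^2)\big)$. This is immediate from Proposition \ref{prop:existence_linear} applied separately to $(r_{1,\eps},v_{1,\eps})$ and $(r_{2,\eps},v_{2,\eps})$, because these function spaces are linear; the subtraction is carried out pathwise on an event of full probability. With this in hand, it suffices to apply \eqref{eq:exp_decay_H_1} and \eqref{eq:exp_decay_epsilon_H1} to $(r,v)$ on that full-probability event, which produces verbatim the two inequalities in the statement, with the same constants $8\gamma$ and $6\gamma$ and decay rates $\e^{-t/4}$ and $\e^{-t/2}$. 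Since the resulting bounds hold for every sample point in a set of probability one and for all $t\ge 0$, the almost sure conclusion follows.

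There is no genuine analytic obstacle here: the argument is essentially bookkeeping. The only point deserving a line of care is the exact cancellation of the martingale part together with the verification that the pathwise difference is a bona fide solution of \eqref{eq:main_linear_deterministic} in the stated class, so that the earlier deterministic Proposition applies without modification — and both of these rest on the additivity of the forcing and the linearity of \eqref{linear}, which are precisely the structural features highlighted in the text.
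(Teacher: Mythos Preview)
Your proposal is correct and follows exactly the paper's own approach: the paper states the corollary as a ``straightforward consequence'' of the preceding proposition, noting only that the difference of two solutions of \eqref{linear} solves the deterministic system \eqref{eq:main_linear_deterministic}. You have simply fleshed out this one-line observation with the (entirely routine) verification of regularity and boundary/mean-zero conditions, so nothing is missing or different.
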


\subsection{Long-time behavior}
We can further explore the dissipative property of system \eqref{linear}
to establish the following energy dissipative estimates.

\begin{proposition} \label{prop:large_time_linear}
Let $(\rv, \vv)$ be the solution of system \eqref{linear} with initial data $(r_{0, \eps}, v_{0, \eps})$.
Then, for all $p\geq 1$, there exists $C_p>0$ independent of $\eps$ such that, for all $t\geq 0$,
\begin{align}\label{eq:largepmom}
\ex \big[\mathcal{G}(\rv(t, \cdot), \vv(t, \cdot))^p \big]
\leq C_p \Big(  \ex \big[\e^{-\frac{p t}{2}}\mathcal{G}(r_{0, \eps}, v_{0, \eps})^p \big]
+ \eps^{2(\beta-1)p} \|\sigma\|_{L^\infty}^{2p}\Big).
\end{align}
Moreover,
for all $0 < \eta < (8\eps^{2(\beta-1)} \|\sigma\|_{L^\infty}^2)^{-1}$, there exists $C_\eta>0$ independent of $\eps$ such that, for all $t \geq 0$,
\begin{align}\label{eq:largeexpmom}
\ex \big[ \exp\big(\eta \mathcal{G}(\rv(t, \cdot), \vv(t, \cdot)\big)\big]
\leq C_\eta \exp\big(2\eta\eps^{2(\beta-1)}\big) \sqrt{\ex \big[ \exp\big(2 \e^{-\frac{t}{4}}\mathcal{G}(r_{0, \eps}, v_{0, \eps})\big) \big]}.
\end{align}
\end{proposition}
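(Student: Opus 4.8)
The plan is to split the solution of \eqref{linear} into an exponentially decaying deterministic part and a noise-driven part whose statistics are controlled uniformly in time, and then to recombine using the quadratic structure of $\mathcal G$. By linearity and additivity of the noise, write $(\rv,\vv)=(\rv^\circ,\vv^\circ)+(\rv^\bullet,\vv^\bullet)$, where $(\rv^\circ,\vv^\circ)$ solves the deterministic system \eqref{eq:main_linear_deterministic} with the $\mathcal F_0$-measurable initial data $(r_{0,\eps},v_{0,\eps})$, while $(\rv^\bullet,\vv^\bullet)$ solves \eqref{linear} with zero initial data. The functional $\mathcal G$ is a non-negative quadratic form — its integrand equals $\tfrac{\gamma}{2}r^2+\tfrac12\big(v+\tfrac{\eps}{2}r_x\big)^2+\tfrac{\eps^2}{8}r_x^2\ge0$ — so, applying Young's inequality to its polarization, $\mathcal G(a+b)\le(1+\theta)\mathcal G(a)+(1+\theta^{-1})\mathcal G(b)$ for every $\theta>0$; in particular $\mathcal G\big((\rv,\vv)(t)\big)\le 2\,\mathcal G\big((\rv^\circ,\vv^\circ)(t)\big)+2\,\mathcal G^\bullet(t)$, where $\mathcal G^\bullet(t):=\mathcal G\big((\rv^\bullet,\vv^\bullet)(t)\big)$. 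Since \eqref{eq:exp_decay_modified_energy} gives $\mathcal G\big((\rv^\circ,\vv^\circ)(t)\big)\le\e^{-t/2}\mathcal G(r_{0,\eps},v_{0,\eps})$ pathwise, everything reduces to \emph{time-uniform} polynomial and exponential moment bounds for $\mathcal G^\bullet$.

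The core step is: for $0<\eta<\kappa_\eps/2$ one has $\sup_{t\ge0}\ex\big[\exp(\eta\mathcal G^\bullet(t))\big]\le C$, with $C$ of the form $\exp(C'\eta\eps^{2(\beta-1)})$. I would apply It\^o's formula to $\exp(\eta\mathcal G^\bullet(t))$, using the $\mathcal G$-energy identity \eqref{eq:differetial_equality_of_modified_energy} for $(\rv^\bullet,\vv^\bullet)$, namely $\d\mathcal G^\bullet=\big(-\tfrac12 f^\bullet+\tfrac12\eps^{2\beta-2}\|\sigma\|_{L^2}^2\big)\d t+\d Z^\bullet$ with $f^\bullet:=\|(\vv^\bullet)_x\|_{L^2}^2+\gamma\|(\rv^\bullet)_x\|_{L^2}^2$, together with the quadratic-variation bound $\d\langle Z^\bullet\rangle\le\tfrac1{2\kappa_\eps}f^\bullet\,\d t$ established in the proof of Proposition \ref{prop_poly_est_linear}. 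The It\^o correction contributes $\tfrac{\eta^2}{4\kappa_\eps}\exp(\eta\mathcal G^\bullet)f^\bullet\,\d t$, which for $\eta<\kappa_\eps/2$ is bounded by $\tfrac{\eta}{8}\exp(\eta\mathcal G^\bullet)f^\bullet\,\d t$ and is absorbed by a fraction of the dissipation $-\tfrac{\eta}{2}\exp(\eta\mathcal G^\bullet)f^\bullet\,\d t$; using $f^\bullet\ge\mathcal G^\bullet$ (this is \eqref{eq:bound_G_by_H1}) and the elementary inequality $x\e^{\eta x}\ge\eta^{-1}(\e^{\eta x}-1)$, and taking expectations after the usual localization of the martingale $Z^\bullet$, one obtains a linear differential inequality $h'(t)\le -a\,h(t)+\tfrac38$ for $h(t):=\ex[\exp(\eta\mathcal G^\bullet(t))]$, with $a=\tfrac38-\tfrac{\eta}{2}\eps^{2\beta-2}\|\sigma\|_{L^2}^2>\tfrac5{16}$ and $h(0)=1$ (zero initial data). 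Grönwall's inequality then yields the claimed uniform bound, and tracking the constants produces the factor $\exp(C'\eta\eps^{2(\beta-1)})$. The uniform polynomial bound follows by taking $\eta=\kappa_\eps/4$ and using $x^p\le p!\,\eta^{-p}\e^{\eta x}$, giving $\sup_{t\ge0}\ex[\mathcal G^\bullet(t)^p]\lesssim_p\eps^{2(\beta-1)p}\|\sigma\|_{L^\infty}^{2p}$.

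To conclude, \eqref{eq:largepmom} follows by combining $\mathcal G\big((\rv,\vv)(t)\big)\le 2\e^{-t/2}\mathcal G(r_{0,\eps},v_{0,\eps})+2\mathcal G^\bullet(t)$ with the uniform polynomial bound on $\mathcal G^\bullet$. For \eqref{eq:largeexpmom}, I would use that $(\rv^\circ,\vv^\circ)(t)$ is a function of the $\mathcal F_0$-measurable datum $(r_{0,\eps},v_{0,\eps})$ while $\mathcal G^\bullet(t)$ is a function of the Wiener increments $(W_s-W_0)_{0\le s\le t}$, which are independent of $\mathcal F_0$; hence the two are independent, so that after the splitting inequality with a suitable $\theta$ the expectation of $\exp(\eta\mathcal G((\rv,\vv)(t)))$ factorizes, the deterministic factor being bounded by $\ex[\exp((1+\theta)\eta\e^{-t/2}\mathcal G(r_{0,\eps},v_{0,\eps}))]\le\sqrt{\ex[\exp(2\eta\e^{-t/4}\mathcal G(r_{0,\eps},v_{0,\eps}))]}$ — valid, by the Cauchy–Schwarz inequality, once $(1+\theta)\e^{-t/4}\le1$ — and the stochastic factor by the uniform exponential estimate above. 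The main obstacle is precisely this time-uniform control of $\mathcal G^\bullet$: the estimates in Proposition \ref{prop_poly_est_linear} only grow (linearly) in $T$, and what saves the argument is the exponential contraction of \eqref{eq:main_linear_deterministic}, encoded in the inequality $f^\bullet\ge\mathcal G^\bullet$, which turns the dissipation of the system into genuine decay at the level of expectations. A secondary, purely bookkeeping issue is to choose $\theta$ — and, on the short initial time interval where the Cauchy–Schwarz step does not directly apply, to invoke \eqref{eq:expmom} instead — so that the constant $C_\eta$ and the full range $0<\eta<\kappa_\eps/2$ in \eqref{eq:largeexpmom} are recovered.
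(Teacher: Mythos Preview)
Your approach is correct and genuinely different from the paper's. The paper does not split the solution; instead it multiplies the energy balance \eqref{eq:differetial_equality_of_modified_energy} by $\e^{t/4}$ and integrates, obtaining
\[
\mathcal G(\rv(t),\vv(t))\le \e^{-t/4}\mathcal G(r_{0,\eps},v_{0,\eps})+2\eps^{2(\beta-1)}\|\sigma\|_{L^\infty}^2
+\Big(\text{time-weighted martingale}-\text{time-weighted dissipation}\Big),
\]
and then applies a generalized exponential martingale inequality (quoting \cite{Mattingly02}*{Lemma~A.1}) to the last bracket to get a tail bound $\mathbb P[\,\cdot\ge R]\le\e^{-\kappa_\eps R}$; the polynomial and exponential moments are read off from this single tail estimate. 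Your route---deterministic/stochastic splitting, exponential decay \eqref{eq:exp_decay_modified_energy} for the first piece, and a direct It\^o/Gr\"onwall argument on $\ex[\exp(\eta\mathcal G^\bullet)]$ for the second---is more elementary (no cited martingale lemma) and exploits the independence of the $\mathcal F_0$-measurable initial datum from the driving Wiener increments to factor the exponential moment, a structural point the paper's proof does not use. The price is the bookkeeping you flag: matching the precise $\e^{-t/4}$ decay in \eqref{eq:largeexpmom} and the full range $\eta<\kappa_\eps/2$ requires juggling the parameter $\theta$ and covering a bounded initial time window via \eqref{eq:expmom}, whereas the paper's tail-bound argument delivers both at once. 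Either way the proof goes through; your version arguably yields slightly sharper dependence on the initial data (with $\e^{-t/2}$ rather than $\e^{-t/4}$) at the cost of a less streamlined endgame.
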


\begin{proof}
From \eqref{eq:differetial_equality_of_modified_energy} and \eqref{eq:bound_G_by_H1}, we obtain
\begin{align*}
&\d \big(\e^{\frac{t}{4}} \mathcal{G}(\rv, \vv)\big)
+\frac{1}{4} \Big( \e^{\frac{t}{4}} \intx \big(|(\vv)_x|^2+ |(\rv)_x|^2\big) \d x \Big) \d t \\
&\leq \Big(\e^{\frac{t}{4}}\intx \frac{1}{2}\eps^{2(\beta-1)}\sigma^2 \d x\Big) \d t
+\Big( \e^{\frac{t}{4}} \intx \eps^{\beta-1}\sigma \big(\vv+\frac{1}{2}\eps (\rv)_x\big) \d x \Big) \d W_t.
\end{align*}
Hence, for all $t \geq 0$, we have
\begin{align*}
\mathcal{G}(\rv(t, \cdot), \vv(t, \cdot))
&\leq \e^{-\frac{t}{4}}\mathcal{G}(r_{0, \eps}, v_{0, \eps}) + 2\eps^{2(\beta-1)}(1-\e^{-\frac{t}{4}})\|\sigma\|_{L^\infty}^2 \\
& \quad +\eps^{\beta-1} \intt \e^{-\frac{t-s}{4}}
  \intx \sigma \big(\vv+\frac{1}{2}\eps (\rv)_x\big) \d x \d W_s\\
& \quad -\frac{1}{4} \Big( \intt \e^{-\frac{t-s}{4}}
\intx \big(|(\vv)_x|^2+ |(\rv)_x|^2\big) \d x \Big) \d s.
\end{align*}
Therefore, for all $t \geq 0$ and $R>0$, we deduce
\begin{align*}
&\mathbb{P}\Big[\mathcal{G}(\rv(t, \cdot), \vv(t, \cdot))-\e^{-\frac{t}{2}}\mathcal{G}(r_{0, \eps}, v_{0, \eps})
- 2\eps^{2(\beta-1)}(1-\e^{-\frac{t}{4}})\|\sigma\|_{L^\infty}^2 \geq R  \Big]\\
&\leq \mathbb{P}\Big[\eps^{\beta-1} \intt \e^{-\frac{t-s}{2}} \intx \sigma \big(\vv+\frac{1}{2}\eps (\rv)_x\big) \d x \d W_s
-\frac{1}{4} \intt \e^{-\frac{t-s}{4}} \intx \big(|(\vv)_x|^2+ |(\rv)_x|^2\big) \d x  \d s \geq R \Big] \\
&\leq \e^{-\kappa_\eps R},
\end{align*}
where $\kappa_\eps = (4\eps^{2(\beta-1)} \|\sigma\|_{L^\infty}^2)^{-1}$ is the same in Proposition \ref{prop_poly_est_linear}.
The last inequality is due to \cite{Mattingly02}*{Lemma A.1} and the fact that
\begin{align*}
\sum_{k = 1}^\infty \Big( \intx \eps^{\beta -1} \sigma_k \big(\vv+\frac{1}{2}\eps (\rv)_x\big) \d x \Big)^2
\leq 2 \eps^{2(\beta-1)}\|\sigma\|_{L^\infty}^2 \Big( \|(\vv)_x\|_{L^2}^2 + \frac{1}{4} \eps^{2} \|(\rv)_x\|_{L^2}^2\Big).
\end{align*}
As a result, we use a computation similar to the proof of Proposition \ref{prop_poly_est_linear} to obtain that,
for all $p \geq 1$,
\begin{align*}
&\ex \Big[\big(\mathcal{G}(\rv(t, \cdot), \vv(t, \cdot))-\e^{-\frac{t}{2}}\mathcal{G}(r_{0, \eps}, v_{0, \eps})
- 2\eps^{2(\beta-1)}(1-\e^{-\frac{t}{4}})\|\sigma\|_{L^\infty}^2\big)^p \Big]\\
&\leq p \int_0^\infty \lambda^{p-1} \e^{-\kappa_\eps \lambda} \d \lambda
= \kappa_\eps^{-p} p! \, p.
\end{align*}
Consequently, we have
\begin{align*}
&\ex \big[\mathcal{G}(\rv(t, \cdot), \vv(t, \cdot))^p\big]\\
&\qquad\leq 3^{p-1} \ex \Big[\e^{-\frac{p t}{2}}\mathcal{G}(r_{0, \eps}, v_{0, \eps})^p \Big]
+3^{p-1}\,  2^p \, \eps^{2(\beta-1)p}(1-\e^{-\frac{t}{4}})^p \|\sigma\|_{L^\infty}^{2p}\\
&\qquad \quad + 3^{p-1} \ex \Big[\big(\mathcal{G}(\rv(t, \cdot), \vv(t, \cdot))-\e^{-\frac{t}{4}}\mathcal{G}(r_{0, \eps}, v_{0, \eps})
- 2\eps^{2(\beta-1)}(1-\e^{-\frac{t}{4}})\|\sigma\|_{L^\infty}^2\big)^p \Big]\\
&\qquad= 3^{p-1} \ex \Big[\e^{-\frac{p t}{2}}\mathcal{G}(r_{0, \eps}, v_{0, \eps})^p \Big]
+3^{p-1}\,  2^p \, \eps^{2(\beta-1)p}(1-\e^{-\frac{t}{4}})^p \|\sigma\|_{L^\infty}^{2p} +\kappa_\eps^{-p} p! \, p,
\end{align*}
which leads to \eqref{eq:largepmom}.
Likewise, we can deduce that, for all $\eta\in (0, \kappa_\eps)$,
\begin{align*}
&\ex \Big[\exp \Big( \eta \big(\mathcal{G}(\rv(t, \cdot), \vv(t, \cdot))-\e^{-\frac{t}{4}}\mathcal{G}(r_{0, \eps}, v_{0, \eps})
- 2\eps^{2(\beta-1)}(1-\e^{-\frac{t}{4}})\|\sigma\|_{L^\infty}^2\big)\Big) \Big]\\
&\leq 1+\int_1^\infty \lambda^{-\frac{\kappa_\eps}{\eta}} \d \lambda = \frac{\kappa_\eps}{\kappa_\eps-\eta}.
\end{align*}
Hence, for all $t \geq 0$ and $\eta\in (0,\frac{\kappa_\eps}{2})$,
\begin{equation*}
 \ex \big[ \exp\big(\eta \mathcal{G}(\rv(t, \cdot), \vv(t, \cdot)\big)\big]
 \leq \exp\big(2\eta\eps^{2(\beta-1)}(1-\e^{-\frac{t}{4}})\big)
 \sqrt{\frac{\kappa_\eps}{\kappa_\eps-2\eta}\ex \big[ \exp\big(2 \e^{-\frac{t}{4}}\mathcal{G}(r_{0, \eps}, v_{0, \eps})\big) \big]},
\end{equation*}
from which \eqref{eq:largeexpmom} follows. This completes the proof.
\end{proof}

\section{Finite-Time Convergence: Proof of Theorem \ref{thm:finite_time_convergence}}\label{section:finite_time_analysis}

In this section, we gather the estimates established in the previous sections to provide a proof of
Theorem \ref{thm:finite_time_convergence}. We first estimate the difference between $(\bpv, \buv)$ and $(\rv, \vv)$.
\begin{proposition}
There exists a generic constant $C>0$, independent of any parameters, such that, for all $t \geq 0$, the following
estimate holds almost surely{\rm :}
\begin{align}
&\gamma \|\bpv(t)-\rv(t)\|_{L^2}^2 + \|\buv(t)-\vv(t)\|_{L^2}^2 + \intt \|(\buv-\vv)_x\|_{L^2}^2 \d s\nonumber\\
&\leq \exp \Big(\intt \frac{\eps}{\gamma}\|(\buv)_x \|_{L^2}^2 \d s\Big)\nonumber\\
&\quad\times\Big(\gamma\|\bpv(0)-\rv(0)\|_{L^2}^2 + \|\buv(0)-\vv(0)\|_{L^2}^2 +C \eps^2 \int_0^t \intx \buv^4 \d x \d s\nonumber\\
&\qquad \quad + C\eps^4 \int_0^t \|(1+\eps^2 \bpv)^{-2}\|_{L^\infty}^2
   \|(\bpv)_x \|_{L^2}^2 \|(\buv)_x\|_{L^2}^2 \d x \d s\nonumber\\
&\qquad \quad + C |\gamma-2| \eps^2 \int_0^t \|(1+\eps^2 \bpv)^{\gamma-3}\|_{L^\infty} \intx |\bpv|^4 \d x \d s
+  C\gamma^2 \eps \int_0^t \|(\bpv)_x\|_{L^2}^2 \d s\Big).\label{eq:difference_L^2}
\end{align}
\end{proposition}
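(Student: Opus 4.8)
\emph{Proof strategy.} The plan is to study the difference variables $\phi:=\bpv-\rv$ and $\psi:=\buv-\vv$. Subtracting \eqref{linear} from \eqref{eq:nonlinear}, the additive forcing $\eps^{\beta-1}\sigma\,\d W_t$, being identical in the two momentum equations, cancels, so $(\phi,\psi)$ solves a system with \emph{no} stochastic integral. Writing each coefficient of \eqref{eq:nonlinear} as the corresponding coefficient of \eqref{linear} plus an explicit $O(\eps)$ correction, one obtains
\begin{align*}
&\d\phi+\tfrac1\eps\psi_x\,\d t=-\eps(\bpv\buv)_x\,\d t,\\
&\d\psi+\tfrac\gamma\eps\phi_x\,\d t=\psi_{xx}\,\d t-\eps\buv(\buv)_x\,\d t-\tfrac\gamma\eps\big[(1+\eps^2\bpv)^{\gamma-2}-1\big](\bpv)_x\,\d t-\tfrac{\eps^2\bpv}{1+\eps^2\bpv}(\buv)_{xx}\,\d t,
\end{align*}
together with $\psi(t,0)=\psi(t,1)=0$ (both $\buv$ and $\vv$ vanish at the endpoints) and $\intx\phi\,\d x=0$. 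The regularity in Propositions \ref{prop:existence_nonlinear} and \ref{prop:existence_linear} makes all the computations below rigorous, pathwise.

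I would then differentiate $E(t):=\frac\gamma2\|\phi(t)\|_{L^2}^2+\frac12\|\psi(t)\|_{L^2}^2$ in time. The leading-order $O(1/\eps)$ terms cancel: integrating $-\frac\gamma\eps\langle\psi,\phi_x\rangle$ by parts (using $\psi|_\partial=0$) against $-\frac\gamma\eps\langle\phi,\psi_x\rangle$ annihilates them, which is the acoustic skew-symmetry; and the linear viscosity gives the exact dissipation $\langle\psi,\psi_{xx}\rangle=-\|\psi_x\|_{L^2}^2$. This leaves
\begin{equation*}
\ddt E+\|\psi_x\|_{L^2}^2=\underbrace{-\gamma\eps\langle\phi,(\bpv\buv)_x\rangle}_{I}\;\underbrace{-\,\eps\langle\psi,\buv(\buv)_x\rangle}_{II}\;\underbrace{-\,\tfrac\gamma\eps\big\langle\psi,[(1+\eps^2\bpv)^{\gamma-2}-1](\bpv)_x\big\rangle}_{III}\;\underbrace{-\,\big\langle\psi,\tfrac{\eps^2\bpv}{1+\eps^2\bpv}(\buv)_{xx}\big\rangle}_{IV},
\end{equation*}
and the whole task reduces to bounding $I$--$IV$ by (small multiples of) $\|\psi_x\|_{L^2}^2$, plus the Gronwall weight $\frac\eps\gamma\|(\buv)_x\|_{L^2}^2\,E$, plus the four terms appearing on the right of \eqref{eq:difference_L^2}.

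The tools are integration by parts to move $x$-derivatives onto factors that either vanish at the endpoints or have zero mean, the one-dimensional bounds $\|w\|_{L^\infty}\le\|w_x\|_{L^2}$ and $\|w\|_{L^2}\lesssim\|w_x\|_{L^2}$ for such $w$ (in particular $\|\bpv\|_{L^\infty}\le\|(\bpv)_x\|_{L^2}$ and $\|\psi\|_{L^\infty}\le\|\psi_x\|_{L^2}$, as in the proof of Proposition \ref{prop:property_modified_energy}), weighted Young's inequalities, and a second-order Taylor expansion for $III$. Concretely: for $II$, write $\buv(\buv)_x=\frac12(\buv^2)_x$, integrate by parts onto $\psi_x$, and apply Young to get $\frac18\|\psi_x\|_{L^2}^2+C\eps^2\intx\buv^4$; for $IV$, integrate by parts onto $(\buv)_x$ and use $\|\eps^2\bpv\|_{L^\infty}\le\eps^2\|(\bpv)_x\|_{L^2}$, $\|\psi\|_{L^\infty}\le\|\psi_x\|_{L^2}$, $\|(1+\eps^2\bpv)^{-1}\|_{L^\infty}\ge1$ and Young to obtain $\frac14\|\psi_x\|_{L^2}^2+C\eps^4\|(1+\eps^2\bpv)^{-2}\|_{L^\infty}^2\|(\bpv)_x\|_{L^2}^2\|(\buv)_x\|_{L^2}^2$; for $III$, let $G$ be the antiderivative with $G'(y)=(1+\eps^2y)^{\gamma-2}-1$, so that $G(0)=G'(0)=0$ and $G''(y)=\eps^2(\gamma-2)(1+\eps^2y)^{\gamma-3}$, hence $[(1+\eps^2\bpv)^{\gamma-2}-1](\bpv)_x=(G(\bpv))_x$, and after integrating by parts onto $\psi_x$ the bound $|G(\bpv)|\lesssim|\gamma-2|\eps^2\|(1+\eps^2\bpv)^{\gamma-3}\|_{L^\infty}\bpv^2$ from Taylor's theorem yields $\frac18\|\psi_x\|_{L^2}^2$ plus the term with $\|(1+\eps^2\bpv)^{\gamma-3}\|_{L^\infty}\intx|\bpv|^4$; finally, for $I$, split $(\bpv\buv)_x=(\bpv)_x\buv+\bpv(\buv)_x$ and bound each piece by $\gamma\eps\|\phi\|_{L^2}\|(\buv)_x\|_{L^2}\|(\bpv)_x\|_{L^2}$ (via $\|\buv\|_{L^\infty}\le\|(\buv)_x\|_{L^2}$ and $\|\bpv\|_{L^\infty}\le\|(\bpv)_x\|_{L^2}$), then apply a weighted Young's inequality whose weight is tuned so that exactly $\frac\eps\gamma\|(\buv)_x\|_{L^2}^2\,E$ appears, leaving the residual $C\gamma^2\eps\|(\bpv)_x\|_{L^2}^2$.

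Summing the four estimates gives $\ddt E+\frac12\|\psi_x\|_{L^2}^2\le\frac\eps\gamma\|(\buv)_x\|_{L^2}^2\,E+C\,b(t)$, with $b$ the sum of the four integrands on the right of \eqref{eq:difference_L^2}; doubling and running the integrating-factor Gronwall argument, keeping $\|\psi_x\|_{L^2}^2$ on the left and exploiting $e^{-\int_0^s\frac\eps\gamma\|(\buv)_x\|_{L^2}^2\,\d\tau}\ge e^{-\int_0^t\frac\eps\gamma\|(\buv)_x\|_{L^2}^2\,\d\tau}$ for $s\le t$ so that no spurious constants are produced on the left, yields \eqref{eq:difference_L^2}. \textbf{The main obstacle} is the bookkeeping of the defect terms $I$--$IV$: arranging them to close \emph{exactly} against the right-hand side of \eqref{eq:difference_L^2} — above all extracting the precise Gronwall weight $\frac\eps\gamma\|(\buv)_x\|_{L^2}^2$ from the density nonlinearity $I$, and controlling the variable-viscosity defect $IV$, whose piece $\langle\frac{\eps^2\bpv}{1+\eps^2\bpv}\psi_x,(\buv)_x\rangle$ superficially threatens to cancel the dissipation but is in fact of lower order once one uses $\|\eps^2\bpv\|_{L^\infty}\le\eps^2\|(\bpv)_x\|_{L^2}$.
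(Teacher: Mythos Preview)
Your proposal is correct and follows essentially the same approach as the paper's proof: derive the difference system (the noise cancels), test against $(\gamma\phi,\psi)$, observe the acoustic cross-terms cancel by integration by parts, and then estimate the four remainder terms exactly as you describe --- Young's inequality for $II$ and $IV$, the antiderivative/Taylor trick for $III$, and a weighted Young inequality on $I$ tuned to produce the Gronwall factor $\tfrac{\eps}{\gamma}\|(\buv)_x\|_{L^2}^2$. The paper organises these as $I_1,\dots,I_5$ (splitting your $I$ into two pieces and treating $\gamma=1$, $\gamma=2$, and $\gamma>1,\gamma\neq2$ separately in $III$), but the estimates are identical; your unified Taylor treatment of $III$ via $G''(y)=\eps^2(\gamma-2)(1+\eps^2 y)^{\gamma-3}$ is in fact a slightly cleaner packaging of the same computation.
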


\begin{proof}
Taking the difference of the first equation of \eqref{eq:nonlinear} and the first equation of \eqref{linear},
and multiplying it by $(\bpv -\rv)$, we have
\begin{align*}
\frac{1}{2}\big((\bpv-\rv)^2\big)_t + \frac{1}{\eps} (\buv-\vv)_x(\bpv-\rv) = -\eps (\bpv \buv)_x (\bpv-\rv).
\end{align*}
Likewise, taking the difference of the second equation of \eqref{eq:nonlinear} and the second equation of \eqref{linear},
and multiplying it with $(\buv-\vv)$, we obtain
\begin{align*}
&\frac{1}{2}\big((\buv-\vv)^2\big)_t + \frac{\gamma}{\eps} (\bpv-\rv)_x(\buv-\vv) \\
&= (\buv-\vv)_{xx}(\buv-\vv) + \frac{\gamma}{\eps}\big(1-(1+\eps^2 \bpv)^{\gamma-2}\big)(\bpv)_x (\buv-\vv) \\
&\quad -\eps^2 \frac{\bpv (\buv)_{xx}}{1+\eps^2	\bpv} (\buv-\vv) - \eps uu_x(\buv-\vv).
\end{align*}
Summing the two equations and integrating in $x\in (0, 1)$, we obtain
\begin{align*}
&\ddt \big(\gamma \|\bpv-\rv\|_{L^2}^2 + \|\buv-\vv\|_{L^2}^2\big) + 2\|(\buv-\vv)_x\|_{L^2}^2\\
&= -2\gamma \eps \intx (\bpv-\rv)(\bpv)_x \buv \d x -2\gamma\eps \intx (\bpv-\rv)\bpv (\buv)_x \d x\\
& \quad+ \frac{2\gamma}{\eps} \intx \big(1-(1+\eps^2 \bpv)^{\gamma-2}\big)(\bpv)_x (\buv-\vv) \d x \\
& \quad+\eps\intx \buv^2(\buv-\vv)_x \d x-2\eps^2 \intx \frac{\bpv}{1+\eps^2\bpv}(\buv)_{xx}(\buv-\vv)\d x\\
&=: \sum_{i=1}^5 I_i.
\end{align*}
Notice that
\begin{align*}
I_1+I_2
&=-2\gamma\eps \intx (\bpv-\rv)(\bpv)_x \buv \d x -2\gamma\eps \intx (\bpv-\rv)\bpv (\buv)_x \d x\\
&\leq 2\gamma\eps\|\bpv-\rv\|_{L^2} \|(\bpv)_x\|_{L^2} \|\buv\|_{L^\infty} + 2\gamma\eps\|\bpv-\rv\|_{L^2} \|\bpv\|_{L^\infty} \|(\buv)_x\|_{L^2}\\
&\leq 4 \gamma\eps\|\bpv-\rv\|_{L^2} \|(\bpv)_x\|_{L^2} \|(\buv)_x\|_{L^2}\\
&\leq \eps\|(\buv)_x\|_{L^2}^2\|\bpv-\rv\|_{L^2}^2 + C\gamma^2 \eps \|(\bpv)_x\|_{L^2}^2.
\end{align*}
Next, we estimate $I_3$. For $\gamma=1$,
\begin{align*}
I_3 &\leq2\eps \left|\intx \frac{\bpv(\bpv)_x}{1+\eps^2 \bpv}(\buv-\vv)\d x\right|\\
&=2\eps\left| \intx \Big(\frac{1}{\eps^2}\bpv-\frac{1}{\eps^4}\log(1+\eps^2\bpv)\Big)_x (\buv-\vv) \d x \right|\\
&=2\eps\left| \intx \Big(\frac{1}{\eps^2}\bpv-\frac{1}{\eps^4}\log(1+\eps^2\p)\Big) (\buv-\vv)_x\d x\right|\\
&\leq \frac{1}{4} \|(\buv-\vv)_x\|_{L^2}^2
  + C\eps^2 \intx \Big(\frac{1}{\eps^2}\bpv-\frac{1}{\eps^4}\log(1+\eps^2\p)\Big)^2 \d x \\
&\leq \frac{1}{4} \|(\buv-\vv)_x\|_{L^2}^2 + C\eps^2 \intx \frac{|\bpv|^4}{(1+\eps^2 \bpv)^2} \d x \\
&\leq \frac{1}{4} \|(\buv-\vv)_x\|_{L^2}^2 + C\eps^2 \|(1+\eps^2 \bpv)^{-1}\|_{L^\infty}^2 \intx |\bpv|^4 \d x,
\end{align*}
where we used here the inequality:
$\frac{1}{\eps^2}x-\frac{1}{\eps^4}\log(1+\eps^2x) \leq \frac{x^2}{1+\eps^2 x}$,
which is true for all $x$ with $1+\eps^2 x>0$.

\smallskip
\noindent
For $\gamma >1$ and $\gamma \neq 2$, we proceed as
\begin{align*}
I_3 &\leq\frac{2\gamma}{\eps}\left| \intx \big(1-(1+\eps^2 \bpv)^{\gamma-2}\big)(\bpv)_x (\buv-\vv) \d x\right|\\
&=\frac{2\gamma}{\eps^3}\left| \intx \Big(1+\eps^2\bpv-\frac{1}{\gamma-1}(1+\eps^2\bpv)^{\gamma-1}\Big)_x (\buv-\vv)
\d x\right|\\
&=\frac{2\gamma}{\eps^3} \left|\intx \Big(1+\eps^2\bpv-\frac{1}{\gamma-1}(1+\eps^2\bpv)^{\gamma-1}\Big)(\buv-\vv)_x \d x\right|\\
&\leq \frac{1}{4} \|(\buv-\vv)_x\|_{L^2}^2
+ \frac{C}{\eps^6} \intx \Big(1+\eps^2 \bpv-\frac{1}{\gamma-1}(1+\eps^2\bpv)^{\gamma-1}\Big)^2 \d x \\
&\leq \frac{1}{4} \|(\buv-\vv)_x\|_{L^2}^2 + C|\gamma-2| \eps^2 \|(1+\eps^2 \bpv)^{\gamma-3}\|_{L^\infty} \intx |\bpv|^4 \d x.
\end{align*}
Lastly, for $\gamma =2$, $I_3=0$.

\smallskip
\noindent
In all cases, we have
\begin{equation*}
I_3 \leq \frac{1}{4} \|(\buv-\vv)_x\|_{L^2}^2
+ C|\gamma-2| \eps^2\, \|(1+\eps^2 \bpv)^{\gamma-3}\|_{L^\infty} \intx |\bpv|^4 \d x.
\end{equation*}

The fourth integral can be bounded as
\begin{equation}
I_4=\eps\intx \buv^2(\buv-\vv)_x \d x\leq \frac{1}{8}  \|(\buv-\vv)_x\|_{L^2}^2 + C\eps^2 \intx |\buv|^4 \d x.
\end{equation}

Finally, we have
\begin{align*}
I_5 &= -2\eps^2 \intx \frac{\bpv}{1+\eps^2\bpv}(\buv)_{xx}(\buv-\vv)\d x\\
&=2\eps^2 \intx \frac{(\bpv)_x}{(1+\eps^2\bpv)^2}(\buv)_{x}(\buv-\vv)\d x+2\eps^2 \intx \frac{\bpv}{1+\eps^2\bpv}(\buv)_{x}(\buv-\vv)_x\d x\\
&\leq \frac{1}{16}\|\buv-\vv\|_{L^\infty}^2 + C \eps^4 \|(1+\eps^2 \bpv)^{-2}\|_{L^\infty}^2\|(\bpv)_x\|_{L^2}^2 \|(\buv)_x\|_{L^2}^2 \\
&\quad \, + \frac{1}{16} \|(\buv-\vv)_x\|_{L^2}^2 + C\eps^4 \|(1+\eps^2 \bpv)^{-1}\|_{L^\infty}^2 \|\bpv\|_{L^\infty}^2  \|(\buv)_x\|_{L^2}^2  \\
&\leq \frac{1}{8}\|(\buv-\vv)_x\|_{L^2}^2 + C \eps^4 \|(1+\eps^2 \bpv)^{-1}\|_{L^\infty}^2 \|(\buv)_x\|_{L^2}^2 \big(\|(\bpv)_x\|_{L^2}^2 + \|\bpv\|_{L^\infty}^2\big) \\
&\leq \frac{1}{8}\|(\buv-\vv)_x\|_{L^2}^2 + C \eps^4 \|(1+\eps^2 \bpv)^{-2}\|_{L^\infty}^2 \|(\buv)_x\|_{L^2}^2\|(\bpv)_x\|_{L^2}^2.
\end{align*}

Combining all the above, we conclude the following inequality almost surely:
\begin{align*}
& \ddt \Big(\gamma\|\bpv-\rv\|_{L^2}^2 + \|\buv-\vv\|_{L^2}^2\Big) +\|(\buv-\vv)_x\|_{L^2}^2\\[1mm]
&\leq \eps\|(\buv)_x \|_{L^2}^2\|\bpv-\rv\|_{L^2}^2  +  C\eps^4 \|(1+\eps^2 \bpv)^{-2}\|_{L^\infty}^2 \|(\bpv)_x \|_{L^2}^2 \|(\buv)_x\|_{L^2}^2 \\
&\quad + C|\gamma-2|\eps^2 \|(1+\eps^2 \bpv)^{\gamma-3}\|_{L^\infty} \intx |\bpv|^4 \d x +  C \gamma^2 \eps \|(\bpv)_x\|_{L^2}^2 +C  \eps^2 \intx |\buv|^4 \d x\\
&\leq \frac{\eps}{\gamma}\|(\buv)_x \|_{L^2}^2\big(\gamma\|\bpv-\rv\|_{L^2}^2 + \|\buv-\vv\|_{L^2}^2\big)\\
&\quad + C \gamma^2 \eps \|(\bpv)_x\|_{L^2}^2+C \eps^2 \intx |\buv|^4 \d x
+  C\eps^4 \|(1+\eps^2 \bpv)^{-2}\|_{L^\infty}^2 \|(\bpv)_x \|_{L^2}^2 \|(\buv)_x\|_{L^2}^2  \\
&\quad + C|\gamma-2|\eps^2 \|(1+\eps^2 \bpv)^{\gamma-3}\|_{L^\infty}\intx |\bpv|^4 \d x. \label{eq:diff_L^2_gronwall}
\end{align*}
Note that, by estimate \eqref{eq:expectation_exp_u_delta} in Proposition \ref{prop:expectation_non_linear},
there exists $C>0$ such that, for any sufficiently small $\eps>0$ and
all $t>0$,
\begin{align*}
&\ex \Big[\int_0^t \frac{\eps}{\gamma}\|(\buv)_x \|_{L^2}^2\d s\Big]
\leq C \big(\eps^{\alpha-2} + \eps^{2\beta-2} \big)< \infty.
\end{align*}
This implies that $\frac{\eps}{\gamma}\|(\buv)_x \|_{L^2}^2$ is locally integrable in time almost surely for sufficiently
small $\eps>0$. The Gronwall's inequality applied to \eqref{eq:diff_L^2_gronwall} gives the desired result.
\end{proof}

\medskip
We now complete the proof of Theorem \ref{thm:finite_time_convergence}.
Fix $T > 0$. Since  \eqref{eq:difference_L^2} is valid for all $0 \leq t \leq T$, we take
the expectation and use the Cauchy-Schwartz inequality to obtain
\begin{align}
&\ex \Big[\supt \big(\gamma\|\bpv(t)-\rv(t)\|_{L^2}^2 + \|\buv(t)-v(t)\|_{L^2}^2\big)\Big]\nonumber \\
&\leq  \bigg(\ex\Big[\exp \Big( \frac{2}{\gamma}\intT \eps\|(\buv)_x \|_{L^2}^2  \d t\Big)\Big]\bigg)^{\frac{1}{2}}
\bigg(\ex \Big[\big(\gamma \|\bpv(0)-\rv(0)\|_{L^2}^2 + \|\buv(0)-v(0)\|_{L^2}^2\big)^2\Big]\bigg)^{\frac{1}{2}}\nonumber\\
&\quad + \bigg(\ex\Big[\exp \Big( \frac{2}{\gamma}\intT \eps\|(\buv)_x \|_{L^2}^2  \d t\Big)\Big]\bigg)^{\frac{1}{2}}\nonumber\\
&\qquad\, \times\bigg(\ex\Big[ \Big(C \eps^2 \intT \intx |\buv|^4 \d x  \d t
+ C\eps^4 \intT \|(1+\eps^2 \bpv)^{-2}\|_{L^\infty}^2 \|(\bpv)_x \|_{L^2}^2 \|(\buv)_x\|_{L^2}^2 \d x  \d t\nonumber\\
& \qquad\qquad\qquad + C |\gamma-2|\eps^2 \intT \|(1+\eps^2 \bpv)^{\gamma-3}\|_{L^\infty} \intx |\bpv|^4 \d x  \d t
+  C\gamma^2 \eps \intT \|(\bpv)_x\|_{L^2}^2  \d t \Big)^2\Big]\bigg)^{\frac{1}{2}}. \\ \label{eq:L^2_diff_exp_control}
\end{align}
We can control the expression on the right-hand side by using the estimates on
the derivation variables obtained in Proposition \ref{prop:expectation_non_linear}. To begin with, since $\alpha>1$ and $\beta> \frac{1}{2}$,
one may take $\eps$ to be sufficiently small such that the choice $\delta =\frac{2 \eps}{\gamma}$ satisfies constraint \eqref{eq:expectation_exp_u_delta}.
Hence, Proposition \ref{prop:expectation_non_linear} implies that, for all sufficiently small $\eps >0$,
\begin{align*}
\ex\Big[\exp \Big( \frac{2}{\gamma}\intT \eps\|(\buv)_x \|_{L^2}^2  \d t\Big)\Big]
\leq C.
\end{align*}
Next, using also Proposition \ref{prop:expectation_non_linear}, we have
\begin{align*}
 \ex \bigg[ \Big( \intT \intx \buv^4 \d x \d t\Big)^2\bigg]
&\leq     \ex \bigg[\Big(\int_0^T \|\buv\|_{L^\infty}^2\|\buv\|_{L^2}^2 \d t \Big)^2\bigg] \\
&\leq \ex \bigg[\Big(\sup_{0 \leq t \leq T}\|\buv\|_{L^2}^2    \int_0^T \|(\buv)_x\|^2_{L^2}  \d t\Big)^2\bigg]\\
&\leq  \bigg(\ex \Big[\Big(\sup_{0 \leq t \leq T}\|\buv\|_{L^2}^2  \Big)^4\Big]\bigg)^{\frac{1}{2}}
\bigg(\ex \Big[ \Big(\int_0^T\|(\buv)_x\|^2_{L^2}  \d t\Big)^4\Big]\bigg)^{\frac{1}{2}}\\
&\leq C   \big(\eps^{\alpha-2} + \eps^{2\beta-2} \big)^4.
\end{align*}
Likewise, Proposition \ref{prop:expectation_non_linear} also gives
\begin{align*}
& \ex \bigg[\Big(\intT \|(1+\eps^2 \bpv)^{-2}\|_{L^\infty}^2 \|(\bpv)_x \|_{L^2}^2 \|(\buv)_x\|_{L^2}^2\d t \Big)^2\bigg] \\
&\leq \ex \bigg[ \Big(\supt \|(1+\eps^2 \bpv)^{-2}\|_{L^\infty}^2    \supt \|(\bpv)_x \|_{L^2}^2    \intT \|(\buv)_x\|_{L^2}^2 \d t\Big)^2 \bigg] \\	
&\leq \bigg(\ex \Big[\Big( \supt \|(1+\eps^2 \bpv)^{-2}\|_{L^\infty}^2\Big)^6\Big] \bigg)^{\frac{1}{3}}
\bigg( \ex \Big[\Big( \supt \|(\bpv)_x \|_{L^2}^2 \Big)^{6}\Big]\bigg)^{\frac{1}{3}}
\bigg(\ex \Big[\Big(\intT \|(\buv)_x \|_{L^2}^2 \d t\Big)^{6}\Big]\bigg)^{\frac{1}{3}}\\
&\leq C \eps^{-4}  \big(\eps^{\alpha-2} + \eps^{2\beta-2} \big)^4.
\end{align*}
By Proposition \ref{prop:expectation_non_linear} again, we have
\begin{align*}
&\ex \bigg[ \Big(\intT  \|(1+\eps^2 \bpv)^{\gamma-3}\|_{L^\infty} \intx |\bpv|^4 \d x \d t\Big)^2 \bigg]\\
&\leq    \ex \bigg[ \Big( \supt \|(1+\eps^2 \bpv)^{\gamma-3}\|_{L^\infty}  \intT  \|\bpv\|_{L^\infty}^2\|\bpv\|_{L^2}^2 \d t\Big)^2\bigg] \\
&\leq  \ex \bigg[  \Big(\supt \|(1+\eps^2 \bpv)^{\gamma-3}\|_{L^\infty} \supt \|\bpv\|_{L^2}^2  \intT  \|(\bpv)_x\|_{L^2}^2   \d t \Big)^2\bigg]\\
&\leq  \bigg(\!\ex \Big[  \supt \|(1+\eps^2 \bpv)^{\gamma-3}\|_{L^\infty}^6 \Big] \bigg)^{\frac{1}{3}}
\!\bigg(\! \ex \Big[ \Big(\supt \| \bpv\|_{L^2}^2 \d x \Big)^{\! 6}\Big] \bigg)^{\frac{1}{3}}
\!\bigg( \!\ex\Big[ \Big(\intT  \|(\bpv)_x\|_{L^2}^2   \d t\Big)^{\!\!6}\Big]\bigg)^{\frac{1}{3}}\\
&\leq C   \big(\eps^{\alpha-2}+ \eps^{2\beta-2} \big)^4.
\end{align*}
Finally, a further application of Proposition \ref{prop:expectation_non_linear}  entails
\begin{align*}
&\ex \bigg[  \Big(\intT \|(\bpv)_x\|_{L^2}^2 \d t\Big)^2 \bigg]
\leq C  \big(\eps^{\alpha-2} +  \eps^{2\beta-2} \big)^2.
\end{align*}
Combining all of the above shows that the right-hand side of \eqref{eq:L^2_diff_exp_control} can be
controlled as stated in \eqref{eq:theorem_1_L^2_diff_exp}, so that the proof of Theorem \ref{thm:finite_time_convergence} is now completed.

\section{Convergence of Invariant Measures} \label{section:convergence_invariant_measures}
In this section, we consider the convergence of invariant measures of system \eqref{eq:nonlinear}
to the corresponding invariant measures of system \eqref{linear} as $\eps \to 0$, which leads
to the complete proof of Theorem \ref{thm:convergence_invariant_measures}.
We restrict to the case that $\gamma=1$, the only case that the existence of invariant measures
for the Navier-Stokes Equations \eqref{eq:NS} (and hence \eqref{eq:nonlinear}) has been known so far.

\subsection{Invariant measures for the fluctuation system}
Let
\begin{equation*}
\bXv = \Big\{ (\bpv, \buv) \in  H^1(0, 1) \times H^1_0(0, 1)\, :\,
\intx \bpv \d x = 0, \,\, 1+\eps^2 \bpv >0 \Big\}
\end{equation*}
be the state space for system \eqref{eq:nonlinear}.
Following \cite{michele}, we equip $\bXv$ with the $L^2 \times L^2$ metric (see Remark \ref{remark:different_norms}).
The Markov semigroup $(\bar{\mathcal{P}}_t^\eps)_{t \geq 0}$ associated to
system \eqref{eq:nonlinear} is a collection of operators
$\bar{\mathcal{P}}_t^\eps : \mathcal{M}_b(\bXv) \to \mathcal{M}_b(\bXv)$ on the set of all bounded Borel measurable functions on $\bXv$ given by
\begin{align*}
\bar{\mathcal{P}}_t^\eps(\phi)(\bp_{0, \eps}, \bu_{0, \eps})
= \ex \big[\phi (\bpv(t, \cdot), \buv(t, \cdot))\big]
\end{align*}
for all $\phi \in  \mathcal{M}_b(\bXv)$ and $(\p_{0,\eps}, u_{0,\eps}) \in \bXv$,
where $(\bpv, \buv)$ is the solution of problem \eqref{ID-1}--\eqref{BC-1} for system \eqref{eq:nonlinear}
with initial data $(\bp_{0, \eps}, \bu_{0, \eps}) \in\bXv$.
A Borel probability measure $\bmv$ on $\bXv$ is said to be an invariant measure of
semigroup $(\bar{\mathcal{P}}_t^\eps)_{t \geq 0}$ if
\begin{equation*}
(\bar{\mathcal{P}}_t^\eps)^* \bmv = \bmv \qquad \text{for all }t \geq 0,
\end{equation*}
where $(\bar{\mathcal{P}}_t^\eps)^* \bmv$ is the push-forward measure of $\bmv$ by $(\bar{\mathcal{P}}_t^\eps)^*$.
Equivalently, the probability measure $\bmv$ satisfies
\begin{equation*}
\int_{\bXv} \bar{\mathcal{P}}_t^\eps(\phi) \d \bmv = \int_{\bXv} \phi \d \bmv
\qquad\,\, \mbox{for all $t \geq 0$ and $\phi \in \mathcal{M}_b(\bXv)$}.
\end{equation*}
An invariant measure represents the law of a statistically stationary solution.

We briefly describe the main result in \cite{michele}, which implies the existence of an invariant probability
measure $\bmv$ for $(\bar{\mathcal{P}}_t^\eps)_{t \geq 0}$.
In \cite{michele},
the Naiver-Stokes equations \eqref{eq:NS} are considered in the phase space
\begin{align*}
\X = \Big\{ (\p, u) \in H^1(0,1) \times H^1_0(0, 1)\,:\, \intx \p \d x =1, \,\,\p >0 \Big\}
\end{align*}
equipped with the $L^2 \times L^2$ norm. For each $\eps>0$,
it has been proved that there exists an invariant measure $\mu_\eps$
for the corresponding Markov semigroup $(\mathcal{P}_t^\eps)_{t \geq 0}$.
Now we can define a bijection $\mathcal{H}_\eps: \bXv \to \X$ by
\begin{equation*}
\mathcal{H}_\eps(\bpv, \buv) = (1+\eps^2 \bpv, \,\eps \buv).
\end{equation*}
Then $\bmv := (\mathcal{H}_\eps^{-1})^* \mu_\eps$ is an invariant measure for
semigroup $(\bar{\mathcal{P}}_t^\eps)_{t \geq 0}$. Indeed, for all $\phi \in \mathcal{M}_b (\bXv)$,
\begin{align*}
\int_{\bXv} \bar{\mathcal{P}}_t^\eps \phi(\bpv, \buv) \d \bmv
&= \int_{\X} \bar{\mathcal{P}}_t^\eps \phi\big(\mathcal{H}_\eps^{-1}(\pv, \uv)\big) \d \mu_\eps
= \int_{\X} \mathcal{P}_t^\eps\big(\phi \circ \mathcal{H}_\eps^{-1}\big) (\pv, \uv) \d \mu_\eps\\
&= \int_{\X} \phi \circ \mathcal{H}_\eps^{-1}(\pv, \uv) \d \mu_\eps =  \int_{\bXv} \phi(\bpv, \buv) \d \bmv.
\end{align*}

\begin{remark} \label{remark:different_norms}
The state space $\X$ considered in \cite{michele}, equipped with the $L^2 \times L^2$ norm, is a non-complete metric space.
Nonetheless, one can establish an \emph{approximate Feller} property for  $\mathcal{P}_t^\eps$, which in turn facilitates the
construction of an invariant measure. However, since the $L^2 \times L^2$ topology and the $H^1 \times H^1_0$ topology generate the same
Borel $\sigma$-algebra on $\X$, the invariance of a Borel probability measure with respect
to $\mathcal{P}_t^\eps$ is not affected by the norm chosen.
\end{remark}

\subsection{Invariant measures for the acoustic system}
The state space for system \eqref{linear} is given in \eqref{eq:state_space_acoustic} by
\begin{align*}
\bX = \Big\{ (r, v) \in  H^1(0, 1) \times H^1_0(0, 1)\,: \,\intx r \d x = 0\Big\}.
\end{align*}
We may equip the space, $\bX$, with either the $L^2 \times L^2$ norm or the $H^1 \times H^1_0$
norm\footnote{For the same reason as in Remark \ref{remark:different_norms},
the choice of the norm does not affect the invariance of a Borel probability measure.},
and we denote them by $\bX_{L^2}$ and $\bX_{H^1}$ respectively when we want to emphasis the choice of the metric.
Similar to the above case, given the existence and uniqueness of solutions to system \eqref{linear},
there is a well-defined semigroup $(\mathcal{Q}_t^{\eps})_{t \geq 0}$ associated to \eqref{linear}  in $\bX$
for all $\eps>0$.
More specifically, for all $t \geq 0$,
map $\mathcal{Q}_t^{\eps}: \mathcal{M}_b(\bX) \to \mathcal{M}_b(\bX)$
is given by
\begin{align*}
\mathcal{Q}_t^{\eps}(\phi)(r_{0, \eps}, v_{0, \eps}) = \ex \phi (\rv(t, \cdot), \vv(t, \cdot))
\end{align*}
for all $\phi \in  \mathcal{M}_b(\bX)$ and $(r_{0, \eps}, v_{0, \eps}) \in \bX$, where
$(\rv, \vv)$ is the solution of problem \eqref{ID-2}--\eqref{BC-2} for system \eqref{linear}
with initial data $(r_{0, \eps}, v_{0, \eps})$. Moreover, there is a unique invariant
measure $\omega_\eps$ for $(\mathcal{Q}_t^{\eps})_{t \geq 0}$ (see Corollary \ref{cor_exp_contraction_invar_measure} below).

In order to compare the invariant measures between systems \eqref{eq:nonlinear} and \eqref{linear},
we want to embed $\bXv$ into $\bX$.
Note that $\bar{\mathcal{X}}_{\eps_2} \subset \bar{\mathcal{X}}_{\eps_1}$ for $0 < \eps_1 < \eps_2$.
In fact, we have the following relation:
\begin{align*}
\bX = \lim_{\eps \to 0} \bXv := \bigcup_{\eps >0} \bXv.
\end{align*}
As a result, for any probability measure $\nu$ on $\bX$ for $\eps >0$,
we may extend it to be a probability measure $\tilde{\nu}$ on $\bX$
by defining for all Borel measurable set $A \subseteq \bX$,
\begin{align}\label{eq:measure_extension}
\tilde{\nu}(A) = \nu (A \cap \bXv).
\end{align}
In the sequel, we omit the tilde sign and abuse the notation $\nu$ to
mean its natural extension $\tilde{\nu}$.

\subsection{Contraction property of the acoustic system semigroup}

In this section, we prove a contraction property of semigroup $(\mathcal{Q}_t^{\eps})_{t\geq 0}$,
which will be crucial to translate the finite-time convergence of solutions to the convergence of invariant measures.

Since the solutions are not known to be pathwise contractive in the $L^2 \times L^2$ norm,
we define an $\eps$-dependent norm that coincides with the $L^2 \times L^2$ norm
in the limit $\eps \to 0$ and has the pathwise contraction property.
For all $\eps \in (0,1]$, define the $\eps$-norm $\| \cdot\|_\eps$ on $\bX$ by
\begin{align} \label{eq:def_epsilon_norm}
\| (r, v)\|_\eps^2 = \|(r,\,v)\|_{L^2}^2 + \eps^2 \|(r_x,\,v_x)\|_{L^2}^2.
\end{align}
We now define the Wasserstein metrics for probability measures on $\bX$
by using the metrics on $\bX$.
For a metric $\th$ on $\bX$,
let\footnote{Here, we denote the set of all Borel probability measures on $\bX$ by $\mathfrak{P}(\bX)$.}
\begin{equation*}
 \mathfrak{P}_\th = \Big\{ \bar\eta \in \mathfrak{P}(\bX)\,:\,
  \int_{\bX} \th((0, 0), (r, v)) \d \bar\eta (r, v) < \infty \Big\}.
\end{equation*}
be the collection of Borel probability measures on $\bX$ with finite metric moment.
Then the Wasserstein metric $\mathcal{W}_\th$ associating with $\th$ is defined
for all $\eta_1, \eta_2 \in \mathfrak{P}_\th$ as
\begin{align} \label{def_wasserstein}
\mathcal{W}_{\th} (\eta_1, \eta_2) = \inf_{\bar\eta \in \mathcal{C}(\eta_1, \eta_2)}
\int_{\bX \times \bX} \th\big((r_1, v_1), (r_2, v_2)\big) \d \bar\eta((r_1,v_1), (r_2, v_2)),
\end{align}
where the infimum is taken over the set, $\mathcal{C}(\eta_1, \eta_2)$, of all couplings of $\eta_1$ and $\eta_2$,
{\it i.e.}, all measures on $\mathcal{X} \times \mathcal{X}$ with marginals $\eta_1$ and $\eta_2$.
Note that $\mathcal{W}_\th$ turns $\mathfrak{P}_\th$ into a metric space.
If, in addition, $\th$ is complete, then so is $\mathcal{W}_\th$; see for instance \cite{Villani09}.
We denote by $\mathcal{W}_{L^2}, \mathcal{W}_{H^1}$, and $\mathcal{W}_\eps$
the Wasserstein metrics corresponding to the $L^2 \times L^2$ metric, the $H^1 \times H^1_0$ metric,
and the $\eps$-norm \eqref{eq:def_epsilon_norm}, respectively.
We also write $\mathfrak{P}_{L^2}$, $\mathfrak{P}_{H^1}$, and $\mathfrak{P}_\eps$
for the corresponding collections of probability measures.
Moreover, we remark that the infimum in \eqref{def_wasserstein} can be attained
for all the three metrics $\mathcal{W}_{L^2}, \mathcal{W}_{H^1}$,
and $\mathcal{W}_\eps$ ({\it cf.}  \cite{Villani09}*{Theorem 4.1}).
Moreover, it follows from the Monge–Kantorovich duality that
\begin{align} \label{def_monge}
\mathcal{W}_{\th} (\eta_1, \eta_2)
= \sup_{\textrm{Lip}_{\th} (\phi)\leq 1 } \left| \int_{\bX} \phi \d \eta_1 - \int_{\bX} \phi \d\eta_2 \right|,
\end{align}
where the supremum is taken over the set of all Lipschitz functions $\phi: \bX \to \mathbb{R}$
with Lipschitz constants less than $1$ when $\bX$ is equipped with metric $\th$.

We can establish the following contraction property for semigroup $\mathcal{Q}_t^{\eps}$ on $\bX_{H^1}$,
as well as on $\bX$ equipped with the $\eps$-norm, which may be
seen as a simple version of  \cite{hairer2008spectral}*{Theorem 2.5}.
The main ingredient is the exponential contraction properties of solutions
expressed in Corollary \ref{cor_linear_exponential_approach}.

\begin{proposition} \label{prop:exp_contraction}
Let $\eps \in (0, 1]$. Then, for all $t \geq 0$
and for probability measures $\eta_1$ and $\eta_2$ on $\bX$,
\begin{align}
&\mathcal{W}_{H^1} ((\mathcal{Q}_t^{\eps})^* \eta_1, (\mathcal{Q}_t^{\eps})^* \eta_2)
\leq  2 \sqrt{2}\e^{-\frac{t}{8}} \mathcal{W}_{H^1} (\eta_1, \eta_2),\label{eq:contra1}\\
&\mathcal{W}_{\eps} ((\mathcal{Q}_t^{\eps})^* \eta_1, (\mathcal{Q}_t^{\eps})^* \eta_2)
\leq  \sqrt{6}\e^{-\frac{t}{4}} \mathcal{W}_{\eps} (\eta_1, \eta_2),\label{eq:contra2}
\end{align}
where $(\mathcal{Q}_t^{\eps})^*\bar\eta $ is the push-forward measure of $\bar\eta$ by $\mathcal{Q}_t^\eps$.
\end{proposition}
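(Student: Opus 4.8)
The plan is to use a synchronous coupling of system \eqref{linear}: two copies of the acoustic system driven by the \emph{same} realization of the Wiener processes $(W^k)_{k \geq 1}$, so that their difference satisfies the deterministic system \eqref{eq:main_linear_deterministic}, which decays pathwise by Corollary \ref{cor_linear_exponential_approach}. Averaging this pathwise decay over a coupling of the two initial laws, and then taking an infimum over couplings, will produce the claimed Wasserstein contraction. I would prove \eqref{eq:contra1} in detail; \eqref{eq:contra2} is then obtained by the same argument with the $H^1 \times H^1_0$ norm replaced by the $\eps$-norm \eqref{eq:def_epsilon_norm}. One may assume $\mathcal{W}_{H^1}(\eta_1, \eta_2) < \infty$, since otherwise there is nothing to prove. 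Fix $t \geq 0$. Since $\mathcal{Q}_t^\eps$ depends only on the transition probabilities of \eqref{linear}, one is free to work on a possibly enlarged filtered probability space: given any coupling $\bar\eta \in \mathcal{C}(\eta_1, \eta_2)$, take a pair $\big((r_{1,0}, v_{1,0}), (r_{2,0}, v_{2,0})\big)$ of $\mathcal{F}_0$-measurable $\bX$-valued random variables with joint law $\bar\eta$, independent of $(W^k)_{k \geq 1}$, and let $(r_i, v_i)$, $i = 1, 2$, be the associated adapted strong solutions of problem \eqref{ID-2}--\eqref{BC-2} furnished by Proposition \ref{prop:existence_linear}. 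By the definition of the semigroup, $(r_i(t, \cdot), v_i(t, \cdot))$ has law $(\mathcal{Q}_t^\eps)^* \eta_i$, so the joint law of $\big((r_1, v_1)(t, \cdot), (r_2, v_2)(t, \cdot)\big)$ is a coupling of $(\mathcal{Q}_t^\eps)^* \eta_1$ and $(\mathcal{Q}_t^\eps)^* \eta_2$.

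Since the noise in \eqref{linear} is additive, the difference $(r_1 - r_2,\, v_1 - v_2)$ solves \eqref{eq:main_linear_deterministic} pathwise with initial data $(r_{1,0} - r_{2,0},\, v_{1,0} - v_{2,0})$, and Corollary \ref{cor_linear_exponential_approach} yields, almost surely,
\begin{equation*}
\big\|(r_1 - r_2,\, v_1 - v_2)(t, \cdot)\big\|_{H^1} \leq 2\sqrt{2}\,\e^{-\frac{t}{8}}\,\big\|(r_{1,0} - r_{2,0},\, v_{1,0} - v_{2,0})\big\|_{H^1} .
\end{equation*}
As the time-$t$ joint law above is a particular coupling of the two pushforwards, the definition \eqref{def_wasserstein} of $\mathcal{W}_{H^1}$ bounds its value by the expectation of the left-hand side, so that
\begin{equation*}
\mathcal{W}_{H^1}\big((\mathcal{Q}_t^\eps)^* \eta_1, (\mathcal{Q}_t^\eps)^* \eta_2\big) \leq 2\sqrt{2}\,\e^{-\frac{t}{8}}\,\ex\big[\big\|(r_{1,0} - r_{2,0},\, v_{1,0} - v_{2,0})\big\|_{H^1}\big] .
\end{equation*}
Taking the infimum over $\bar\eta \in \mathcal{C}(\eta_1, \eta_2)$ turns the right-hand side into $2\sqrt{2}\,\e^{-t/8}\,\mathcal{W}_{H^1}(\eta_1, \eta_2)$, which is \eqref{eq:contra1}. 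For \eqref{eq:contra2} one repeats the argument verbatim, using instead the second estimate of Corollary \ref{cor_linear_exponential_approach}, which in the notation of \eqref{eq:def_epsilon_norm} reads $\|(r_1 - r_2, v_1 - v_2)(t, \cdot)\|_\eps \leq \sqrt{6}\,\e^{-t/4}\,\|(r_{1,0} - r_{2,0}, v_{1,0} - v_{2,0})\|_\eps$.

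I expect the only non-routine point to be the coupling construction rather than any estimate: one must realize an arbitrary coupling of the two initial laws by $\mathcal{F}_0$-measurable random variables independent of the driving noise, and then verify --- via Proposition \ref{prop:existence_linear} and the definition of $\mathcal{Q}_t^\eps$ --- that the corresponding adapted solutions have time-$t$ laws equal to the prescribed pushforwards. Everything else (the exact cancellation of the additive noise in the difference, the pathwise exponential decay, and the passage to the Wasserstein estimate via the infimum over couplings) follows immediately from Corollary \ref{cor_linear_exponential_approach} and the definition \eqref{def_wasserstein} of the Wasserstein metric.
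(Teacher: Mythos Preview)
Your proof is correct and rests on the same key ingredient as the paper's, namely the pathwise exponential decay of Corollary \ref{cor_linear_exponential_approach} applied to the difference of two synchronously coupled solutions. The only stylistic difference is that the paper first treats the case of Dirac initial measures via the Monge--Kantorovich duality \eqref{def_monge} and then integrates the resulting bound over an optimal coupling, whereas you construct the synchronous coupling directly for general initial laws and take the infimum over couplings at the end; your route is slightly more streamlined but equivalent in substance.
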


\begin{proof}
We first show that the conclusion is true when $\eta_1$ and $\eta_2$ are the Dirac measures.
Indeed, let $g_1, g_2 \in \bX$. Then, by the Monge–Kantorovich duality \eqref{def_monge}, we have
\begin{align*}
\mathcal{W}_{H^1} ((\mathcal{Q}_t^{\eps})^* \delta_{g_1}, (\mathcal{Q}_t^{\eps})^* \delta_{g_2})
= \sup_{\textrm{Lip}_{H^1} (\phi)\leq 1 }
\left|\mathcal{Q}_t^{\eps} \phi (g_1)- \mathcal{Q}_t^{\eps} \phi (g_2)\right|.
\end{align*}
Denote by $(\rv(t; g), \vv(t, g))$ the solution of system \eqref{linear}
at time $t \geq 0$ stating at the initial condition: $(r_{0, \eps}, v_{0, \eps})= g$.
Taking any $\phi \in C(\bX_{H_1})$ with $\textrm{Lip}_{H_1} (\phi)\leq 1$, we have
\begin{align*}
\left|\mathcal{Q}_t^{\eps} \phi (g_1)- \mathcal{Q}_t^{\eps} \phi (g_2)\right|
& = \left|\mathbb{E} \big[\phi (\rv(t; g_1), \vv(t, g_1))
  -\phi (\rv(t; g_2), \vv(t, g_2))\big]\right|\\
&\leq \mathbb{E} \Big[ \left(\|\rv(t; g_1)-\rv(t; g_2)\|^2_{H^1}
  + \|\vv(t; g_1)-\vv(t; g_2)\|^2_{H^1} \right)^{\frac{1}{2}}\Big]\\
&\leq 2 \sqrt{2}\e^{-\frac{t}{8}} \|g_1-g_2\|_{H^1 \times H^1_0}\\
&=  2 \sqrt{2}\e^{-\frac{t}{8}}  \mathcal{W}_{H^1} (\delta_{g_1}, \delta_{g_2}).
\end{align*}
The second inequality is due to \eqref{eq:exp_decay_H_1} in Corollary \ref{cor_linear_exponential_approach}.

Next, we prove the general case.
Let $\eta_1$ and $\eta_2$ be two probability measures on the $\bX$.
Take $ \bar\eta \in \mathcal{C}(\eta_1, \eta_2)$ such that the infimum in \eqref{def_wasserstein} is attained.
Then
\begin{align*}
\mathcal{W}_{H^1} ((\mathcal{Q}_t^{\eps})^*\eta_1, (\mathcal{Q}_t^{\eps})^*\eta_2)
&\leq \int_{\mathcal{X} \times \mathcal{X}}
\mathcal{W}_{H^1} ((\mathcal{Q}_t^{\eps})^*\delta_{g_1},
 (\mathcal{Q}_t^{\eps})^*\delta_{g_2}) \d \bar\eta(g_1,g_2)\\
&\leq  2 \sqrt{2}\e^{-\frac{t}{8}} \int_{\mathcal{X} \times \mathcal{X}}
\mathcal{W}_{H^1} (\delta_{g_1}, \delta_{g_2}) \d \bar\eta(g_1, g_2)\\
&=2 \sqrt{2}\e^{-\frac{t}{8}} \mathcal{W}_{H^1} (\eta_1, \eta_2).
\end{align*}
This proves \eqref{eq:contra1}.
The proof of \eqref{eq:contra2} is the same, except
\eqref{eq:exp_decay_epsilon_H1} is used in place of \eqref{eq:exp_decay_H_1}.
\end{proof}

The contraction property implies the existence and uniqueness of the invariant probability measure
for system \eqref{linear}.

\begin{corollary} \label{cor_exp_contraction_invar_measure}
The semigroup $(\mathcal{Q}_t^{\eps})_{t\geq 0}$ admits the unique invariant measure $\omega_\eps$
on $\bX_{H^1}$ such that $\omega_\eps \in \mathfrak{P}_{H^1}$ and, for all $\lambda \in \mathfrak{P}_{H^1}$,
\begin{equation} \label{eq:contraction_omega}
\mathcal{W}_{H^1} ((\mathcal{Q}_t^{\eps})^* \lambda, \omega_\eps)
\leq  2 \sqrt{2}\e^{-\frac{t}{8}} \mathcal{W}_{H^1} (\lambda, \omega_\eps)
\qquad\,\, \mbox{for every $t\geq 0$}.
\end{equation}
\end{corollary}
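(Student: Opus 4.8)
The plan is to apply the Banach fixed point theorem to the time-$t_0$ transition map $(\mathcal{Q}_{t_0}^{\eps})^*$, for a conveniently large fixed $t_0$, on the complete metric space $(\mathfrak{P}_{H^1}, \mathcal{W}_{H^1})$, and then to upgrade the resulting fixed point to an invariant measure for the whole semigroup. First I would record the abstract framework: $\bX_{H^1}$ is a closed subspace of the separable Hilbert space $H^1(0,1)\times H^1_0(0,1)$, hence Polish, so $(\mathfrak{P}_{H^1}, \mathcal{W}_{H^1})$ is a complete metric space by \cite{Villani09}; it is non-empty since $\delta_{(0,0)}\in\mathfrak{P}_{H^1}$. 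By Proposition \ref{prop:exp_contraction}, as soon as $2\sqrt 2\,\e^{-t_0/8}<1$ the map $(\mathcal{Q}_{t_0}^{\eps})^*$ contracts $\mathcal{W}_{H^1}$ by a factor strictly less than $1$, so once we know that it maps $\mathfrak{P}_{H^1}$ into itself, the Banach fixed point theorem produces a unique $\omega_\eps\in\mathfrak{P}_{H^1}$ with $(\mathcal{Q}_{t_0}^{\eps})^*\omega_\eps=\omega_\eps$.

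The step that needs care is the invariance of $\mathfrak{P}_{H^1}$ under $(\mathcal{Q}_t^{\eps})^*$, that is, that starting from a measure with finite $H^1$-moment keeps that moment finite. Here I would use a pathwise comparison: if $(\rv(\cdot;g),\vv(\cdot;g))$ and $(\rv(\cdot;0),\vv(\cdot;0))$ denote the solutions of \eqref{linear} driven by the same Wiener process with initial data $g\in\bX$ and $(0,0)$, then their difference solves the deterministic system \eqref{eq:main_linear_deterministic}, so \eqref{eq:exp_decay_H_1} in Corollary \ref{cor_linear_exponential_approach} gives, almost surely,
\begin{equation*}
\|(\rv(t;g),\vv(t;g))\|_{H^1}\leq \|(\rv(t;0),\vv(t;0))\|_{H^1}+(8\gamma)^{1/2}\e^{-t/8}\|g\|_{H^1}.
\end{equation*}
Taking expectations and integrating against any $\lambda\in\mathfrak{P}_{H^1}$ reduces the claim to the single bound $\ex[\|(\rv(t;0),\vv(t;0))\|_{H^1}]<\infty$ for every $t>0$. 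The $L^2$-part is immediate from the moment estimate \eqref{eq:largepmom} of Proposition \ref{prop:large_time_linear} (with $p=1$) together with the fact that $\mathcal{G}(r,v)$ dominates $\|(r,v)\|_{L^2}^2$ up to a constant; for the $H^1$-part I would combine the first equation of \eqref{linear}, integrated in time to express $(\rv)_x$ in terms of $\int_0^t(\vv)_{xx}\,\d s$, with the stochastic $H^1$-energy balance for \eqref{linear} (the analogue of \eqref{eq:linear_derivation_H1}, which bounds $\ex\int_0^t\|(\vv)_{xx}\|_{L^2}^2\,\d s$ in terms of the data, here zero) and the parabolic regularity $\vv\in L^2_{\mathrm{loc}}(0,\infty;H^2)$ of Proposition \ref{prop:existence_linear}. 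Granting this, fixing $t_0$ with $2\sqrt 2\,\e^{-t_0/8}<1$ yields $\omega_\eps$ as above.

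It then remains to show that $\omega_\eps$ is invariant for all $t$ and is the unique such measure in $\mathfrak{P}_{H^1}$. For invariance, the semigroup property gives, for each $s\geq 0$, $(\mathcal{Q}_{t_0}^{\eps})^*\big((\mathcal{Q}_s^{\eps})^*\omega_\eps\big)=(\mathcal{Q}_s^{\eps})^*\big((\mathcal{Q}_{t_0}^{\eps})^*\omega_\eps\big)=(\mathcal{Q}_s^{\eps})^*\omega_\eps$; since $(\mathcal{Q}_s^{\eps})^*\omega_\eps$ again lies in $\mathfrak{P}_{H^1}$ by the displayed estimate, uniqueness of the fixed point forces $(\mathcal{Q}_s^{\eps})^*\omega_\eps=\omega_\eps$. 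For uniqueness, any invariant measure in $\mathfrak{P}_{H^1}$ is in particular a fixed point of $(\mathcal{Q}_{t_0}^{\eps})^*$, hence equals $\omega_\eps$. Finally, \eqref{eq:contraction_omega} follows by applying \eqref{eq:contra1} of Proposition \ref{prop:exp_contraction} with $\eta_1=\lambda$ and $\eta_2=\omega_\eps$ and using $(\mathcal{Q}_t^{\eps})^*\omega_\eps=\omega_\eps$. I expect the main obstacle to be precisely the verification that $(\mathcal{Q}_t^{\eps})^*$ preserves finite $H^1$-moment: the natural energy $\mathcal{G}$ controls the $L^2$-norm from below but not the $H^1$-norm, so the $H^1$-moment of the reference solution has to be extracted from the higher-order energy identity together with the smoothing of the viscous term; everything else is a routine consequence of the contraction already established in Proposition \ref{prop:exp_contraction}.
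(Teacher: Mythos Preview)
Your approach is essentially the same as the paper's---apply the Banach fixed point theorem on $(\mathfrak{P}_{H^1},\mathcal{W}_{H^1})$ to the transition map at a sufficiently large time, upgrade to invariance for all $t$ via the semigroup property, and read off \eqref{eq:contraction_omega} from Proposition~\ref{prop:exp_contraction}. You are in fact more careful than the paper on one point: you explicitly verify that $(\mathcal{Q}_t^{\eps})^*$ maps $\mathfrak{P}_{H^1}$ into itself, which the paper tacitly assumes. Your commutativity argument for full invariance (showing $(\mathcal{Q}_s^{\eps})^*\omega_\eps$ is itself a fixed point of $(\mathcal{Q}_{t_0}^{\eps})^*$) is also cleaner than the paper's brief appeal to time-continuity.

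There is, however, one genuine gap. The statement claims uniqueness of the invariant measure on $\bX_{H^1}$, not merely within $\mathfrak{P}_{H^1}$, and the paper's proof devotes a separate paragraph to this extension. After obtaining uniqueness in $\mathfrak{P}_{H^1}$, the paper takes an arbitrary invariant $\bar\omega_\eps\in\mathfrak{P}(\bX)$, truncates it to $\bar\omega_\eps^\delta\in\mathfrak{P}_{H^1}$ by conditioning on an $H^1$-ball of radius $M_\delta$, and then argues that $(\mathcal{Q}_t^{\eps})^*\bar\omega_\eps^\delta\to(\mathcal{Q}_t^{\eps})^*\bar\omega_\eps=\bar\omega_\eps$ setwise as $\delta\to 0$ (uniformly in $t$), while $(\mathcal{Q}_t^{\eps})^*\bar\omega_\eps^\delta\to\omega_\eps$ weakly as $t\to\infty$ by \eqref{eq:contraction_omega}; exchanging the two limits yields $\bar\omega_\eps=\omega_\eps$. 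Your proof as written only rules out competing invariant measures already known to have finite $H^1$-moment, so this step is missing.
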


\begin{proof}
Note that, by Proposition \ref{prop:exp_contraction},
map $(\mathcal{Q}_t^{\eps})^*$ is a contraction on $\mathfrak{P}_{H^1}$
for all sufficiently large $t \geq 0$.
Since $H^1 \times H^1_0$ is a complete norm on $\bX$,
$\mathfrak{P}_{H^1}$ is a complete space.
By the contraction mapping principle, for all such $t$,
there exists a probability measure $\omega_\eps^t \in \mathfrak{P}_{H^1}$
such that $(\mathcal{Q}_t^{\eps})^* \omega_\eps^t = \omega_\eps^t$.
The time-continuity of $(\mathcal{Q}_t^{\eps})$ implies
that $\omega_\eps^t = \omega_\eps^s$ for all these sufficiently large $t$ and $s$.
By the semigroup property,  $\omega_\eps = \omega_\eps^t$ is invariant for all $t \geq 0$.
The attraction property \eqref{eq:contraction_omega} is a consequence of Proposition \ref{prop:exp_contraction}.

To complete the proof, we still have to show that the uniqueness holds in the entire space $\mathfrak{P}(\bX)$,
not just in $\mathfrak{P}_{H^1}$.
To see this, let $\bar{\omega}_\eps \in \mathfrak{P}(\bX)$ be an invariant measure
for $(\mathcal{Q}_t^{\eps})_{t\geq 0}$.
Note that, for all $\delta>0$, there exists $M_\delta>0$ such that
\begin{equation*}
 m_\eps^\delta:= \bar{\omega}_\eps\big(\big\{(r,v) \in \bX: \|r_x\|_{L^2}+\|v_x\|_{L^2} > M_\delta \big\}\big) < \delta.
\end{equation*}
Define $\bar{\omega}_\eps^\delta \in \mathfrak{P}_{H^1}$ by
\begin{equation*}
\bar{\omega}_\eps^\delta (A)
= \frac{1}{1-m_\eps^\delta}\int_A \mathbbm{1}_{\{\|r_x\|_{L^2}+\|v_x\|_{L^2} \leq M_\delta\}} \d \bar{\omega}_\eps
\end{equation*}
for all Borel measurable set $A \subseteq \bX$.
On one hand, since $ \bar{\omega}_\eps^\delta$ converges to $\bar{\omega}_\eps$
setwise as $\delta \to 0$, $(\mathcal{Q}_t^{\eps})^*\bar{\omega}_\eps^\delta$
converges to $(\mathcal{Q}_t^{\eps})^*\bar{\omega}_\eps = \bar{\omega}_\eps$
setwise and hence weakly as $\delta \to 0$.
On the other hand, since $\bar{\omega}_\eps^\delta \in \mathfrak{P}_{H^1}$,
\eqref{eq:contraction_omega} implies that
$(\mathcal{Q}_t^{\eps})^*\bar{\omega}_\eps^\delta$ converges
weakly to $\omega$ as $t \to \infty$.
This yields $\bar{\omega}_\eps=\omega_\eps$ by exchanging the order of the two limits,
which is valid since the setwise (and hence weak) convergence of $(\mathcal{Q}_t^{\eps})^*\bar{\omega}_\eps^\delta$
to $\bar{\omega}_\eps$ as $\delta \to 0$ is uniform in $t\geq 0$. This completes the proof.
\end{proof}

We can also deduce further estimates for the invariant measure $\omega_\eps$,
which can be seen as moment bounds for the corresponding statistically stationary solutions.

\begin{proposition} \label{prop:est_invar_measure_linear}
Let $\eps \in (0, 1]$. Let $\omega_\eps$ be the invariant measure of $(\mathcal{Q}_t^{\eps})$.
Then
\begin{align} \label{eq:invms_derivative}
\int_{\bX} \|(r_x,\, v_x)\|_{L^2}^2 \d \omega_\eps \leq \|\sigma\|_{L^\infty}^2 \eps^{2(\beta-1)}.
\end{align}
Moreover, for all $p \geq 1$, there exists $C_p>0$, independent of $\eps$, such that
\begin{align}\label{eq:invmsrpoly}
\int_{\bX} \mathcal{G}(r, v)^p \d \omega_\eps \leq C_p \eps^{2(\beta-1)p} \|\sigma\|_{L^\infty}^{2p}.
\end{align}
Furthermore, for all $\eta\in (0, \,(8\eps^{2(\beta-1)} \|\sigma\|_{L^\infty}^2)^{-1})$,
there exists $C_\eta>0$, independent of $\eps$, such that
\begin{equation}\label{eq:invmsrexp}
\int_{\bX}\exp( \eta \mathcal{G}(r, v)) \d \omega_\eps
\leq C_\eta \exp(2\eta\eps^{2(\beta-1)}).
\end{equation}
\end{proposition}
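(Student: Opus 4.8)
The plan is to transfer the large-time estimates of Proposition~\ref{prop:large_time_linear} onto the invariant measure $\omega_\eps$ by running the acoustic system from the trivial initial datum and passing to the limit $t\to\infty$; the energy-dissipation bound \eqref{eq:invms_derivative} is then extracted separately from the energy balance \eqref{eq:energy_balance_linear} together with stationarity. I would prove \eqref{eq:invmsrpoly} and \eqref{eq:invmsrexp} first, and then use the case $p=1$ of \eqref{eq:invmsrpoly} as the a priori integrability input needed for \eqref{eq:invms_derivative}.

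For \eqref{eq:invmsrpoly}, let $(\rv,\vv)$ solve \eqref{linear} with $(r_{0,\eps},v_{0,\eps})=(0,0)$, so that the law of $(\rv(t,\cdot),\vv(t,\cdot))$ is $(\mathcal{Q}_t^{\eps})^*\delta_0$. Since $\mathcal{G}(0,0)=0$, estimate \eqref{eq:largepmom} gives $\int_{\bX}\mathcal{G}^p\,\d\big((\mathcal{Q}_t^{\eps})^*\delta_0\big)=\ex[\mathcal{G}(\rv(t,\cdot),\vv(t,\cdot))^p]\le C_p\eps^{2(\beta-1)p}\|\sigma\|_{L^\infty}^{2p}$ uniformly in $t\ge 0$. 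On the other hand, $\delta_0\in\mathfrak{P}_{H^1}$ and $\omega_\eps\in\mathfrak{P}_{H^1}$, so Corollary~\ref{cor_exp_contraction_invar_measure} gives $\mathcal{W}_{H^1}\big((\mathcal{Q}_t^{\eps})^*\delta_0,\omega_\eps\big)\to 0$ as $t\to\infty$, hence $(\mathcal{Q}_t^{\eps})^*\delta_0\to\omega_\eps$ weakly on the Polish space $\bX_{H^1}$ (a closed subspace of $H^1\times H^1_0$). Because $(r,v)\mapsto\mathcal{G}(r,v)$ is continuous and nonnegative on $\bX_{H^1}$, the map $\mathcal{G}^p$ is nonnegative and lower semicontinuous there, so the portmanteau theorem yields $\int_{\bX}\mathcal{G}^p\,\d\omega_\eps\le\liminf_{t\to\infty}\int_{\bX}\mathcal{G}^p\,\d\big((\mathcal{Q}_t^{\eps})^*\delta_0\big)\le C_p\eps^{2(\beta-1)p}\|\sigma\|_{L^\infty}^{2p}$. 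The bound \eqref{eq:invmsrexp} follows by the identical argument with $\mathcal{G}^p$ replaced by $\exp(\eta\mathcal{G})$: since $\mathcal{G}(0,0)=0$, the factor $\sqrt{\ex[\exp(2\e^{-t/4}\mathcal{G}(r_{0,\eps},v_{0,\eps}))]}$ in \eqref{eq:largeexpmom} equals $1$, so $\ex[\exp(\eta\mathcal{G}(\rv(t,\cdot),\vv(t,\cdot)))]\le C_\eta\exp(2\eta\eps^{2(\beta-1)})$ for all $t$, and lower semicontinuity of $\exp(\eta\mathcal{G})$ passes this bound to $\omega_\eps$.

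For \eqref{eq:invms_derivative}, let $(\rv,\vv)$ now be the stationary solution, i.e. $(r_{0,\eps},v_{0,\eps})\sim\omega_\eps$, so that $(\rv(t,\cdot),\vv(t,\cdot))\sim\omega_\eps$ for every $t\ge 0$; by the case $p=1$ of the just-proved \eqref{eq:invmsrpoly} we have $\ex[\mathcal{G}(r_{0,\eps},v_{0,\eps})]<\infty$. Starting from \eqref{eq:energy_balance_linear}, I would localize the martingale $Z$ by the stopping times $\tau_n=\inf\{s:\langle Z\rangle_s\ge n\}\uparrow\infty$, take expectations of the stopped identity (so the $Z$-term drops), and pass to the limit using Fatou's lemma and path continuity on the nonnegative left-hand side and monotone convergence on the right, obtaining
\begin{equation*}
\ex[\mathcal{G}(\rv(t,\cdot),\vv(t,\cdot))]+\tfrac12\ex\Big[\intt\intx\big(|(\vv)_x|^2+\gamma|(\rv)_x|^2\big)\d x\d s\Big]\le\ex[\mathcal{G}(r_{0,\eps},v_{0,\eps})]+\tfrac12\eps^{2(\beta-1)}\|\sigma\|_{L^\infty}^2\,t .
\end{equation*}
By stationarity the two $\mathcal{G}$-terms are equal and finite, hence cancel, and the remaining integral equals $\tfrac{t}{2}\int_{\bX}\big(\|v_x\|_{L^2}^2+\gamma\|r_x\|_{L^2}^2\big)\d\omega_\eps$; dividing by $t$ gives $\int_{\bX}\big(\|v_x\|_{L^2}^2+\gamma\|r_x\|_{L^2}^2\big)\d\omega_\eps\le\eps^{2(\beta-1)}\|\sigma\|_{L^\infty}^2$, and since $\gamma\ge 1$ this dominates $\int_{\bX}\|(r_x,v_x)\|_{L^2}^2\d\omega_\eps$, which is \eqref{eq:invms_derivative}.

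The main obstacle is the apparent circularity: using stationarity for moment identities presupposes that the relevant moments are finite, yet those finiteness statements are precisely what is to be proved. This is circumvented by working with the zero–initial-data solution, for which Proposition~\ref{prop:large_time_linear} supplies bounds uniform in $t$ that make no reference to the initial energy, and then transporting these bounds to $\omega_\eps$ via the contraction-driven weak convergence $(\mathcal{Q}_t^{\eps})^*\delta_0\to\omega_\eps$ together with the lower semicontinuity of $\mathcal{G}^p$ and $\exp(\eta\mathcal{G})$ on $\bX_{H^1}$. A secondary technical point — that the stochastic integral $Z$ in \eqref{eq:energy_balance_linear} is a priori only a local martingale, with no obvious integrable bound on its quadratic variation for the stationary solution — is handled by the localization-plus-Fatou device rather than by a direct integrability estimate.
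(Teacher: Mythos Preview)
Your proof is correct and follows essentially the same strategy as the paper: both use the zero-initial-data solution together with Proposition~\ref{prop:large_time_linear} and the contraction-driven convergence $(\mathcal{Q}_t^{\eps})^*\delta_{(0,0)}\to\omega_\eps$ from Corollary~\ref{cor_exp_contraction_invar_measure} to transfer the moment bounds to $\omega_\eps$, and both extract \eqref{eq:invms_derivative} from the stationary energy balance. The only differences are technical: you pass to the limit via lower semicontinuity (portmanteau), whereas the paper truncates to $\min\{\mathcal{G}^p,N\}$ and invokes the Monge--Kantorovich duality for Lipschitz test functions before letting $N\to\infty$; and you reverse the order, proving \eqref{eq:invmsrpoly} first so that the case $p=1$ supplies the finiteness of $\ex[\mathcal{G}]$ needed to cancel the boundary terms in the stationary balance, while the paper appeals directly to $\omega_\eps\in\mathfrak{P}_{H^1}$ for that purpose---your ordering is arguably cleaner, and your explicit localization of $Z$ is more careful than the paper's informal ``integrate against $\omega_\eps$''.
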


\begin{proof}
First, we integrate \eqref{eq:energy_balance_linear} with respect to measure $\omega_\eps$,
which implies that, for all $t \geq 0$,
\begin{equation}
    \int_{\bX} \mathcal{Q}_t^\eps \mathcal{G}(r,v) \d \omega_\eps
    + \frac{1}{2}\intt \int_{\bX} \mathcal{Q}_s^\eps (\|(r_x,\,v_x)\|_{L^2}^2 ) \d \omega_\eps  \d s
    = \int_{\bX} \mathcal{G} (r,v)\d \omega_\eps + \frac{1}{2}\eps^{2(\beta-1)}t \intx \sigma^2 \d x. \label{eq:energy_balance_linear_invar_meas}
\end{equation}
Using the invariance of $\omega_\eps$ and the fact that $\omega_\eps \in \mathfrak{P}_{H^1}$, we have
\begin{align*}
   \int_{\bX} \mathcal{Q}_t^\eps \mathcal{G}(r,v) \d \omega_\eps= \int_{\bX} \mathcal{G} (r,v)\d \omega_\eps < \infty.
\end{align*}
Therefore, \eqref{eq:energy_balance_linear_invar_meas} implies
\begin{align*}
    \frac{1}{2}t\int_{\bX} \|(r_x, \,v_x)\|_{L^2}^2 \d \omega_\eps = \frac{1}{2}\eps^{2(\beta-1)}t \intx \sigma^2 \d x,
\end{align*}
from which \eqref{eq:invms_derivative} follows.
Next, let $\delta_{(0, 0)}$ be the delta measure on $\bX$ centred at $(0, 0) \in \bX$.
By Proposition \ref{prop:large_time_linear}, we see that, for all $p \geq 1$,
there exists $C_p>0$, independent of $\eps$, such that
\begin{equation*}
\lim_{t\to\infty}\int_{\bX} \mathcal{G}(r, v)^p \d (\mathcal{Q}_t^{\eps})^* \delta_{(0, 0)}
= C_p \eps^{2(\beta-1)p} \|\sigma\|_{L^\infty}^{2p}.
\end{equation*}
Likewise,  for all $\eta\in (0, \,(8\eps^{2(\beta-1)} \|\sigma\|_{L^\infty}^2)^{-1})$,
there exists $C_\eta>0$, independent of $\eps$, such that
\begin{equation*}
\lim_{t\to\infty}\int_{\bX} \exp(\eta \mathcal{G}(r, v) \d (\mathcal{Q}_t^{\eps})^* \delta_{(0, 0)}
= C_\eta \exp\big(2\eta\eps^{2(\beta-1)}\big).
\end{equation*}
By Corollary \ref{cor_exp_contraction_invar_measure}, we have
\begin{align*}
\lim_{t\to\infty}\mathcal{W}_{H^1} ((\mathcal{Q}_t^{\eps})^* \delta_{(0, 0)}, \omega_\eps) = 0.
\end{align*}
This can be rephrased in terms of the Monge–Kantorovich duality \eqref{def_monge}.
For all globally Lipschitz function $f: \bX \to \mathbb{R}$,
\begin{align}\label{eq:liminvmsr}
\lim_{t\to\infty}\int_{\bX} f \d (\mathcal{Q}_t^{\eps})^* \delta_{(0, 0)}
= \int_{\bX} f \d \omega_\eps .
\end{align}
Let $p \geq 1$ and $N >0$. Choosing $f(r, v) = \min\{\mathcal{G}(r, v)^p , N\}$ in the above gives
\begin{align*}
\int_{\bX} \min\{\mathcal{G}(r, v)^p , N\} \d \omega_\eps
&= \lim_{t \to \infty} \int_{\bX} \min\{\mathcal{G}(r, v)^p , N\} \d (\mathcal{Q}_t^{\eps})^* \delta_{(0, 0)} \\
&\leq \lim_{t \to \infty} \int_{\bX} \mathcal{G}(r, v)^p \d (\mathcal{Q}_t^{\eps})^* \delta_{(0, 0)}
= C_p \eps^{2(\beta-1)p} \|\sigma\|_{L^\infty}^{2p}.
\end{align*}
Now, letting $N \to \infty$ gives the first estimate \eqref{eq:invmsrpoly}.
Similarly, let $\eta \in (0, (8\eps^{2(\beta-1)} \|\sigma\|_{L^\infty}^2)^{-1})$ and $M >0$.
Choosing $f(r, v) = \min\{\exp( \eta \mathcal{G}(r, v)) , M\}$ in \eqref{eq:liminvmsr} gives
\begin{align*}
\int_{\bX} \min\{\exp( \eta \mathcal{G}(r, v)) , M\} \d \omega_\eps
&= \lim_{t \to \infty} \int_{\bX}\min\{\exp( \eta \mathcal{G}(r, v)), M\} \d (\mathcal{Q}_t^{\eps})^*\delta_{(0, 0)} \\
&\leq \lim_{t \to \infty} \int_{\bX} \exp( \eta \mathcal{G}(r, v))\d (\mathcal{Q}_t^{\eps})^* \delta_{(0, 0)}
= C_\eta \exp\big(2\eta\eps^{2(\beta-1)}\big).
\end{align*}
Now, letting $M \to \infty$ gives the second estimate \eqref{eq:invmsrexp}.
\end{proof}

\subsection{Estimates of the invariant measures} \label{subsect:est_invar_meas}
In order to apply the finite-time convergence result to study the convergence of invariant measures,
we have to estimate the statistically stationary solutions distributed as invariant measures.
In particular, we want to show that they satisfy \eqref{eq:assumption_moment_initial_poly},
which is the main goal of this section.

Since there is a natural correspondence between the invariant measures of systems \eqref{eq:nonlinear} and \eqref{eq:NS},
we can confine our analysis to the latter system.
Recall that the  energy balance \eqref{eq:modified_energy_balance} holds for system \eqref{eq:NS}:
\begin{align}
&\d \cE(\p_\eps, u_\eps)
+\frac{1}{2} \Big(\intx|(\uv)_x|^2 \d x  + \frac{1}{\eps}\intx \pv^{\gamma-3}|(\pv)_x|^2 \d x\Big) \d t \\
&= \frac{1}{2}\eps^{2\beta}\intx\pv \sigma^2 \d x \d t
+ \eps^\beta \intx \big(\pv \uv + \frac{1}{2} \frac{(\pv)_x}{\pv}\big) \sigma \d x \d W_t.
\label{eq:modified_energy_balance_ch3}
\end{align}
To simplify the notation, we write that, for all $(\pv, \uv) \in \X$,
\begin{align*}
&\mathcal{K}(\pv, \uv)= \frac{1}{2} \|(\uv)_x\|_{L^2}^2 + \frac{1}{2 \eps^2}  \|(\log \pv)_x\|_{L^2}^2,\\
&\mathcal{M}(\pv, \uv)= \eps^{\beta} \intx \big(\pv \uv + \frac{1}{2} \frac{(\pv)_x}{\pv}\big)\sigma \d x,\\
&\mathcal{R}(\pv, \uv)=\frac{\eps^{2\beta}}{2} \intx \pv \sigma^2 \d x,
\end{align*}
so that the above equality reads
\begin{equation*}
\d\cE(\pv, \uv) +\mathcal{K}(\pv, \uv)\d t = \mathcal{M}(\pv, \uv) \d W_t +\mathcal{R}(\pv, \uv)\d t.
\end{equation*}
Note here that the terms $ \mathcal{M}(\pv, \uv) \d W_t$ and $\mathcal{R}(\pv, \uv) \d t$
should be interpreted as
\begin{align*}
&\mathcal{M}(\pv, \uv) \d W_t
 = \sum_{k=1}^\infty \eps^{\beta} \intx \big(\pv \uv + \frac{1}{2} \frac{(\pv)_x}{\pv}\big)\sigma_k \d x \d W^k_t, \\
&\mathcal{R}(\pv, \uv) \d t = \sum_{k=1}^\infty \frac{\eps^{2\beta}}{2} \intx \pv \sigma_k^2 \d x \d t.
\end{align*}
We also define
\begin{align*}
\mathcal{M}(\pv, \uv)^2
:= \sum_{k=1}^\infty \Big( \eps^{\beta} \intx \big(\p u + \frac{1}{2} \frac{\p_x}{\p}\big)\sigma_k \d x \Big)^2
\end{align*}
for later use.

As a first step of the estimates, we show that $\cE(\pv, \uv)$ can be controlled
by $\mathcal{K}(\pv, \uv)$, a distinctive feature that is true only for the isothermal pressure law
and has been observed and used in \cites{Hoff99, Hoff00} to study the long-time behavior
of the solutions in the deterministic case.

\begin{proposition} Let $\eps \in (0, 1]$. Then, for all $(\p, u)\in \X$,
\begin{align}\label{eq:bddEK1}
\mathcal{E}(\p, u) \leq \big(\, \frac{3}{2}+2\|\p\|_{L^\infty}^2 + \eps^2 \|\p^{-1}\|_{L^\infty} \big)\mathcal{K}(\p, u),
\end{align}
which implies
\begin{align}\label{eq:bddEK2}
\mathcal{E}(\p, u) &\leq \big(\frac{3}{2}+2\exp(2 \eps\sqrt{2\mathcal{K}(\p, u)})
  + \eps^2 \exp (\eps \sqrt{2\mathcal{K}(\p, u)}) \big)\mathcal{K}(\p, u).
\end{align}
\end{proposition}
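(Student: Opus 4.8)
The plan is to estimate separately the four contributions to $\cE(\p,u)$ — the kinetic term $\frac12\intx\p u^2\,\d x$, the potential term $\intx H(\p)\,\d x$, the cross term $\frac12\intx\frac{\p_xu}{\p}\,\d x$, and the Fisher-type term $\frac14\intx\frac{\p_x^2}{\p^3}\,\d x$ — each in terms of $\mathcal{K}(\p,u)$, after rewriting everything through $\log\p$ using the identities $\frac{\p_xu}{\p}=u(\log\p)_x$ and $\frac{\p_x^2}{\p^3}=\frac{((\log\p)_x)^2}{\p}$. Two elementary facts will be used throughout: since $u\in H^1_0(0,1)$, the fundamental theorem of calculus with Cauchy--Schwarz gives $\|u\|_{L^\infty}\le\|u_x\|_{L^2}$ (and hence $\intx\p u^2\,\d x\le\|u\|_{L^\infty}^2\intx\p\,\d x=\|u\|_{L^\infty}^2$); and since $\intx\p\,\d x=1$ with $\p$ continuous, there is $x_0$ with $\p(x_0)=1$, whence $\|\log\p\|_{L^\infty}\le\|(\log\p)_x\|_{L^2}$ and, in particular, $\|\p\|_{L^\infty}\ge1$. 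I will also use the trivial bounds $\frac12\|u_x\|_{L^2}^2\le\mathcal{K}(\p,u)$ and $\|(\log\p)_x\|_{L^2}^2\le2\eps^2\mathcal{K}(\p,u)$.

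For the kinetic and cross terms, I would apply the weighted Young inequality $|u(\log\p)_x|\le\frac12\p u^2+\frac12\frac{((\log\p)_x)^2}{\p}$, so that
\[
\tfrac12\intx\p u^2\,\d x+\tfrac12\intx\tfrac{\p_xu}{\p}\,\d x\le\tfrac34\intx\p u^2\,\d x+\tfrac14\intx\tfrac{((\log\p)_x)^2}{\p}\,\d x .
\]
The first term on the right is $\le\frac34\|u\|_{L^\infty}^2\le\frac34\|u_x\|_{L^2}^2\le\frac32\mathcal{K}(\p,u)$. Adding the last term above to the genuine Fisher term $\frac14\intx\frac{\p_x^2}{\p^3}\,\d x=\frac14\intx\frac{((\log\p)_x)^2}{\p}\,\d x$ produces $\frac12\intx\frac{((\log\p)_x)^2}{\p}\,\d x\le\frac12\|\p^{-1}\|_{L^\infty}\|(\log\p)_x\|_{L^2}^2\le\eps^2\|\p^{-1}\|_{L^\infty}\mathcal{K}(\p,u)$. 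Thus these three terms together contribute at most $\big(\frac32+\eps^2\|\p^{-1}\|_{L^\infty}\big)\mathcal{K}(\p,u)$.

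The remaining, and main, difficulty is the pressure potential $\intx H(\p)\,\d x$; this is precisely the step that requires $\gamma=1$. Setting $g(\phi):=\eps^2H(\e^\phi)=\e^\phi\phi-\e^\phi+1=\int_0^\phi s\,\e^s\,\d s$, I would establish the pointwise bound $g(\phi)\le\frac{\phi^2}{2}\max\{1,\e^\phi\}$ by splitting on the sign of $\phi$: for $\phi\ge0$, $\int_0^\phi s\e^s\,\d s\le\e^\phi\int_0^\phi s\,\d s=\frac{\phi^2}{2}\e^\phi$; for $\phi\le0$, $\int_0^\phi s\e^s\,\d s=\int_\phi^0(-s)\e^s\,\d s\le\int_\phi^0(-s)\,\d s=\frac{\phi^2}{2}$. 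Consequently $H(\p(x))\le\frac{(\log\p(x))^2}{2\eps^2}\max\{1,\p(x)\}\le\frac{\|\p\|_{L^\infty}}{2\eps^2}(\log\p(x))^2$, and integrating, using $\intx(\log\p)^2\,\d x\le\|\log\p\|_{L^\infty}^2\le\|(\log\p)_x\|_{L^2}^2$ together with $\|\p\|_{L^\infty}\le\|\p\|_{L^\infty}^2$ (valid since $\|\p\|_{L^\infty}\ge1$), gives $\intx H(\p)\,\d x\le\|\p\|_{L^\infty}\mathcal{K}(\p,u)\le2\|\p\|_{L^\infty}^2\mathcal{K}(\p,u)$. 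Summing this with the previous contribution yields \eqref{eq:bddEK1}. Finally, \eqref{eq:bddEK2} is obtained by inserting into \eqref{eq:bddEK1} the bounds $\|\p\|_{L^\infty}\le\exp(\|\log\p\|_{L^\infty})\le\exp(\|(\log\p)_x\|_{L^2})\le\exp(\eps\sqrt{2\mathcal{K}(\p,u)})$ and, in the same way, $\|\p^{-1}\|_{L^\infty}\le\exp(\eps\sqrt{2\mathcal{K}(\p,u)})$. The only genuinely delicate point is the potential-term estimate above, which exploits the explicit logarithmic form of $H$ for $\gamma=1$; the rest is a bookkeeping of Young and Poincaré-type inequalities.
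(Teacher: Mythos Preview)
Your proof is correct and follows the same overall scheme as the paper: bound each of the four summands in $\cE(\p,u)$ by a multiple of $\mathcal{K}(\p,u)$ via Poincar\'e-type inequalities, then deduce \eqref{eq:bddEK2} from $\|\log\p\|_{L^\infty}\le\|(\log\p)_x\|_{L^2}\le\eps\sqrt{2\mathcal{K}(\p,u)}$. The handling of the kinetic, cross, and Fisher terms is essentially a reshuffling of the paper's estimates and yields identical constants.

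The only real difference is in the potential term. The paper uses the cruder chain
\[
\p\log\p-\p+1\le\p(\p-1),\qquad \intx(\p^2-1)\,\d x=\intx(\p-1)^2\,\d x\le\intx\p_x^2\,\d x\le\|\p\|_{L^\infty}^2\|(\log\p)_x\|_{L^2}^2,
\]
which lands directly on $2\|\p\|_{L^\infty}^2\mathcal{K}$. You instead derive the pointwise bound $\e^\phi\phi-\e^\phi+1\le\tfrac{\phi^2}{2}\max\{1,\e^\phi\}$ via the integral representation $\int_0^\phi s\e^s\,\d s$ and a sign split, which actually gives the sharper $\intx H(\p)\,\d x\le\|\p\|_{L^\infty}\mathcal{K}$ before you deliberately weaken it to match the stated constant. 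Both arguments are elementary and specific to $\gamma=1$; the paper's is slightly shorter, while yours makes the dependence on $\log\p$ more transparent and in fact yields a tighter intermediate bound.
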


\begin{proof}
It suffices to control each term in the definition of $\cE(\p, u)$
in \eqref{eq:def_modified energy} separately.
The first two terms are easily bounded by
$$
 \frac{1}{2}\intx \tp \tu^2 \d x \leq \frac{1}{2}\|u\|_{L^\infty}^2 \intx \tp \d x \leq\frac{1}{2} \|u_x\|_{L^2}^2 \leq \mathcal{K}(\p, u),
$$
and
\begin{align*}
\intx \frac{1}{\eps^2}(\tp \log \tp-p+1)\d x
&\leq \frac{1}{\eps^2} \intx \tp (\tp-1) \d x
  = \frac{1}{\eps^2} \intx (\tp^2-1) \d x \leq \frac{1}{\eps^2} \intx \tp_x^2 \d x \\
& \leq \frac{1}{\eps^2}\|\tp\|_{L^\infty}^2 \|(\log \tp)_x\|_{L^2}^2 \leq 2 \|\tp\|_{L^\infty}^2 \mathcal{K}(\p, u).
\end{align*}
Moreover, we have
\begin{align*}
&\frac{1}{4} \intx \frac{\tp_x^2}{\tp^3} \d x
 \leq  \frac{\eps^2}{2}\|\tp^{-1}\|_{L^\infty} \, \frac{1}{2\eps^2}\|(\log \tp)_x\|_{L^2}^2
 \leq \frac{\eps^2}{2}\|\tp^{-1}\|_{L^\infty}  \mathcal{K}(\p, u),\\[0.5mm]
&\frac{1}{2} \intx \frac{\tp_x \tu}{\tp}
\leq \frac{1}{4} \intx \tp \tu^2 \d x + \frac{1}{4} \intx \frac{\tp_x^2}{\tp^3} \d x
\leq \frac{1}{2} \mathcal{K}(\p, u)+\frac{\eps^2}{2}\|\tp^{-1}\|_{L^\infty}  \mathcal{K}(\p, u).
\end{align*}
Combining the above, we obtain the first estimate \eqref{eq:bddEK1}.

The second estimate \eqref{eq:bddEK2} follows from the first one and the fact that
\begin{align*}
\|\log \p\|_{L^\infty} \leq \|(\log \p)_x\|_{L^2} \leq \eps^{-1}\sqrt{2\mathcal{K}(\p, u)}
\end{align*}
for all $\p \in H^1$ with mass equal to 1.
Indeed, there exists $x_0 \in [0, 1]$ such that $\p (x_0) = 1$ so that, for all $x \in [0, 1]$,
\begin{align*}
|\log \p(x)| =| \log \p(x)-\log\p(x_0) |= \Big|\int_{x_0}^x \big(\log \p(y)\big)_y \d y \Big|
\leq \|(\log \p)_y\|_{L^2}.
\end{align*}
\end{proof}

In light of the above proposition, we define $h: [0, \infty) \to [0, \infty)$ given by
\begin{equation}\label{eq:def_h}
h(x): =\big(\frac{3}{2}+2\exp (2 \eps\sqrt{2x} )+ \eps^2 \exp(\eps \sqrt{2x}) \big)x.
\end{equation}
Note that $h$ is strictly increasing, bijective and convex.
Using \eqref{eq:modified_energy_balance_ch3}, we can obtain a first bound for the invariant measures.
The computation here is similar to the one in the incompressible case; see \cite{Flandoli08}.

\begin{proposition}
Let $F: [0, \infty) \to [0, \infty)$ be of class $C^1$ and increasing.
Let $\mu_\eps$ be an invariant measure of system \eqref{eq:NS} defined on the state space $\X$. Then
\begin{equation}\label{eq:invar_meas_est_F}
 \int_{\X} F(\cE(\p, u)) \mathcal{K}(\p, u) \:\d\mu_\eps
 \leq \int_{\X} F(\cE(\p, u)) \mathcal{R}(\p, u)\d\mu_\eps
   + \frac{1}{2} \int_{\X} F'(\cE(\p, u)) (\mathcal{M}(\p, u))^2 \d\mu_\eps,
\end{equation}
where the last integral is to be understood as
\begin{equation*}
\int_{\X} F'(\cE(\p, u)) (\mathcal{M}(\p, u))^2 \d\mu_\eps
= \sum_{k =1}^\infty \int_{\X} F'(\cE(\p, u)) \eps^{2\beta}
\Big(\intx \big(\p u + \frac{1}{2} \frac{\p_x}{\p}\big)\sigma_k \d x \Big)^2 \d\mu_\eps.
\end{equation*}
\end{proposition}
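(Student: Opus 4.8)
\emph{Plan of proof.}
The idea is to test the energy balance against a nonlinear function of $\cE$ and exploit stationarity. Let $(\pv,\uv)$ be a statistically stationary solution of \eqref{eq:NS}, so that the law of $(\pv(t,\cdot),\uv(t,\cdot))$ is $\mu_\eps$ for every $t\geq 0$, and let $G$ be the primitive of $F$ with $G(0)=0$; then $G$ is nonnegative, increasing and convex, with $G''=F'$. Applying It\^o's formula to $G(\cE(\pv,\uv))$, using the energy balance $\d\cE(\pv,\uv)+\mathcal{K}(\pv,\uv)\d t=\mathcal{M}(\pv,\uv)\d W_t+\mathcal{R}(\pv,\uv)\d t$ from \eqref{eq:modified_energy_balance_ch3} together with the identity $\d\langle\cE\rangle_t=\mathcal{M}(\pv,\uv)^2\,\d t$ for the quadratic variation, one gets, abbreviating $\cE=\cE(\pv,\uv)$ and similarly for $\mathcal{K},\mathcal{M},\mathcal{R}$,
\begin{equation*}
\d G(\cE)=\Big(-F(\cE)\mathcal{K}+F(\cE)\mathcal{R}+\tfrac12 F'(\cE)\mathcal{M}^2\Big)\d t+F(\cE)\mathcal{M}\,\d W_t .
\end{equation*}
Integrating over $[0,t]$, taking expectations so that the stochastic integral drops out, and using stationarity — which makes the expectation of every $s$-integrand constant in $s$, hence $\ex\intt(\cdot)\,\d s=t\int_{\X}(\cdot)\,\d\mu_\eps$, and which also yields $\ex[G(\cE(t,\cdot))]=\ex[G(\cE(0,\cdot))]$ so that the two boundary terms cancel — one is left with the equality $\int_{\X}F(\cE)\mathcal{K}\,\d\mu_\eps=\int_{\X}F(\cE)\mathcal{R}\,\d\mu_\eps+\tfrac12\int_{\X}F'(\cE)\mathcal{M}^2\,\d\mu_\eps$, of which \eqref{eq:invar_meas_est_F} is the (weaker) recorded consequence. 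This is the compressible analogue of the computation in the incompressible case in \cite{Flandoli08}.

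To make this rigorous one cannot apply It\^o and discard the stochastic integral directly, since a priori neither $G(\cE)$ nor $\mathcal{M}(\pv,\uv)^2$ — which by Proposition \ref{prop:property_modified_energy} is controlled only by $\cE\exp(\sqrt{8\cE})$ — is known to be $\mu_\eps$-integrable. The plan is therefore: (i) replace $F$ by bounded, increasing, $C^1$ truncations $F_M\uparrow F$ with $F_M'\uparrow F'$ a.e. (e.g.\ a mollification of $x\mapsto F(x\wedge b_M)$, $b_M\uparrow\infty$), whose primitives $G_M$ have at most linear growth; (ii) introduce the stopping time $\theta_N:=\inf\{t\geq 0:\cE(\pv(t,\cdot),\uv(t,\cdot))\geq N\}$, so that on $[0,t\wedge\theta_N]$ the energy $\cE$, and hence by Proposition \ref{prop:property_modified_energy} also $\mathcal{M}^2$, $F_M(\cE)$ and $F_M'(\cE)$, is bounded, which makes $s\mapsto\int_0^{s\wedge\theta_N}F_M(\cE)\mathcal{M}\,\d W$ a genuine martingale on $[0,t]$; (iii) apply It\^o up to $t\wedge\theta_N$, take expectations, and use $G_M\geq 0$ to drop the term $-\ex[G_M(\cE(t\wedge\theta_N,\cdot))]\leq 0$ — this is precisely the step that turns the equality into the inequality \eqref{eq:invar_meas_est_F}. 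Letting $N\to\infty$ (global existence of solutions forces $\theta_N\uparrow\infty$ almost surely), then dividing by $t$ and letting $t\to\infty$ so that the finite boundary term $\ex[G_M(\cE(0,\cdot))]$ washes out, and finally letting $M\to\infty$ with monotone convergence, recovers \eqref{eq:invar_meas_est_F}.

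The genuine input the argument requires is that $\cE$ be $\mu_\eps$-integrable, $\int_{\X}\cE\,\d\mu_\eps<\infty$, so that $\ex[G_M(\cE(0,\cdot))]\leq (\mathrm{const})\int_{\X}\cE\,\d\mu_\eps$ is finite and vanishes after dividing by $t$; in the case $\gamma=1$ considered here this follows from the construction of $\mu_\eps$ in \cite{michele} combined with the $T$-independent moment bounds noted in the Remark after Proposition \ref{prop:poly_and_exp_moments_bounds_NS} (via Krylov–Bogolyubov and lower semicontinuity of $\cE$), and in any case it is the $F\equiv 1$ instance of the present argument run with the stopping time alone. Apart from this, the main technical obstacle is organizing the three successive limits ($N\to\infty$, then $t\to\infty$, then $M\to\infty$) so that only nonnegative integrands are interchanged with limits, and verifying through the pointwise inequalities of Proposition \ref{prop:property_modified_energy} that $\mathcal{M}(\pv,\uv)^2$ and the truncated integrands are dominated by explicit functions of $\cE$ on the stopped interval; each step is routine, but the order matters.
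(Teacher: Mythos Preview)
Your approach is correct and shares the same core idea as the paper's proof: apply It\^o's formula to the primitive $\tilde F=G$ of $F$ composed with $\cE$, then pass to the limit after suitable truncation. The difference lies in the regularization. You truncate $F$ by $F_M$ and localize in time via the stopping time $\theta_N$, which forces you to assume $\int_{\X}\cE\,\d\mu_\eps<\infty$ as an external input (your appeal to \cite{michele} for $\gamma=1$ is legitimate here, though the bootstrap ``$F\equiv 1$ instance with stopping time alone'' is circular as written, since $\ex[\cE(0)]$ is precisely the quantity in question). The paper instead truncates the \emph{initial measure} to $\mu_\eps^\delta$ supported on $\{\cE\leq N_\delta\}$ and caps the integrand $F(\cE)\mathcal K$ by a constant $P$; solutions from this bounded initial data inherit all moment bounds from Proposition~\ref{prop:poly_and_exp_moments_bounds_NS}, so the stochastic integral is a genuine martingale without stopping, and the boundary term $\tilde F(N_\delta)/T$ is automatically finite. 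After time-averaging and using invariance of $\mu_\eps$ on the right-hand side, the limits $T\to\infty$, $\delta\to0$, $P\to\infty$ are taken in that order. The paper's route is thus self-contained---it needs no a priori integrability of $\cE$ under $\mu_\eps$---whereas yours is marginally cleaner once that integrability is granted. Both yield the same inequality.
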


\begin{proof}
Note that, for each $\delta>0$, there exists $N_\delta>0$ such that
\begin{align*}
q_\eps^\delta := \mu_\eps \left(\{(\p, u) \in \X\, : \,\cE(\p, u) > N_\delta \}\right) < \delta.
\end{align*}
Define a probability measure $\mu_\eps^\delta$ on $\X$ by
\begin{equation*}
\mu_\eps^\delta (A) = \frac{1}{1-q_\eps^\delta}\int_A  \mathbbm{1}_{\{\cE(\p, u) \leq N_\delta\} } \d \mu_\eps
\end{equation*}
for all Borel measurable set $A \subseteq \X$.
Let $(\p_{0, \eps}^\delta, u_{0, \eps}^\delta)$ be an $\mathcal{F}_0$-measurable random variable
distributed as $\mu_\eps^\delta$.
Let $(\pv^\delta, \uv^\delta)$ be a solution of the fluctuation
system \eqref{eq:nonlinear} with initial data $(\p_{0, \eps}^\delta, u_{0, \eps}^\delta)$.

Define the function $\tilde{F}: [0, \infty) \to [0, \infty)$ by
\begin{align*}
\tilde{F}(r) = \int_0^r F(y) \d y
\end{align*}
so that $\tilde{F}' = F$. By It\^{o}' s formula, we have
\begin{align*}
\d \tilde{F}(\cE(\pv^\delta, \uv^\delta)
= F( \cE (\pv^\delta, \uv^\delta)) \d\cE(\pv^\delta, \uv^\delta)
+ \frac{1}{2} F'(\cE(\pv^\delta, \uv^\delta)) \d \langle \cE(\pv^\delta, \uv^\delta) , \cE(\pv^\delta, \uv^\delta) \rangle,
\end{align*}
so that
\begin{align*}
& \d \tilde{F}(\cE(\pv^\delta, \uv^\delta) )  + F(\cE(\pv^\delta, \uv^\delta))K(\pv^\delta, \uv^\delta)  \d t \\
&= F(\cE(\pv^\delta, \uv^\delta))M(\pv^\delta, \uv^\delta)  \d W_t
  + F(\cE(\pv^\delta, \uv^\delta))R(\pv^\delta, \uv^\delta)  \d t \\
&\quad+ \frac{1}{2} F'(\cE(\pv^\delta, \uv^\delta))\big(M(\pv^\delta, \uv^\delta) \big)^2 \d t,
\end{align*}
where
\begin{align*}
F'(\cE(\pv^\delta, \uv^\delta))\big(M(\pv^\delta, \uv^\delta) \big)^2 \d t
= \sum_{k=1}^\infty F'(\cE(\pv^\delta, \uv^\delta))
  \Big(  \eps^{\beta} \intx \big(\pv^\delta \uv^\delta + \frac{1}{2} \frac{(\pv)_x^\delta}{\pv^\delta}\big)
    \sigma_k \d x \Big)^2\d t.
\end{align*}
Thus, for all $T>0$, we obtain
\begin{align*}
& \ex \big[ \tilde{F}(\cE(\pv^\delta(T, \cdot), \uv^\delta(T, \cdot)) )\big]
+ \ex \Big[ \intT F(\cE(\pv^\delta(t, \cdot), \uv^\delta(t, \cdot)))K(\pv^\delta(t, \cdot), \uv^\delta(T, \cdot)) \d t \Big]\\
&= \ex \big[ \tilde{F}(\cE(\p_{0, \eps}^\delta, u_{0, \eps}^\delta)) \big]
  + \ex \Big[ \intT F(\cE(\pv^\delta(t, \cdot), \uv^\delta(t, \cdot)))R(\pv^\delta(t, \cdot), \uv^\delta(t, \cdot))\d t \Big]\\
&\quad + \frac{1}{2}\ex \Big[ \intT F'(\cE(\pv^\delta(t, \cdot), \uv^\delta(t, \cdot)))
  \big(M(\pv^\delta(t, \cdot), \uv^\delta(t, \cdot))\big)^2 \d t\Big].
\end{align*}
Therefore, for all $T, P >0$, we have
\begin{align*}
&\int_{\mathcal{X}} \min\{F\left(\cE(\p, u)\right)\mathcal{K}(\p, u) , P\}\d \mu_\eps\\
&=  \frac{1}{T}\intT \int_{\mathcal{X}}
  \mathcal{P}_t(\min\{F(\cE(\p, u))\mathcal{K}(\p, u), P\})\d \mu_\eps \d t\\
&=  \frac{1}{T}\intT \int_{\mathcal{X}}
\mathcal{P}_t (\min\{F(\cE(\p, u))\mathcal{K}(\p, u), P\})
  \mathbbm{1}_{\{\cE(\p, u) \leq N_\delta\} }\d \mu_\eps \d t\\
& \quad +\frac{1}{T}\intT \int_{\mathcal{X}}
  \mathcal{P}_t (\min\{F(\cE(\p, u))\mathcal{K}(\p, u), P\})
  \mathbbm{1}_{\{ \cE(\p, u) > N_\delta\} }\d \mu_\eps \d t\\
&\leq \frac{(1-q_\eps^\delta)}{T}\intT
  \ex\big[ \min\{F(\cE(\pv^\delta(t, \cdot), \uv^\delta(t, \cdot )))K(\pv^\delta(t, \cdot), \uv^\delta(t, \cdot)) , P\} \big] \d t
    + P \delta\\
&\leq \frac{(1-q_\eps^\delta)}{T} \ex \big[ \tilde{F}(\cE(\p_{0, \eps}^\delta, u_{0, \eps}^\delta)) \big]
 +  \frac{(1-q_\eps^\delta)}{T}\ex\Big[\intT F(\cE(\pv^\delta(t, \cdot), \uv^\delta(t, \cdot)))
    R(\pv^\delta(t, \cdot), \uv^\delta(t, \cdot)) \d t \Big]\\
&\quad + \frac{(1-q_\eps^\delta)}{2T} \ex \Big[ \intT F'(\cE(\pv^\delta(t, \cdot ), \uv^\delta(t, \cdot)))
   \big(M(\pv^\delta(t, \cdot), \uv^\delta(t, \cdot))\big)^2 \d t\Big]+ P \delta\\
&\leq \frac{(1-q_\eps^\delta) \tilde{F}(N_\delta)}{T}
  + \frac{1}{T} \intT \int_{\mathcal{X}}  \mathcal{P}_t (F(\cE(\p, u))\mathcal{R}(\p, u))\chi_{\cE(\p, u) \leq N_\delta}\d \mu_\eps \d t \\
&\quad + \frac{1}{2T}\intT  \int_{\mathcal{X}} \mathcal{P}_t(F'(\cE(\p, u)) (M(\tp, u))^2)
   \chi_{\cE(\p, u) \leq N_\delta}\d \mu_\eps  \d t+ P \delta\\
&\leq \frac{(1-q_\eps^\delta) \tilde{F}(N_\delta)}{T}
  + \int_{\mathcal{X}}   F(\cE(\p, u))\mathcal{R}(\p, u) \d \mu_\eps
   + \frac{1}{2}\int_{\mathcal{X}} F'(\cE(\p, u))(M(\tp, u))^2\d \mu_\eps+P\delta.
\end{align*}
The result follows by letting $T \to \infty$, then $\delta \to 0$, and finally $P \to \infty$.
\end{proof}

We also need to control the integrals of all the powers of $\cE$ in the following sense,
for which the dependence on $n$ below is crucial for what comes next.

\begin{lemma} \label{lemma:est_invar_meas_enk}
Let $R_\eps= \frac{\eps^{2\beta}}{2}\|\sigma\|_{L^\infty}^2$ and $B_\eps= 4 \eps^{2 \beta}  \|\sigma\|_{L^\infty}^2$. Then, for all integers $n\geq 0$,
\begin{align*}
\int_{\mathcal{X}}\big(\cE(\p, u)\big)^n \mathcal{K}(\p, u) \d \mu_\eps
\leq 2R_\eps B_\eps^n n! \exp (\frac{h(2R_\eps)}{B_\eps }),
\end{align*}
where $h(x)$ is defined in \eqref{eq:def_h}.
\end{lemma}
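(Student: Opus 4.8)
The plan is to turn \eqref{eq:invar_meas_est_F} into a recursion in $n$ by taking the increasing $C^1$ weight $F(x)=x^n$ there (rigorously one applies it first to the bounded truncations $x\mapsto\min\{x,N\}^n$ and lets $N\to\infty$ by monotone convergence, which is also what legitimises all the integrals below). Write $I_n:=\int_{\mathcal X}\cE(\p,u)^n\,\mathcal K(\p,u)\,\d\mu_\eps$ and $J_n:=\int_{\mathcal X}\cE(\p,u)^n\,\d\mu_\eps$. Two pointwise bounds feed in. First, $\mathcal R(\p,u)=\tfrac{\eps^{2\beta}}{2}\intx\p\sigma^2\,\d x\le\tfrac{\eps^{2\beta}}{2}\|\sigma\|_{L^\infty}^2=R_\eps$ by the mass normalisation $\intx\p\,\d x=1$. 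Second, $\mathcal M(\p,u)^2\le\tfrac12 B_\eps\,\cE(\p,u)$: here I would use $\p u+\tfrac12\tfrac{\p_x}{\p}=\p\bigl(u+\tfrac12\tfrac{\p_x}{\p^2}\bigr)$ together with the elementary identity $2\cE(\p,u)-\intx\p\bigl(u+\tfrac12\tfrac{\p_x}{\p^2}\bigr)^2\d x=\intx\bigl(2H(\p)+\tfrac14\tfrac{\p_x^2}{\p^3}\bigr)\d x\ge0$, and then Cauchy--Schwarz with weight $\p$ gives $\sum_k\bigl(\intx(\p u+\tfrac12\tfrac{\p_x}{\p})\sigma_k\,\d x\bigr)^2\le\bigl(\intx\p(u+\tfrac12\tfrac{\p_x}{\p^2})^2\d x\bigr)\intx\p\sigma^2\d x\le 2\|\sigma\|_{L^\infty}^2\cE(\p,u)$, i.e. $\mathcal M^2\le2\eps^{2\beta}\|\sigma\|_{L^\infty}^2\cE=\tfrac12 B_\eps\cE$. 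Plugging $F(x)=x^n$ into \eqref{eq:invar_meas_est_F} and using $\tfrac{B_\eps}{4}=2R_\eps$, one obtains for every $n\ge0$ that
\[
I_n\ \le\ R_\eps J_n+\tfrac{nB_\eps}{4}J_n\ =\ (2n+1)\,R_\eps\,J_n,\qquad\text{and in particular}\quad I_0=\int_{\mathcal X}\mathcal K\,\d\mu_\eps\le R_\eps .
\]

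\textbf{The crux: bounding the unweighted moments $J_n$.} It remains to control $J_n=\int_{\mathcal X}\cE(\p,u)^n\,\d\mu_\eps$, and the tool for this is the preceding proposition $\cE(\p,u)\le h(\mathcal K(\p,u))$ combined with the first--moment bound $\int_{\mathcal X}\mathcal K\,\d\mu_\eps\le R_\eps$. I would decompose $\mathcal X$ along the level set $\{\mathcal K=2R_\eps\}$: on $\{\mathcal K\le2R_\eps\}$ one gains $\cE\le h(2R_\eps)$, while on $\{\mathcal K>2R_\eps\}$ one uses $1<\tfrac{\mathcal K}{2R_\eps}$ to trade a factor of $\mathcal K$ against the moment $I_n$ already controlled in terms of $J_n$. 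Iterating this decomposition in the index $n$ and keeping careful track of the contribution of each layer, an induction on $n$ should yield
\[
I_n\ \le\ 2R_\eps\,B_\eps^{\,n}\,n!\sum_{j=0}^{n}\frac1{j!}\Bigl(\frac{h(2R_\eps)}{B_\eps}\Bigr)^{\!j}\ \le\ 2R_\eps\,B_\eps^{\,n}\,n!\,\exp\!\Bigl(\frac{h(2R_\eps)}{B_\eps}\Bigr),
\]
which is the assertion (the base case being $I_0\le R_\eps\le2R_\eps$). The factorial originates from the factor $(2n+1)$ in the displayed inequality accumulating down the recursion, while each descent below the threshold $2R_\eps$ contributes one factor $h(2R_\eps)/B_\eps$, whose summation produces the exponential constant; that $2R_\eps$ is precisely the relevant threshold explains why $h(2R_\eps)$ and $B_\eps$ appear in exactly that combination.

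\textbf{Expected main obstacle.} The delicate point, and the reason the bookkeeping must be done carefully rather than by the naive one--step recursion, is that $\cE$ is dominated by the dissipation $\mathcal K$ only through the \emph{super-linear} function $h$, with no reverse comparison $\mathcal K\lesssim\cE$ (large density oscillations can make $\cE$ arbitrarily large relative to $\mathcal K$). Indeed the crude estimate $J_n\le h(2R_\eps)J_{n-1}+\tfrac1{2R_\eps}I_n$ does not close once $I_n\le(2n+1)R_\eps J_n$ is inserted, since the coefficient of $I_n$ becomes $\tfrac{2n+1}{2}>1$; and simply raising the splitting level so that the recursion closes costs a geometric factor $(7/2)^n$, coming from $h(2R_\eps)/(2R_\eps)\approx\tfrac72$ together with the optimisation over the level, which is too lossy to give the stated constant. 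Extracting the sharp $\exp(h(2R_\eps)/B_\eps)$ therefore requires resolving the large--$\mathcal K$ remainder back into strictly lower--order moments (so the $(2n+1)$--factors combine into a clean $B_\eps^{\,n}n!$) rather than reabsorbing it at the same order. One must also track a priori finiteness of the $J_n$ — available from the moment information on $\mu_\eps$ underlying its construction in \cite{michele} — so that the monotone limits used above are legitimate.
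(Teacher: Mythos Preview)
Your setup is sound and you correctly identify the obstacle, but you do not resolve it: the recursion you derive, $I_n\le(2n+1)R_\eps J_n$ together with $J_n\le h(2R_\eps)^n+\tfrac1{2R_\eps}I_n$, genuinely fails to close for $n\ge1$, and ``resolving the large-$\mathcal K$ remainder back into strictly lower-order moments'' remains a hope rather than an argument. The missing idea is that you chose the wrong pointwise bound for the martingale term. Instead of $\mathcal M^2\le\tfrac12 B_\eps\,\cE$, the paper uses
\[
\mathcal M(\p,u)^2\ \le\ \eps^{2\beta}\|\sigma\|_{L^\infty}^2\Big(2\intx\p u^2\,\d x+\tfrac12\|(\log\p)_x\|_{L^2}^2\Big)\ \le\ 4\eps^{2\beta}\|\sigma\|_{L^\infty}^2\,\mathcal K(\p,u)\ =\ B_\eps\,\mathcal K(\p,u),
\]
the first step by $(a+b)^2\le2a^2+2b^2$ and Cauchy--Schwarz, the second by Poincar\'e on $u$ and $\eps\le1$. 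This is the crucial point: bounding $\mathcal M^2$ by the \emph{dissipation} $\mathcal K$ rather than the \emph{energy} $\cE$ makes the It\^o correction term in \eqref{eq:invar_meas_est_F} with $F(z)=z^n$ come out as $\tfrac{nB_\eps}{2}\int\cE^{n-1}\mathcal K\,\d\mu_\eps=\tfrac{nB_\eps}{2}I_{n-1}$, already of strictly lower order in $n$.

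With this one change the argument is immediate. One gets $I_n\le R_\eps J_n+\tfrac{nB_\eps}{2}I_{n-1}$; only the $R_\eps J_n$ term carries the top order, and your splitting on $\{\mathcal K\le2R_\eps\}$ now gives $R_\eps J_n\le R_\eps h(2R_\eps)^n+\tfrac12 I_n$ with coefficient $\tfrac12<1$. Absorbing yields the clean two-term recursion $I_n\le 2R_\eps h(2R_\eps)^n+nB_\eps I_{n-1}$, which iterates (with $I_0\le R_\eps$) to $I_n\le 2R_\eps B_\eps^{\,n}n!\sum_{k=0}^n\tfrac1{k!}\big(h(2R_\eps)/B_\eps\big)^k$. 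Your estimate $\mathcal M^2\le\tfrac12 B_\eps\cE$ is correct and even sharper as a pointwise inequality, but it puts the It\^o correction at the same order as $I_n$ via $J_n$, which is exactly what prevents closure.
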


\begin{proof}
We first note that, for all $(\p, u) \in \X$,
\begin{align*}
    \mathcal{R}(\p, u)
    &= \sum_{k=1}^\infty \frac{\eps^{2\beta}}{2} \intx \p \sigma_k^2 \d x
    \leq  \sum_{k=1}^\infty \frac{\eps^{2\beta}}{2} \|\sigma_k\|_{L^\infty}^2  \intx \p \d x \leq  \frac{\eps^{2\beta}}{2} \|\sigma\|_{L^\infty}^2 = R_\eps,
\end{align*}
and
\begin{align*}
\mathcal{M}(\p, u)^2
&\leq  \eps^{2 \beta} \sum_{k=1}^\infty \|\sigma_k\|_{L^\infty}^2
   \Big(2 \intx \p \d x \intx \p u^2 \d x + \frac{1}{2}\intx (\log \p)_x^2\Big) \\
     &\leq 4 \eps^{2 \beta}  \|\sigma\|_{L^\infty}^2
       \Big(\frac{1}{2}\|u_x\|^2 + \frac{1}{2 \eps^2}\|(\log \p)_x\|_{L^2}^2\Big)
       = B_\eps \mathcal{K}(\p, u).
\end{align*}
Now, taking $F(z)\equiv 1$ in \eqref{eq:invar_meas_est_F} yields
\begin{equation}\label{eq:invar_meas_est_K}
\int_{\X} \mathcal{K}(\p, u) \d \mu_\eps \leq \int_{\X} \mathcal{R}(\p, u) \d \mu_\eps \leq R_\eps.
\end{equation}
Next, for $n \in  \mathbb{N}$, taking $F(z)=z^n$ in \eqref{eq:invar_meas_est_F}, we have
\begin{align*}
& \int_{\X} \big(\cE(\p, u)\big)^n \mathcal{K}(\p, u) \d \mu_\eps \\
&\leq \int_{\X} \big(\cE(\p, u)\big)^n \mathcal{R}(\p, u) \d \mu_\eps
 + \frac{n}{2} \int_{\X} \big(\cE(\p, u)\big)^{n-1} \big(\mathcal{M}(\p, u)\big)^2 \d \mu_\eps\\
&\leq R_\eps \int_{\X} \big(\cE(\p, u)\big)^n  \d \mu_\eps
  + \frac{B_\eps n}{2} \int_{\X} \big(\cE(\p, u)\big)^{n-1} \mathcal{K}(\p, u) \d \mu_\eps \\
&\leq R_\eps \int_{\X \cap \{\mathcal{K}(\p, u) \leq 2R_\eps\}}
  \big(\cE(\p, u)\big)^n  \d \mu_\eps
  + \int_{\X \cap \{\mathcal{K}(\p, u) > 2R_\eps\}} \big(\cE(\p, u)\big)^n  \d \mu_\eps \\
&\quad + \frac{B_\eps n}{2} \int_{\X} \big(\cE(\p, u)\big)^{n-1} \mathcal{K}(\p, u) \d \mu_\eps \\
&\leq R_\eps \big(h(2R_\eps)\big)^n + \frac{1}{2}\int_{\X \cap \{\mathcal{K}(\p, u) > 2R_\eps\}} \big(\cE(\p, u)\big)^n \mathcal{K}(\p, u)  \d \mu_\eps
+ \frac{B_\eps n}{2} \int_{\X} \big(\cE(\p, u)\big)^{n-1} \mathcal{K}(\p, u) \d \mu_\eps.
\end{align*}
The last inequality is due to the control: $\cE(\p, u) \leq h(\mathcal{K}(\p, u))$.
This implies
\begin{align*}
\int_{\bar{\mathcal{X}}} \big(\cE(\p, u)\big)^n \mathcal{K}(\p, u) \d \mu_\eps
\leq 2R_\eps \big(h(2R_\eps)\big)^n
+ B_\eps n \int_{\bar{\mathcal{X}}} \big(\cE(\p, u)\big)^{n-1} \mathcal{K}(\p, u) \d \mu_\eps.
\end{align*}
Iterating the last inequality and using \eqref{eq:invar_meas_est_K}, we obtain
\begin{align*}
\int_{\bar{\mathcal{X}}} \big(\cE(\p, u)\big)^n \mathcal{K}(\p, u) \d \mu_\eps
\leq 2R_\eps B_\eps^n n! \sum_{k=1}^n \frac{1}{k!} \big(\frac{h(2R_\eps)}{B_\eps }\big)^k
+ R_\eps B_\eps^n n!
\leq 2R_\eps B_\eps^n n! \exp(\frac{h(2R_\eps)}{B_\eps }).
\end{align*}
\end{proof}

The previous lemma allows us to prove new moment bounds of the invariant measures of system \eqref{eq:NS}.
These are fundamental to prove the subsequent convergence results for the invariant measures.

\begin{proposition} \label{prop_invariant_measures_moment}
There exist $C, \lambda>0$, depending only on $\|\sigma\|_{L^\infty}$, such that
\begin{enumerate}
\item[\rm (i)]  For all $p \in [1, \infty)$ and $\eps \in (0, 1]$,  the following polynomial moment bound holds{\rm :}
\begin{equation}\label{eq:NSinvmsrpoly}
\int_{\X} \big(\cE(\p, u)\big)^p \d \mu_\eps \leq C\, p!\, \lambda^p \eps^{2\beta p};
\end{equation}
\item[\rm (ii)] For all $\eps \in (0, 1]$ and $0 < \eta <(2\lambda \eps^{2\beta})^{-1}$, the
following exponential moment
bound holds{\rm :}
\begin{equation}\label{eq:NSinvmsrexp}
\int_{\X} \exp\big(\eta (\cE(\p, u))\big) \d \mu_\eps \leq C.
\end{equation}
\end{enumerate}
\end{proposition}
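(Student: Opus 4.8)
The plan is to prove the two bounds in the order (i)\,[integer $p$] $\Rightarrow$ (ii) $\Rightarrow$ (i)\,[general $p$], using Lemma~\ref{lemma:est_invar_meas_enk} as the only non-elementary input. \textbf{Step 1 (integer polynomial moments).} Fix an integer $n\geq 1$ and split $\X$ according to the size of $\mathcal{K}$, using the threshold $t_\eps:=2R_\eps=\eps^{2\beta}\|\sigma\|_{L^\infty}^2$ (the same threshold that appears inside the proof of Lemma~\ref{lemma:est_invar_meas_enk}). On $\{\mathcal{K}(\p,u)\leq t_\eps\}$ one uses the pointwise estimate $\cE(\p,u)\leq h(\mathcal{K}(\p,u))\leq h(2R_\eps)$ coming from \eqref{eq:bddEK2}--\eqref{eq:def_h} and the monotonicity of $h$; on $\{\mathcal{K}(\p,u)>t_\eps\}$ one writes $\cE^n\leq \cE^n\mathcal{K}/t_\eps$ and applies Lemma~\ref{lemma:est_invar_meas_enk}. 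This gives
\begin{equation*}
\int_{\X}\cE(\p,u)^n\,\d\mu_\eps\;\leq\; h(2R_\eps)^n+\frac{1}{2R_\eps}\Big(2R_\eps B_\eps^n\, n!\,\e^{h(2R_\eps)/B_\eps}\Big)\;=\;h(2R_\eps)^n+B_\eps^n\, n!\,\e^{h(2R_\eps)/B_\eps}.
\end{equation*}
The key point — and the only place where $\beta>0$ and $\eps\leq 1$ enter — is that $\eps\sqrt{2\cdot 2R_\eps}=\sqrt{2}\,\eps^{1+\beta}\|\sigma\|_{L^\infty}\leq \sqrt{2}\,\|\sigma\|_{L^\infty}$, so that $h(2R_\eps)\leq c_2 R_\eps$ with $c_2=c_2(\|\sigma\|_{L^\infty})$; hence $h(2R_\eps)^n\leq n!\,(c_2\|\sigma\|_{L^\infty}^2/2)^n\eps^{2\beta n}$, $B_\eps^n=(4\|\sigma\|_{L^\infty}^2)^n\eps^{2\beta n}$, and $\e^{h(2R_\eps)/B_\eps}\leq \e^{c_2/8}$. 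Setting $\lambda:=\max\{1,\,c_2\|\sigma\|_{L^\infty}^2/2,\,4\|\sigma\|_{L^\infty}^2\}$ and $C$ the corresponding constant yields \eqref{eq:NSinvmsrpoly} for all integers $n\geq 0$.

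\textbf{Step 2 (exponential moment).} For $0<\eta<(\lambda\eps^{2\beta})^{-1}$, Tonelli's theorem and Step~1 give
\begin{equation*}
\int_{\X}\exp\big(\eta\,\cE(\p,u)\big)\d\mu_\eps=\sum_{n=0}^\infty\frac{\eta^n}{n!}\int_{\X}\cE(\p,u)^n\,\d\mu_\eps\;\leq\; C\sum_{n=0}^\infty(\eta\lambda\eps^{2\beta})^n=\frac{C}{1-\eta\lambda\eps^{2\beta}},
\end{equation*}
which is at most $2C$ once $\eta<(2\lambda\eps^{2\beta})^{-1}$; this is \eqref{eq:NSinvmsrexp}. \textbf{Step 3 (general polynomial moments).} For arbitrary $p\geq 1$, take $\eta=(2\lambda\eps^{2\beta})^{-1}$ and use $x^p\leq (p/(\e\eta))^p\e^{\eta x}$ together with \eqref{eq:NSinvmsrexp} to obtain $\int_{\X}\cE^p\,\d\mu_\eps\leq 2C\,(2\lambda)^p(p/\e)^p\eps^{2\beta p}$; since $(p/\e)^p\leq\Gamma(p+1)=p!$ for $p\geq 1$, relabelling the constants gives \eqref{eq:NSinvmsrpoly} for all real $p\geq 1$.

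The substantive part is Step~1: one must choose the splitting level proportional to $R_\eps$ (rather than a fixed constant), so that on the "small $\mathcal{K}$" region $\cE$ is genuinely of order $\eps^{2\beta}$ and not merely $O(1)$, and one must verify that the exponential prefactors $h(2R_\eps)/B_\eps$ and $h(2R_\eps)/R_\eps$ stay bounded uniformly in $\eps\in(0,1]$ — which is precisely where the scaling $\sigma_\eps=\eps^\beta\sigma$ with $\beta>0$ is exploited. Steps~2 and 3 are then entirely routine series and calculus estimates.
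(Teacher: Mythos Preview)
Your proof is correct and follows essentially the same approach as the paper: split $\X$ according to the size of $\mathcal{K}$, control the ``small $\mathcal{K}$'' region via $\cE\leq h(\mathcal{K})$, control the ``large $\mathcal{K}$'' region by Lemma~\ref{lemma:est_invar_meas_enk}, and then sum the geometric series for the exponential moment. The only cosmetic difference in Step~1 is the choice of threshold: the paper splits at $B_\eps$ rather than $2R_\eps$, but since $B_\eps=8R_\eps$ this is immaterial. Your Step~3, deducing the bound for non-integer $p$ from the exponential moment via $x^p\leq (p/(\e\eta))^p\e^{\eta x}$ and $(p/\e)^p\leq\Gamma(p+1)$, is more explicit than the paper, which simply asserts that integer $p$ suffices without spelling out the passage to general $p$; your route is a clean way to fill that in.
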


\begin{proof}
For the polynomial bound \eqref{eq:NSinvmsrpoly}, we preliminarily note that it suffices to show the case when $p$ is a positive integer.
In light of Lemma \ref{lemma:est_invar_meas_enk}, we have
\begin{align*}
\int_{\X} \big(\cE(\p, u)\big)^p \d \mu_\eps
&\leq \int_{\X\cap\{\mathcal{K}(\p, u) \leq B_\eps\}} \big(\cE(\p, u)\big)^p \d \mu_\eps
+\int_{\X \cap\{\mathcal{K}(\p, u) > B_\eps\}} \big(\cE(\p, u)\big)^p \d \mu_\eps\\
&\leq \big(h(B_\eps)\big)^p
+\frac{1}{B_\eps}\int_{\X \cap\{\mathcal{K}(\p, u) > B_\eps\}}\big(\cE(\p, u)\big)^p \mathcal{K}(\p, u) \d\mu_\eps\\
&\leq \big(h(B_\eps)\big)^p + 2 R_\eps B_\eps^{p-1} p! \exp (\frac{h(2R_\eps)}{B_\eps }).
\end{align*}
Since $R_\eps= \frac{\eps^{2\beta}}{2} \|\sigma\|_{L^\infty}^2$
and $B_\eps= 4 \eps^{2 \beta}  \|\sigma\|_{L^\infty}^2$,
we see that $0< B_\eps \leq 4  \|\sigma\|_{L^\infty}^2$ and $0 < R_\eps \leq \frac{1}{2}\|\sigma\|_{L^\infty}^2$,
so that
$0 \leq h(B_\eps)\leq C B_\eps$ and $0 \leq h(2R_\eps)\leq C R_\eps$ for some $C >0$ independent of $\eps \in (0, 1]$.
In addition,  $\frac{h(2R_\eps)}{B_\eps}$ is bounded uniformly in $\eps \in (0, 1]$
so that \eqref{eq:NSinvmsrpoly} follows.

For the exponential moment bound  \eqref{eq:NSinvmsrexp}, we use the polynomial moment bounds and obtain
\begin{align*}
\int_{\bar{\mathcal{X}}} \exp\big(\eta \cE(\p, u)\big) \d \mu_\eps
\leq \sum_{n=0}^\infty  \frac{\eta^n }{n!} \int_\mathcal{X}\big(\cE(\p, u)\big)^n \d \mu_\eps
\leq \sum_{n=0}^\infty  C\eta^n \lambda^n \eps^{2 \beta n}= \frac{C}{1-\eta \lambda \eps^{2\beta}} \leq 2C,
\end{align*}
provided that $0< \eta < (2\lambda \eps^{2\beta})^{-1}$. This completes the proof.
\end{proof}

\subsection{Proof of  Theorem \ref{thm:convergence_invariant_measures}}
To prove Theorem \ref{thm:convergence_invariant_measures},
we follow the strategy of \cite{convergenceMHD}.
Let $T > 0$ to be determined later.
For each $\eps \in (0, 1]$, let $(\bpv, \buv)$ be a statistically stationary solution
of system \eqref{eq:nonlinear} such that $(\bpv(t, \cdot) , \buv(t, \cdot))$ is distributed
as $\bmv$ for $t \geq 0$,
and let $(\rv, \vv)$ be the solution of system \eqref{linear} with initial conditions:
\begin{equation*}
\rv(0, x)= \bpv(0, x), \quad \vv(0, x) = \buv(0, x)  \qquad \text{ for all } x\in (0,1).
\end{equation*}
By Proposition \ref{prop_invariant_measures_moment}, $(\bpv(0, \cdot), \buv(0, \cdot))$ satisfies \eqref{eq:assumption_moment_initial_poly}
with $\alpha = 2 \beta >1$ so that
Theorem \ref{thm:finite_time_convergence} applies.
As a result, we have the stability estimate:
\begin{align*}
 &\mathcal{W}_\eps (\bar{\mu}_\eps, Q_{T}^*\bar{\mu}_\eps) \\
 &\leq \ex \Big[ \big( \|(\rv(T)-\bpv(T),\,\vv(T)-\buv(T))\|_{L^2}^2
 + \eps^2 \|((\rv)_x(T)-(\bpv)_x(T),\,(\vv)_x(T)-(\buv)_x(T))\|_{L^2}^2 \big)^{\frac{1}{2}}\Big]\\
&\leq  C \eps^{2\beta-1} + C \eps \ex \big[ \|((\rv)_x(T, \cdot),\,(\vv)_x(T, \cdot))\|_{L^2}\big] +
C \eps \ex \big[ \|((\bpv)_x(T, \cdot),\,(\buv)_x(T, \cdot))\|_{L^2}\big].
\end{align*}
Now, since the function: $(r,v) \mapsto \eps\|(r_x,\,v_x)\|_{L^2})$ is Lipschitz with
unit Lipschitz constant with respect to norm $\|\cdot\|_\eps$,
the Monge–Kantorovich duality \eqref{def_monge} implies
\begin{align*}
 &\eps \ex \big[ \|((\rv)_x(T),(\vv)_x(T))\|_{L^2} \Big] \\
&= \Big(\int_\mathcal{X} \eps\|(r_x, v_x)\|_{L^2}
\d (\mathcal{Q}_t^{\eps})^*\bar{\mu}_\eps - \int_\mathcal{X} \eps\|(r_x, \,v_x)\|_{L^2} \d \omega_\eps\Big)
   +  \int_\mathcal{X} \eps\|(r_x,\,v_x)\|_{L^2} \d \omega_\eps \\
&\leq \mathcal{W}_\eps ((\mathcal{Q}_t^{\eps})^*\bar{\mu}_\eps, \omega_\eps)
 + C \eps \Big(\int_\mathcal{X} \|(r_x,\,v_x)\|_{L^2}^2\d \omega_\eps \Big)^{\frac{1}{2}}\\
&\leq C \e^{-\frac{t}{4}} \mathcal{W}_\eps (\bar{\mu}_\eps, \omega_\eps) + C \eps^\beta.
\end{align*}

The last inequality is due to the invariance of $\omega_\eps$
and Propositions \ref{prop:exp_contraction} and  \ref{prop:est_invar_measure_linear}.
Moreover, we have
\begin{align*}
 \eps \ex \big[ \|(\bpv)_x(T, \cdot)\|_{L^2}\big]
&=\eps \int_{\bar{\mathcal{X}}_\eps} \|\bp_x\|_{L^2} \d \bmv \\
&\leq \frac{1}{\eps} \int_{\bar{\mathcal{X}}_\eps}
 \big\|1+\eps^2\bp\big\|_{L^\infty} \big\|(\log (1+\eps^2\bp))_x\big\|_{L^2} \d \bmv\\
&\leq \frac{1}{\eps} \Big(\int_{\bar{\mathcal{X}}_\eps}
   \big\|1+\eps^2\bp\big\|_{L^\infty}^2 \d \bmv \Big)^{\frac{1}{2}}
   \Big( \int_{\bar{\mathcal{X}}_\eps}\big\|(\log (1+\eps^2\bp))_x\big\|_{L^2}^2 \d \bmv\Big)^{\frac{1}{2}}\\
&\leq \frac{1}{\eps} \Big(\int_{\bar{\mathcal{X}}_\eps}\exp\big((4\sqrt{2\cE(1+\eps^2 \bp,\eps \bu)}\big)
  \d \bmv \Big)^{\frac{1}{2}}
  \Big( \int_{\bar{\mathcal{X}}_\eps}\big\|(\log (1+\eps^2\bp))_x\big\|_{L^2}^2 \d \bmv \Big)^{\frac{1}{2}}\\
&\leq \frac{C}{\eps}
\Big(\int_{\bar{\mathcal{X}}_\eps}\exp\big(16\cE(1+\eps^2 \bp, \eps\bu)\big)\d \bmv \Big)^{\frac{1}{2}}
\Big(\int_{\bar{\mathcal{X}}_\eps}\big\|(\log (1+\eps^2\bp))_x\big\|_{L^2}^2 \d \bmv \Big)^{\frac{1}{2}}\\
&= \frac{C}{\eps}
\Big(\int_{\X}\exp\big(16\cE(\p, u)\big)\d \mu_\eps  \Big)^{\frac{1}{2}}
\Big( \int_{\X}\big\|(\log (\p))_x\big\|_{L^2}^2 \d \mu_\eps  \Big)^{\frac{1}{2}}\\
&\leq C \eps^{\beta}
\end{align*}
for all sufficiently small $\eps >0$, by virtue of Lemma \ref{lemma:est_invar_meas_enk} and Proposition \ref{prop_invariant_measures_moment}.
Likewise, Proposition \ref{prop_invariant_measures_moment} also gives
\begin{align*}
 \eps \ex \big[ \big\|(\buv)_x(T, \cdot)\big\|_{L^2}\big]
 =\eps \int_{\bar{\mathcal{X}}_\eps} \big\|(\buv)_x\big\|_{L^2} \d \bmv
 =\frac{1}{\eps}\int_{\mathcal{X}} \big\|(\uv)_x\big\|_{L^2} \d \mu_\eps \leq C \eps^{\beta}.
\end{align*}
Finally, combining all these and making use of the triangle inequality and the invariance of $\omega_\eps$,
we have
\begin{align*}
\mathcal{W}_\eps (\bar{\mu}_\eps, \omega_\eps)
&= \mathcal{W}_\eps (\bar{\mu}_\eps, Q_{T}^*\omega_\eps)\\
&\leq \mathcal{W}_\eps (\bar{\mu}_\eps, Q_{T}^*\bar{\mu}_\eps)
+ \mathcal{W}_\eps (Q_{T}^*\bar{\mu}_\eps, Q_{T}^*\omega_\eps)\\
&\leq C \eps^{2\beta-1}+C \e^{-\frac{t}{4}} \mathcal{W}_\eps (\bar{\mu}_\eps, \omega_\eps)
 + C \eps^\beta + C\e^{-\frac{t}{4}}\mathcal{W}_\eps (\bar{\mu}_\eps, \omega_\eps),
\end{align*}
where we have used again Proposition \ref{prop:exp_contraction} in the last inequality.
By choosing $T$ large enough such that $C\e^{-\frac{t}{4}} < \frac{1}{4}$ gives
\begin{align*}
\mathcal{W}_{L^2} (\bar{\mu}_\eps, \omega_\eps) \leq \mathcal{W}_\eps (\bar{\mu}_\eps, \omega_\eps) \leq  C \eps^{2\beta-1}.
\end{align*}
This is the statement of Theorem \ref{thm:convergence_invariant_measures}, and hence the proof is completed.

\bigskip
\medskip
\noindent
{\bf Acknowledgments.}
The research of Gui-Qiang G. Chen was supported in part by the UK Engineering and Physical
Sciences Research Council Awards EP/L015811/1, EP/V008854, and EP/V051121/1.
The research of Michele Coti Zelati was supported in part by the Royal Society URF\textbackslash R1\textbackslash 191492
and EPSRC Horizon Europe Guarantee EP/X020886/1.
The doctoral research of Chin Ching Yeung was supported in part
by scholarships awarded by the Croucher Foundation Limited,
the Centaline Charity Fund, and the China Oxford Scholarship Fund.

\bigskip
\bibliographystyle{abbrv}
\bibliography{zero-mach_arXiv}

@article {michele,
    AUTHOR = {Coti Zelati, Michele and Glatt-Holtz, Nathan and Trivisa,
              Konstantina},
     TITLE = {Invariant measures for the stochastic one-dimensional
              compressible {N}avier-{S}tokes equations},
   JOURNAL = {Appl. Math. Optim.},
  FJOURNAL = {Applied Mathematics and Optimization},
    VOLUME = {83},
      YEAR = {2021},
    NUMBER = {3},
     PAGES = {1487--1522},
      ISSN = {0095-4616,1432-0606},
   MRCLASS = {35R60 (35Q35 37L40 60H30 76N06)},
  MRNUMBER = {4261265},
       DOI = {10.1007/s00245-019-09594-x},
       URL = {https://doi.org/10.1007/s00245-019-09594-x},
}

@article {A5,
    AUTHOR = {Chen, Gui-Qiang and Christoforou, Cleopatra and Zhang,
              Yongqian},
     TITLE = {Continuous dependence of entropy solutions to the {E}uler
              equations on the adiabatic exponent and {M}ach number},
   JOURNAL = {Arch. Ration. Mech. Anal.},
  FJOURNAL = {Archive for Rational Mechanics and Analysis},
    VOLUME = {189},
      YEAR = {2008},
    NUMBER = {1},
     PAGES = {97--130},
      ISSN = {0003-9527},
   MRCLASS = {35Q35 (35B35 76N10)},
  MRNUMBER = {2403601},
MRREVIEWER = {Drago\c{s} Iftimie},
       DOI = {10.1007/s00205-007-0098-9},
       URL = {https://doi.org/10.1007/s00205-007-0098-9},
}

@article {A4,
    AUTHOR = {Schochet, Steven},
     TITLE = {The mathematical theory of low {M}ach number flows},
   JOURNAL = {M2AN Math. Model. Numer. Anal.},
  FJOURNAL = {M2AN. Mathematical Modelling and Numerical Analysis},
    VOLUME = {39},
      YEAR = {2005},
    NUMBER = {3},
     PAGES = {441--458},
      ISSN = {0764-583X},
   MRCLASS = {35Q35 (76G25 76N10)},
  MRNUMBER = {2157144},
       DOI = {10.1051/m2an:2005017},
       URL = {https://doi.org/10.1051/m2an:2005017},
}

@article {A3,
    AUTHOR = {M\'{e}tivier, G. and Schochet, S.},
     TITLE = {The incompressible limit of the non-isentropic {E}uler
              equations},
   JOURNAL = {Arch. Ration. Mech. Anal.},
  FJOURNAL = {Archive for Rational Mechanics and Analysis},
    VOLUME = {158},
      YEAR = {2001},
    NUMBER = {1},
     PAGES = {61--90},
      ISSN = {0003-9527},
   MRCLASS = {76N10 (35Q35 76B99)},
  MRNUMBER = {1834114},
MRREVIEWER = {Mariarosaria Padula},
       DOI = {10.1007/PL00004241},
       URL = {https://doi.org/10.1007/PL00004241},
}

@article {A1,
    AUTHOR = {Klainerman, Sergiu and Majda, Andrew},
     TITLE = {Compressible and incompressible fluids},
   JOURNAL = {Comm. Pure Appl. Math.},
  FJOURNAL = {Communications on Pure and Applied Mathematics},
    VOLUME = {35},
      YEAR = {1982},
    NUMBER = {5},
     PAGES = {629--651},
      ISSN = {0010-3640},
   MRCLASS = {35Q20 (35L60 76N10)},
  MRNUMBER = {668409},
MRREVIEWER = {Charles J. Amick},
       DOI = {10.1002/cpa.3160350503},
       URL = {https://doi.org/10.1002/cpa.3160350503},
}

@book {A2,
    AUTHOR = {Majda, A.},
     TITLE = {Compressible {F}luid {F}low and {S}ystems of {C}onservation {L}aws in
              {S}everal {S}pace {V}ariables},
    SERIES = {Applied Mathematical Sciences},
    VOLUME = {53},
 PUBLISHER = {Springer-Verlag, New York},
      YEAR = {1984},
     PAGES = {viii+159},
      ISBN = {0-387-96037-6},
   MRCLASS = {35L65 (76L05 76N10)},
  MRNUMBER = {748308},
MRREVIEWER = {Joel Smoller},
       DOI = {10.1007/978-1-4612-1116-7},
       URL = {https://doi.org/10.1007/978-1-4612-1116-7},
}

@article {VML5,
    AUTHOR = {Lions, P.-L. and Masmoudi, N.},
     TITLE = {Incompressible limit for a viscous compressible fluid},
   JOURNAL = {J. Math. Pures Appl. (9),},
  FJOURNAL = {Journal de Math\'{e}matiques Pures et Appliqu\'{e}es. Neuvi\`eme S\'{e}rie},
    VOLUME = {77},
      YEAR = {1998},
    NUMBER = {6},
     PAGES = {585--627},
      ISSN = {0021-7824},
   MRCLASS = {76N10 (35Q30 76D05)},
  MRNUMBER = {1628173},
MRREVIEWER = {Piotr Biler},
       DOI = {10.1016/S0021-7824(98)80139-6},
       URL = {https://doi.org/10.1016/S0021-7824(98)80139-6},
}

@book {da1996ergodicity,
    AUTHOR = {Da Prato, G. and Zabczyk, J.},
     TITLE = {Ergodicity for {I}nfinite-{D}imensional {S}ystems},
    SERIES = {London Mathematical Society Lecture Note Series},
    VOLUME = {229},
 PUBLISHER = {Cambridge University Press, Cambridge},
      YEAR = {1996},
     PAGES = {xii+339},
      ISBN = {0-521-57900-7},
   MRCLASS = {60H15 (28D05 60J25 60J35)},
  MRNUMBER = {1417491},
MRREVIEWER = {Bohdan\ Maslowski},
       DOI = {10.1017/CBO9780511662829},
       URL = {https://doi.org/10.1017/CBO9780511662829},
}

@article {hairer2006ergodicity,
    AUTHOR = {Hairer, Martin and Mattingly, Jonathan C.},
     TITLE = {Ergodicity of the 2{D} {N}avier-{S}tokes equations with
              degenerate stochastic forcing},
   JOURNAL = {Ann. of Math. (2),},
  FJOURNAL = {Annals of Mathematics. Second Series},
    VOLUME = {164},
      YEAR = {2006},
    NUMBER = {3},
     PAGES = {993--1032},
      ISSN = {0003-486X,1939-8980},
   MRCLASS = {37L55 (35Q30 35R60 60H15 76D05 76M35)},
  MRNUMBER = {2259251},
MRREVIEWER = {Hakima\ Bessaih},
       DOI = {10.4007/annals.2006.164.993},
       URL = {https://doi.org/10.4007/annals.2006.164.993},
}

@article {hairer2008spectral,
    AUTHOR = {Hairer, Martin and Mattingly, Jonathan C.},
     TITLE = {Spectral gaps in {W}asserstein distances and the 2{D}
              stochastic {N}avier-{S}tokes equations},
   JOURNAL = {Ann. Probab.},
  FJOURNAL = {The Annals of Probability},
    VOLUME = {36},
      YEAR = {2008},
    NUMBER = {6},
     PAGES = {2050--2091},
      ISSN = {0091-1798,2168-894X},
   MRCLASS = {35Q35 (35R60 37L55 47D07 60H15 76D05 76M35)},
  MRNUMBER = {2478676},
MRREVIEWER = {Hakima\ Bessaih},
       DOI = {10.1214/08-AOP392},
       URL = {https://doi.org/10.1214/08-AOP392},
}

@article {tornatore2000global,
    AUTHOR = {Tornatore, Elisabetta},
     TITLE = {Global solution of bi-dimensional stochastic equation for a
              viscous gas},
   JOURNAL = {NoDEA Nonlinear Differential Equations Appl.},
  FJOURNAL = {NoDEA. Nonlinear Differential Equations and Applications},
    VOLUME = {7},
      YEAR = {2000},
    NUMBER = {4},
     PAGES = {343--360},
      ISSN = {1021-9722,1420-9004},
   MRCLASS = {76D06 (35R60 60H15)},
  MRNUMBER = {1807944},
MRREVIEWER = {Alp\ O.\ Eden},
       DOI = {10.1007/PL00001429},
       URL = {https://doi.org/10.1007/PL00001429},
}

@article {wang2015global,
    AUTHOR = {Wang, Dehua and Wang, Huaqiao},
     TITLE = {Global existence of martingale solutions to the
              three-dimensional stochastic compressible {N}avier-{S}tokes
              equations},
   JOURNAL = {Differential Integral Equation,},
  FJOURNAL = {Differential and Integral Equations. An International Journal
              for Theory \& Applications},
    VOLUME = {28},
      YEAR = {2015},
    NUMBER = {11--12},
     PAGES = {1105--1154},
      ISSN = {0893-4983},
   MRCLASS = {35R60 (35Q35 60H15 76N10 76W05)},
  MRNUMBER = {3385137},
MRREVIEWER = {Paul\ Andr\'{e}\ Razafimandimby},
       URL = {http://projecteuclid.org/euclid.die/1439901044},
}

@article {feireisl2013compressible,
    AUTHOR = {Feireisl, Eduard and Maslowski, Bohdan and Novotn\'{y},
              Anton\'{\i}n},
     TITLE = {Compressible fluid flows driven by stochastic forcing},
   JOURNAL = {J. Differential Equations,},
  FJOURNAL = {Journal of Differential Equations},
    VOLUME = {254},
      YEAR = {2013},
    NUMBER = {3},
     PAGES = {1342--1358},
      ISSN = {0022-0396,1090-2732},
   MRCLASS = {35Q35 (35A01 35B35 35R60 60H15)},
  MRNUMBER = {2997374},
MRREVIEWER = {Anne-Sophie\ de Suzzoni},
       DOI = {10.1016/j.jde.2012.10.020},
       URL = {https://doi.org/10.1016/j.jde.2012.10.020},
}

@article {breit2016stochastic,
    AUTHOR = {Breit, Dominic and Hofmanov\'{a}, Martina},
     TITLE = {Stochastic {N}avier-{S}tokes equations for compressible
              fluids},
   JOURNAL = {Indiana Univ. Math. J.},
  FJOURNAL = {Indiana University Mathematics Journal},
    VOLUME = {65},
      YEAR = {2016},
    NUMBER = {4},
     PAGES = {1183--1250},
      ISSN = {0022-2518,1943-5258},
   MRCLASS = {35R60 (35Q35 60H15 76M35 76N10)},
  MRNUMBER = {3549199},
MRREVIEWER = {Marko\ Nedeljkov},
       DOI = {10.1512/iumj.2016.65.5832},
       URL = {https://doi.org/10.1512/iumj.2016.65.5832},
}

@article {breit2018local,
    AUTHOR = {Breit, Dominic and Feireisl, Eduard and Hofmanov\'{a},
              Martina},
     TITLE = {Local strong solutions to the stochastic compressible
              {N}avier-{S}tokes system},
   JOURNAL = {Comm. Partial Differential Equations,},
  FJOURNAL = {Communications in Partial Differential Equations},
    VOLUME = {43},
      YEAR = {2018},
    NUMBER = {2},
     PAGES = {313--345},
      ISSN = {0360-5302,1532-4133},
   MRCLASS = {60H15 (35Q30 35R60 76N10)},
  MRNUMBER = {3777877},
MRREVIEWER = {Feng-Yu\ Wang},
       DOI = {10.1080/03605302.2018.1442476},
       URL = {https://doi.org/10.1080/03605302.2018.1442476},
}

@article {smith2017random,
    AUTHOR = {Smith, Scott A.},
     TITLE = {Random perturbations of viscous, compressible fluids{\rm :} global
              existence of weak solutions},
   JOURNAL = {SIAM J. Math. Anal.},
  FJOURNAL = {SIAM Journal on Mathematical Analysis},
    VOLUME = {49},
      YEAR = {2017},
    NUMBER = {6},
     PAGES = {4521--4578},
      ISSN = {0036-1410,1095-7154},
   MRCLASS = {60H15 (76N10)},
  MRNUMBER = {3724251},
       DOI = {10.1137/15M1015340},
       URL = {https://doi.org/10.1137/15M1015340},
}

@article {breit2016incompressible,
    AUTHOR = {Breit, Dominic and Feireisl, Eduard and Hofmanov\'{a},
              Martina},
     TITLE = {Incompressible limit for compressible fluids with stochastic
              forcing},
   JOURNAL = {Arch. Ration. Mech. Anal.},
  FJOURNAL = {Archive for Rational Mechanics and Analysis},
    VOLUME = {222},
      YEAR = {2016},
    NUMBER = {2},
     PAGES = {895--926},
      ISSN = {0003-9527,1432-0673},
   MRCLASS = {60H15 (35Q30 35Q35 35R60 76N10)},
  MRNUMBER = {3544320},
       DOI = {10.1007/s00205-016-1014-y},
       URL = {https://doi.org/10.1007/s00205-016-1014-y},
}

@article {gas2013mesure,
    AUTHOR = {Benseghir, Rym and Yashima, Hisao Fujita},
     TITLE = {Mesure invariante pour l'\'{e}quation stochastique du
              mouvement d'un gaz visqueux en une dimension avec la
              discr\'{e}tisation du domaine},
   JOURNAL = {Rev. Roumaine Math. Pures Appl.},
  FJOURNAL = {Revue Roumaine de Math\'{e}matiques Pures et Appliqu\'{e}es.
              Romanian Journal of Pure and Applied Mathematics},
    VOLUME = {58},
      YEAR = {2013},
    NUMBER = {2},
     PAGES = {149--162},
      ISSN = {0035-3965},
   MRCLASS = {35Q35 (35R60 60H15 76N15)},
  MRNUMBER = {3085719},
}

@article {flandoli1995ergodicity,
    AUTHOR = {Flandoli, Franco and Maslowski, Bohdan},
     TITLE = {Ergodicity of the {$2$}-{D} {N}avier-{S}tokes equation under
              random perturbations},
   JOURNAL = {Commun. Math. Phys.},
  FJOURNAL = {Communications in Mathematical Physics},
    VOLUME = {172},
      YEAR = {1995},
    NUMBER = {1},
     PAGES = {119--141},
      ISSN = {0010-3616,1432-0916},
   MRCLASS = {35R60 (35Q30 60H15 60H30 76D05)},
  MRNUMBER = {1346374},
MRREVIEWER = {Peter\ E.\ Kloeden},
       URL = {http://projecteuclid.org/euclid.cmp/1104273961},
}

@book {kuksin2012mathematics,
    AUTHOR = {Kuksin, Sergei and Shirikyan, Armen},
     TITLE = {Mathematics of {T}wo-{D}imensional {T}urbulence},
    SERIES = {Cambridge Tracts in Mathematics},
    VOLUME = {194},
 PUBLISHER = {Cambridge University Press, Cambridge},
      YEAR = {2012},
     PAGES = {xvi+320},
      ISBN = {978-1-107-02282-9},
   MRCLASS = {76-02 (37L55 60H30 76F02 76M35)},
  MRNUMBER = {3443633},
MRREVIEWER = {Piotr\ Biler},
       DOI = {10.1017/CBO9781139137119},
       URL = {https://doi.org/10.1017/CBO9781139137119},
}

@article {breit2019stationary,
    AUTHOR = {Breit, Dominic and Feireisl, Eduard and Hofmanov\'{a}, Martina
              and Maslowski, Bohdan},
     TITLE = {Stationary solutions to the compressible {N}avier-{S}tokes
              system driven by stochastic forces},
   JOURNAL = {Probab. Theory Related Fields,},
  FJOURNAL = {Probability Theory and Related Fields},
    VOLUME = {174},
      YEAR = {2019},
    NUMBER = {3--4},
     PAGES = {981--1032},
      ISSN = {0178-8051,1432-2064},
   MRCLASS = {60H15 (35Q35 35R60 60H30 76M35 76N10)},
  MRNUMBER = {3980310},
MRREVIEWER = {Chengfeng\ Sun},
       DOI = {10.1007/s00440-018-0875-4},
       URL = {https://doi.org/10.1007/s00440-018-0875-4},
}

@article {li2008vanishing,
    AUTHOR = {Li, Hai-Liang and Li, Jing and Xin, Zhouping},
     TITLE = {Vanishing of vacuum states and blow-up phenomena of the
              compressible {N}avier-{S}tokes equations},
   JOURNAL = {Commun. Math. Phys.},
  FJOURNAL = {Communications in Mathematical Physics},
    VOLUME = {281},
      YEAR = {2008},
    NUMBER = {2},
     PAGES = {401--444},
      ISSN = {0010-3616,1432-0916},
   MRCLASS = {35Q35 (35B10 35B40 76N10)},
  MRNUMBER = {2410901},
MRREVIEWER = {Ana\ L.\ Silvestre},
       DOI = {10.1007/s00220-008-0495-4},
       URL = {https://doi.org/10.1007/s00220-008-0495-4},
}

@article {bresch2003existence,
    AUTHOR = {Bresch, Didier and Desjardins, Beno\^{i}t},
     TITLE = {Existence of global weak solutions for a 2{D} viscous shallow
              water equations and convergence to the quasi-geostrophic
              model},
   JOURNAL = {Commun. Math. Phys.},
  FJOURNAL = {Communications in Mathematical Physics},
    VOLUME = {238},
      YEAR = {2003},
    NUMBER = {1--2},
     PAGES = {211--223},
      ISSN = {0010-3616,1432-0916},
   MRCLASS = {76D03 (35D05 35Q35 76U05 86A05)},
  MRNUMBER = {1989675},
MRREVIEWER = {Nader\ Masmoudi},
       DOI = {10.1007/s00220-003-0859-8},
       URL = {https://doi.org/10.1007/s00220-003-0859-8},
}

@article{bresch2004some,
title = {Quelques modèles diffusifs capillaires de type Korteweg},
journal = {Comptes Rendus Mécanique,},
volume = {332},
number = {11},
pages = {881-886},
year = {2004},
issn = {1631-0721},
doi = {https://doi.org/10.1016/j.crme.2004.07.003},
url = {https://www.sciencedirect.com/science/article/pii/S1631072104001809},
author = {Didier Bresch and Benoît Desjardins},
}

@article {bresch2007existence,
    AUTHOR = {Bresch, Didier and Desjardins, Beno\^{i}t},
     TITLE = {On the existence of global weak solutions to the
              {N}avier-{S}tokes equations for viscous compressible and heat
              conducting fluids},
   JOURNAL = {J. Math. Pures Appl. (9),},
  FJOURNAL = {Journal de Math\'{e}matiques Pures et Appliqu\'{e}es.
              Neuvi\`eme S\'{e}rie},
    VOLUME = {87},
      YEAR = {2007},
    NUMBER = {1},
     PAGES = {57--90},
      ISSN = {0021-7824},
   MRCLASS = {35Q35 (35D05 76N10)},
  MRNUMBER = {2297248},
MRREVIEWER = {Vladimir\ V.\ Shelukhin},
       DOI = {10.1016/j.matpur.2006.11.001},
       URL = {https://doi.org/10.1016/j.matpur.2006.11.001},
}

@article {bresch2003some,
    AUTHOR = {Bresch, Didier and Desjardins, Beno\^{i}t and Lin, Chi-Kun},
     TITLE = {On some compressible fluid models: {K}orteweg, lubrication,
              and shallow water systems},
   JOURNAL = {Comm. Partial Differential Equations},
  FJOURNAL = {Communications in Partial Differential Equations},
    VOLUME = {28},
      YEAR = {2003},
    NUMBER = {3--4},
     PAGES = {843--868},
      ISSN = {0360-5302,1532-4133},
   MRCLASS = {76N10 (35Q35 76D45)},
  MRNUMBER = {1978317},
MRREVIEWER = {Denis\ Serre},
       DOI = {10.1081/PDE-120020499},
       URL = {https://doi.org/10.1081/PDE-120020499},
}

@article {onedns,
    AUTHOR = {Mellet, A. and Vasseur, A.},
     TITLE = {Existence and uniqueness of global strong solutions for
              one-dimensional compressible {N}avier-{S}tokes equations},
   JOURNAL = {SIAM J. Math. Anal.},
  FJOURNAL = {SIAM Journal on Mathematical Analysis},
    VOLUME = {39},
      YEAR = {2007/08},
    NUMBER = {4},
     PAGES = {1344--1365},
      ISSN = {0036-1410,1095-7154},
   MRCLASS = {76N10 (35Q30)},
  MRNUMBER = {2368905},
MRREVIEWER = {Vladimir\ V.\ Shelukhin},
       DOI = {10.1137/060658199},
       URL = {https://doi.org/10.1137/060658199},
}

@article {1d_98,
    AUTHOR = {Tornatore, Elisabetta and Fujita Yashima, Hisao},
     TITLE = {One-dimensional stochastic equations for a viscous barotropic
              gas},
   JOURNAL = {Ricerche Mat.},
  FJOURNAL = {Ricerche di Matematica},
    VOLUME = {46},
      YEAR = {1997},
    NUMBER = {2},
     PAGES = {255--283},
      ISSN = {0035-5038},
   MRCLASS = {35R60 (35Q35 60H15 76M35 76N10)},
  MRNUMBER = {1760377},
MRREVIEWER = {Alberto\ Valli},
}

@article {1d_96,
    AUTHOR = {Tornatore, Elisabetta and Fujita Yashima, Hisao},
     TITLE = {One-dimensional equations of a barotropic viscous gas with a
              not very regular perturbation},
   JOURNAL = {Ann. Univ. Ferrara Sez. VII (N.S.),},
  FJOURNAL = {Annali dell'Universit\`a di Ferrara. Nuova Serie. Sezione VII.
              Scienze Matematiche},
    VOLUME = {40},
      YEAR = {1994},
     PAGES = {137--168},
      ISSN = {0430-3202},
   MRCLASS = {76N15 (35Q35 35R60)},
  MRNUMBER = {1399627},
MRREVIEWER = {\v{S}\'{a}rka\ Matu\v{s}\ocirc{u}-Ne\v{c}asov\'{a}},
}

@article {convergenceMHD,
    AUTHOR = {F\"{o}ldes, Juraj and Friedlander, Susan and Glatt-Holtz,
              Nathan and Richards, Geordie},
     TITLE = {Asymptotic analysis for randomly forced {MHD}},
   JOURNAL = {SIAM J. Math. Anal.},
  FJOURNAL = {SIAM Journal on Mathematical Analysis},
    VOLUME = {49},
      YEAR = {2017},
    NUMBER = {6},
     PAGES = {4440--4469},
      ISSN = {0036-1410,1095-7154},
   MRCLASS = {35Q86 (35B25 35Q35 35R60 60H15)},
  MRNUMBER = {3723323},
MRREVIEWER = {Gabriela\ Planas},
       DOI = {10.1137/16M1071857},
       URL = {https://doi.org/10.1137/16M1071857},
}

@article {BDG,
    AUTHOR = {Davis, Burgess},
     TITLE = {On the {$L\sp{p}$} norms of stochastic integrals and other
              martingales},
   JOURNAL = {Duke Math. J.},
  FJOURNAL = {Duke Mathematical Journal},
    VOLUME = {43},
      YEAR = {1976},
    NUMBER = {4},
     PAGES = {697--704},
      ISSN = {0012-7094,1547-7398},
   MRCLASS = {60G45},
  MRNUMBER = {418219},
MRREVIEWER = {Maurizio\ Pratelli},
       URL = {http://projecteuclid.org/euclid.dmj/1077311944},
}

@incollection {Mattingly02,
    AUTHOR = {Mattingly, Jonathan C.},
     TITLE = {The dissipative scale of the stochastics {N}avier-{S}tokes
              equation: regularization and analyticity},
      NOTE = {Dedicated to David Ruelle and Yasha Sinai on the occasion of
              their 65th birthdays},
   JOURNAL = {J. Statist. Phys.},
  FJOURNAL = {Journal of Statistical Physics},
    VOLUME = {108},
      YEAR = {2002},
    NUMBER = {5-6},
     PAGES = {1157--1179},
      ISSN = {0022-4715,1572-9613},
   MRCLASS = {76D03 (35Q30 35R60 37L55 60H15 76D06 76M35)},
  MRNUMBER = {1933449},
MRREVIEWER = {Benedetta\ Ferrario},
       DOI = {10.1023/A:1019799700126},
       URL = {https://doi.org/10.1023/A:1019799700126},
}

@article {feireisl01,
    AUTHOR = {Feireisl, Eduard and Novotn\'{y}, Anton\'{\i}n and
              Petzeltov\'{a}, Hana},
     TITLE = {On the existence of globally defined weak solutions to the
              {N}avier-{S}tokes equations},
   JOURNAL = {J. Math. Fluid Mech.},
  FJOURNAL = {Journal of Mathematical Fluid Mechanics},
    VOLUME = {3},
      YEAR = {2001},
    NUMBER = {4},
     PAGES = {358--392},
      ISSN = {1422-6928,1422-6952},
   MRCLASS = {35Q30 (76N10)},
  MRNUMBER = {1867887},
MRREVIEWER = {Cun-Zheng\ Wang},
       DOI = {10.1007/PL00000976},
       URL = {https://doi.org/10.1007/PL00000976},
}

@incollection {Flandoli08,
    AUTHOR = {Flandoli, Franco},
     TITLE = {An {i}ntroduction to 3{D} {s}tochastic {f}luid {d}ynamics},
 BOOKTITLE = {S{PDE} in {H}ydrodynamic: {R}ecent {P}rogress and {P}rospects},
    SERIES = {Lecture Notes in Math.},
    VOLUME = {1942},
     PAGES = {51--150},
 PUBLISHER = {Springer, Berlin},
      YEAR = {2008},
      ISBN = {978-3-540-78492-0},
   MRCLASS = {76M35 (35Q35 35R60 60H15 76D03 76D05)},
  MRNUMBER = {2459085},
MRREVIEWER = {Piotr\ Biler},
       DOI = {10.1007/978-3-540-78493-7\_2},
       URL = {https://doi.org/10.1007/978-3-540-78493-7_2},
}

@article {DaPrato03,
    AUTHOR = {Da Prato, Giuseppe and Debussche, Arnaud},
     TITLE = {Ergodicity for the 3{D} stochastic {N}avier-{S}tokes
              equations},
   JOURNAL = {J. Math. Pures Appl. (9),},
  FJOURNAL = {Journal de Math\'{e}matiques Pures et Appliqu\'{e}es.
              Neuvi\`eme S\'{e}rie},
    VOLUME = {82},
      YEAR = {2003},
    NUMBER = {8},
     PAGES = {877--947},
      ISSN = {0021-7824},
   MRCLASS = {60H15 (35Q30 35R60 76D05 76M35)},
  MRNUMBER = {2005200},
MRREVIEWER = {Marek\ Capi\'{n}ski},
       DOI = {10.1016/S0021-7824(03)00025-4},
       URL = {https://doi.org/10.1016/S0021-7824(03)00025-4},
}

@article {FlandoliRomito08,
    AUTHOR = {Flandoli, Franco and Romito, Marco},
     TITLE = {Markov selections for the 3{D} stochastic {N}avier-{S}tokes
              equations},
   JOURNAL = {Probab. Theory Related Fields,},
  FJOURNAL = {Probability Theory and Related Fields},
    VOLUME = {140},
      YEAR = {2008},
    NUMBER = {3--4},
     PAGES = {407--458},
      ISSN = {0178-8051,1432-2064},
   MRCLASS = {76D05 (35Q30 35R60 60H15 60H30 76M35)},
  MRNUMBER = {2365480},
MRREVIEWER = {Hakima\ Bessaih},
       DOI = {10.1007/s00440-007-0069-y},
       URL = {https://doi.org/10.1007/s00440-007-0069-y},
}

@article {markov_selection_3d_comp,
    AUTHOR = {Breit, Dominic and Feireisl, Eduard and Hofmanov\'{a},
              Martina},
     TITLE = {Markov selection for the stochastic compressible
              {N}avier-{S}tokes system},
   JOURNAL = {Ann. Appl. Probab.},
  FJOURNAL = {The Annals of Applied Probability},
    VOLUME = {30},
      YEAR = {2020},
    NUMBER = {6},
     PAGES = {2547--2572},
      ISSN = {1050-5164,2168-8737},
   MRCLASS = {60H15 (35Q30 60H30 76M35 76N06)},
  MRNUMBER = {4187121},
MRREVIEWER = {Xuhui\ Peng},
       DOI = {10.1214/20-AAP1566},
       URL = {https://doi.org/10.1214/20-AAP1566},
}

@article {Hoff99,
    AUTHOR = {Hoff, David and Ziane, Mohammed},
     TITLE = {Compact attractors for the {N}avier-{S}tokes equations of
              one-dimensional, compressible flow},
   JOURNAL = {C. R. Acad. Sci. Paris S\'{e}r. I Math.},
  FJOURNAL = {Comptes Rendus de l'Acad\'{e}mie des Sciences. S\'{e}rie I.
              Math\'{e}matique},
    VOLUME = {328},
      YEAR = {1999},
    NUMBER = {3},
     PAGES = {239--244},
      ISSN = {0764-4442},
   MRCLASS = {35Q30 (34G20 35B40 58F39 76N10)},
  MRNUMBER = {1674555},
       DOI = {10.1016/S0764-4442(99)80128-1},
       URL = {https://doi.org/10.1016/S0764-4442(99)80128-1},
}

@article {Hoff00,
    AUTHOR = {Hoff, David and Ziane, Mohammed},
     TITLE = {The global attractor and finite determining nodes for the
              {N}avier-{S}tokes equations of compressible flow with singular
              initial data},
   JOURNAL = {Indiana Univ. Math. J.},
  FJOURNAL = {Indiana University Mathematics Journal},
    VOLUME = {49},
      YEAR = {2000},
    NUMBER = {3},
     PAGES = {843--889},
      ISSN = {0022-2518,1943-5258},
   MRCLASS = {35B41 (35Q30 37L30)},
  MRNUMBER = {1803214},
MRREVIEWER = {Peter\ E.\ Kloeden},
       DOI = {10.1512/iumj.2000.49.1926},
       URL = {https://doi.org/10.1512/iumj.2000.49.1926},
}

@book {Villani09,
    AUTHOR = {Villani, C\'{e}dric},
     TITLE = {Optimal {T}ransport},
    SERIES = {Grundlehren der mathematischen Wissenschaften [Fundamental
              Principles of Mathematical Sciences]},
    VOLUME = {338},
      NOTE = {Old and new},
 PUBLISHER = {Springer-Verlag, Berlin},
      YEAR = {2009},
     PAGES = {xxii+973},
      ISBN = {978-3-540-71049-3},
   MRCLASS = {49-02 (28A75 37J50 49Q20 53C23 58E30)},
  MRNUMBER = {2459454},
MRREVIEWER = {Dario\ Cordero-Erausquin},
       DOI = {10.1007/978-3-540-71050-9},
       URL = {https://doi.org/10.1007/978-3-540-71050-9},
}

@article {Kanel,
    AUTHOR = {Kanel' , Ja. I.},
     TITLE = {A model system of equations for the one-dimensional motion of
              a gas},
   JOURNAL = {Differencial'nye Uravnenija,},
  FJOURNAL = {Differencial'nye Uravnenija},
    VOLUME = {4},
      YEAR = {1968},
     PAGES = {721--734},
      ISSN = {0374-0641},
   MRCLASS = {35.57 (76.00)},
  MRNUMBER = {227619},
MRREVIEWER = {J.\ P.\ Guiraud},
}
\end{document}